\documentclass{amsart}

\usepackage{amssymb}
\usepackage{amsfonts}
\usepackage{amsmath}
\usepackage{graphicx}
\usepackage{mathrsfs}
\usepackage{dsfont}
\usepackage{amscd}
\usepackage{multirow}
\usepackage[all]{xy}
\normalfont
\usepackage[T1]{fontenc}
\usepackage{calligra}
\usepackage{verbatim}
\usepackage[usenames,dvipsnames]{color}
\usepackage[colorlinks=true,linkcolor=Blue,citecolor=Violet]{hyperref}

\newcommand{\N}{{\mathds{N}}}

\newcommand{\R}{{\mathds{R}}}
\newcommand{\C}{{\mathds{C}}}

\newcommand{\D}{{\mathfrak{D}}}
\newcommand{\A}{{\mathfrak{A}}}
\newcommand{\B}{{\mathfrak{B}}}

\newcommand{\Lip}{{\mathsf{L}}}
\newcommand{\Hilbert}{{\mathscr{H}}}

\newcommand{\dist}{{\mathsf{dist}}}

\newcommand{\propinquity}[1]{{\mathsf{\Lambda}^\ast_{#1}}}


\newcommand{\Kantorovich}[1]{{\mathsf{mk}_{#1}}}

\newcommand{\Haus}[1]{{\mathsf{Haus}_{#1}}}

\newcommand{\StateSpace}{{\mathscr{S}}}

\newcommand{\mongekant}{{Mon\-ge-Kan\-to\-ro\-vich metric}}

\newcommand{\Lqcms}{{\JLL} quantum compact metric space}
\newcommand{\gQqcms}{quasi-Leibniz quantum compact metric space}
\newcommand{\Qqcms}[1]{${#1}$--quasi-Leibniz quantum compact metric space}

\newcommand{\unit}{1}

\newcommand{\sa}[1]{{\mathfrak{sa}\left({#1}\right)}}

\newcommand{\JLL}{Lei\-bniz}

\newcommand{\dom}[1]{{\operatorname*{dom}({#1})}}
\newcommand{\diam}[2]{{\mathrm{diam}\left({#1},{#2}\right)}}
\newcommand{\covn}[3]{{\mathrm{cov}_{{#1}}\left({#2}\middle\vert{#3}\right)}}

\newcommand{\tunnelset}[3]{{\text{\calligra Tunnels}\,\left[\left({#1}\right)\stackrel{#3}{\longrightarrow}\left({#2}\right)\right]}}

\newcommand{\bridgelength}[2]{{\lambda\left({#1}\middle|{#2}\right)}}

\newcommand{\Jordan}[2]{{{#1}\circ{#2}}} 
\newcommand{\Lie}[2]{{\left\{{#1},{#2}\right\}}} 

\newcommand{\targetsettunnel}[3]{{\mathfrak{t}_{#1}\left({#2}\middle\vert{#3}\right)}}

\newcommand{\tunneldepth}[1]{{\delta\left(#1\right)}}
\newcommand{\tunnelreach}[1]{{\rho\left(#1\right)}}
\newcommand{\tunnellength}[1]{{\lambda\left(#1\right)}}

\newcommand{\tunnelextent}[1]{{\chi\left({#1}\right)}}

\newcommand{\co}[1]{{\overline{\mathrm{co}}\left(#1\right)}}
\newcommand{\alg}[1]{{\mathfrak{#1}}}

\theoremstyle{plain}
\newtheorem{theorem}{Theorem}[section]

\newtheorem{step}{Step}

\newtheorem{corollary}[theorem]{Corollary}

\newtheorem{lemma}[theorem]{Lemma}
\newtheorem{proposition}[theorem]{Proposition}

\newtheorem{theorem-definition}[theorem]{Theorem-Definition}

\theoremstyle{definition}
\newtheorem{definition}[theorem]{Definition}

\newtheorem{notation}[theorem]{Notation}

\newtheorem{convention}[theorem]{Convention}

\theoremstyle{remark}

\newtheorem{remark}[theorem]{Remark}

\renewcommand{\geq}{\geqslant}
\renewcommand{\leq}{\leqslant}


\numberwithin{equation}{section}

\hyphenation{Gro-mov}
\hyphenation{Haus-dorff}
\hyphenation{pro-pin-qui-ty}

\begin{document}

\title{A Compactness Theorem for The Dual Gromov-Hausdorff Propinquity}
\author{Fr\'{e}d\'{e}ric Latr\'{e}moli\`{e}re}
\email{frederic@math.du.edu}
\urladdr{http://www.math.du.edu/\symbol{126}frederic}
\address{Department of Mathematics \\ University of Denver \\ Denver CO 80208}

\date{\today}
\subjclass[2000]{Primary:  46L89, 46L30, 58B34.}
\keywords{Noncommutative metric geometry, Gromov-Hausdorff convergence, Monge-Kantorovich distance, Quantum Metric Spaces, Lip-norms}

\begin{abstract}
We prove a compactness theorem for the dual Gromov-Hausdorff propinquity as a noncommutative analogue of the Gromov compactness theorem for the {Gro\-mov-\-Haus\-dorff} distance. Our theorem is valid for subclasses of {\gQqcms s} of the closure of finite dimensional {\gQqcms s} for the dual propinquity. While finding characterizations of this class proves delicate, we show that all nuclear, quasi-diagonal {\gQqcms s} are limits of finite dimensional {\gQqcms s}. This result involves a mild extension of the definition of the dual propinquity to {\gQqcms s}, which is presented in the first part of this paper.
\end{abstract}
\maketitle

\tableofcontents


\section{Introduction}

The dual Gromov-Hausdorff propinquity \cite{Latremoliere13b,Latremoliere13c,Latremoliere14} is an analogue of the {Gro\-mov-\-Haus\-dorff} distance \cite{Gromov81} defined on a class of noncommutative algebras, called {\Lqcms s}, seen as a generalization of the algebras of Lipschitz  functions over metric spaces. The dual propinquity is designed to provide a framework to extend techniques from metric geometry \cite{Gromov} to noncommutative geometry \cite{Connes}. In this paper, we prove a generalization of Gromov's compactness theorem to the dual propinquity, and study the related issue of finite dimensional approximations for {\Lqcms s}. As a consequence of our study, we extend the dual Gromov-Hausdorff propinquity to various classes of {\gQqcms s}, which are generalized forms of noncommutative Lipschitz algebras. Indeed, we prove that nuclear quasi-diagonal {\Lqcms s} are limits of finite dimensional {\gQqcms s}, yet we do not know whether they are limits of finite dimensional {\Lqcms s}.

An important property of the Gromov-Hausdorff distance is a characterization of compact classes of compact metric spaces \cite{Gromov81}:

\begin{theorem}[Gromov's Compactness Theorem]\label{Gromov-Compactness-thm}
A class $\mathcal{S}$ of compact metric spaces is totally bounded for the Gromov-Hausdorff distance if, and only if the following two assertions hold:
\begin{enumerate}
\item there exists $D\geq 0$ such that for all $(X,\mathrm{m}) \in \mathcal{S}$, the diameter of $(X,\mathrm{m})$ is less or equal to $D$,
\item there exists a function $G: (0,\infty)\rightarrow \N$ such that for every $(X,\mathrm{m}) \in \mathcal{S}$, and for every $\varepsilon > 0$, the smallest number $\mathrm{Cov}_{(X,\mathrm{m})}(\varepsilon)$ of balls of radius $\varepsilon$ needed to cover $(X,\mathrm{m})$ is no more than $G(\varepsilon)$. 
\end{enumerate}
Since the Gromov-Hausdorff distance is complete, a class of compact metric spaces is compact for the Gromov-Hausdorff distance if and only if it is closed and totally bounded.
\end{theorem}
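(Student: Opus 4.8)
The plan is to prove the two implications separately and then read off the final equivalence from completeness together with the standard metric fact that, in a complete metric space, compactness is equivalent to being closed and totally bounded. Throughout I will use the description of the Gromov-Hausdorff distance $d_{\gh}$ via correspondences: two compact metric spaces are within $\eta$ of each other exactly when there is a relation $R$ between them whose points pair up with metric distortion at most $2\eta$. This formulation makes both of the required estimates essentially mechanical.

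For the necessity direction, suppose $\mathcal{S}$ is totally bounded, fix $\varepsilon > 0$, and choose a finite $\varepsilon$-net $\{X_1,\dots,X_n\} \subseteq \mathcal{S}$. First I would record the Lipschitz-type estimate that $d_{\gh}(X,Y) < \eta$ forces $|\mathrm{diam}(X) - \mathrm{diam}(Y)| \leq 2\eta$, which is immediate from a low-distortion correspondence. Taking $D = \max_j \mathrm{diam}(X_j) + 2\varepsilon$ then yields assertion (1). Next I would establish the companion estimate for covering numbers: if $d_{\gh}(X,Y) < \eta$, then pulling an optimal $\varepsilon'$-net of $Y$ back across a correspondence of distortion below $2\eta$ produces an $(\varepsilon' + 2\eta)$-net of $X$ of the same cardinality, so $\mathrm{Cov}_X(\varepsilon' + 2\eta) \leq \mathrm{Cov}_Y(\varepsilon')$. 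For a fixed target scale $\varepsilon$, choosing the net of $\mathcal{S}$ at resolution $\eta = \tfrac{\varepsilon}{4}$ and setting $G(\varepsilon) = \max_j \mathrm{Cov}_{X_j}(\tfrac{\varepsilon}{2})$ gives assertion (2), since every $X \in \mathcal{S}$ lies within $\tfrac{\varepsilon}{4}$ of some $X_j$ and hence $\mathrm{Cov}_X(\varepsilon) \leq \mathrm{Cov}_{X_j}(\tfrac{\varepsilon}{2})$.

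For the sufficiency direction, which I expect to be the crux, the strategy is to approximate every member of $\mathcal{S}$ by a finite metric space drawn from a single compact pool. Fix $\varepsilon > 0$. By assertion (2), each $X \in \mathcal{S}$ carries an $\varepsilon$-net $N_X$ with $|N_X| \leq G(\varepsilon) =: k$, and since $N_X$ is $\varepsilon$-dense we have $d_{\gh}(X,N_X) \leq \varepsilon$; by assertion (1) the finite space $N_X$ has diameter at most $D$. The key observation is that the collection of all metric spaces on at most $k$ points with all distances in $[0,D]$ is itself totally bounded for $d_{\gh}$: each such space is encoded by a symmetric $k \times k$ distance matrix with entries in the compact set $[0,D]$ subject to the triangle inequalities, and two spaces whose matrices agree up to $\delta$ in the sup norm after an optimal relabeling satisfy $d_{\gh} \leq \delta$. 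Selecting finitely many model spaces $Y_1,\dots,Y_m$ that form an $\varepsilon$-net of this pool and invoking the triangle inequality gives $d_{\gh}(X,Y_j) \leq d_{\gh}(X,N_X) + d_{\gh}(N_X,Y_j) \leq 2\varepsilon$ for a suitable $j$, so $\{Y_1,\dots,Y_m\}$ is a $2\varepsilon$-net for $\mathcal{S}$; as $\varepsilon$ was arbitrary, $\mathcal{S}$ is totally bounded.

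The main obstacle is precisely the passage from the individually chosen nets $N_X$ to a genuinely finite family of models: one must verify that $d_{\gh}$ restricted to bounded finite metric spaces is dominated by the sup-distance of distance matrices up to relabeling, and that the parameter space of admissible matrices is compact in $\R^{k \times k}$. Once this finite-dimensional compactness is secured, the remainder is bookkeeping with the triangle inequality and the elementary comparison estimates above. Finally, the asserted equivalence between compactness and being closed and totally bounded is the general fact valid in any complete metric space, applied here to $d_{\gh}$, whose completeness is known; this requires no further argument.
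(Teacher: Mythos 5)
The paper does not contain a proof of this statement: it is quoted as Gromov's classical compactness theorem and attributed to \cite{Gromov81} (a textbook treatment is in \cite{burago01}), so there is no internal argument to compare against. Your proposal is the standard proof of that classical result --- distortion estimates across low-distortion correspondences give the uniform diameter and covering bounds in the necessity direction, and in the sufficiency direction one approximates each space by an $\varepsilon$-net of cardinality at most $G(\varepsilon)$ and then uses compactness of the set of symmetric $k\times k$ distance matrices with entries in $[0,D]$ --- and it is correct; the details you leave implicit (padding nets to exactly $k$ points via pseudometrics and passing to metric quotients, and the factor $\tfrac{1}{2}$ in the correspondence formulation of $\gh$, which only improves your estimates) are routine.
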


Condition (2) in Theorem (\ref{Gromov-Compactness-thm}) is meaningful since any compact metric space is totally bounded, which is also equivalent to the statement that compact metric spaces can be approximated by their finite subsets for the Hausdorff distance. Thus, intimately related to the question of extending Theorem (\ref{Gromov-Compactness-thm}) to the dual propinquity, is the question of finite dimensional approximations of {\Lqcms s}. However, this latter question proves delicate. In order to explore this issue, we are led to work within larger classes of generalized Lipschitz algebras, which we name the {\gQqcms s}. The advantage of these classes is that it is possible to extend to them the dual propinquity, and then prove that a large class of {\Lqcms s} are limits of finite dimensional {\gQqcms s} for the dual propinquity. In particular, the closure of many classes of finite dimensional {\gQqcms s} contain all \emph{nuclear quasi-diagonal} {\Lqcms s}. The various possible classes of {\gQqcms s} we introduce represent various degrees of departure from the original Leibniz inequality. 

The dual propinquity is our answer to the challenge raised by the quest for a well-behaved analogue of the Gromov-Hausdorff distance designed to work within the C*-algebraic framework in noncommutative metric geometry \cite{Connes89, Connes, Rieffel98a, Rieffel99, Rieffel00, Rieffel01, Rieffel02, Rieffel05, Rieffel06, Rieffel08, Rieffel09, Rieffel10, Rieffel10c, Rieffel11, Rieffel12, Rieffel14}. Recent research in noncommutative metric geometry suggests, in particular, that one needs a strong tie between the quantum topological structure, provided by a C*-algebra, and the quantum metric structure, if one wishes to study the behavior of C*-algebraic related structures such as projective modules under metric convergence. The quantum metric structure over a C*-algebra $\A$ is given by a seminorm $\Lip$ defined on a dense subspace $\dom{\Lip}$ of the self-adjoint part of $\A$ such that:
\begin{enumerate}
\item $\{a\in\dom{\Lip} : \Lip(a) = 0 \} = \R \unit_\A$,
\item the distance on the state space $\StateSpace(\A)$ of $\A$ defined, for any two $\varphi,\psi\in\StateSpace(\A)$ by:
\begin{equation}\label{mongekant-eq}
\Kantorovich{\Lip}(\varphi,\psi) = \sup\left\{ |\varphi(a) - \psi(a)| : a\in\dom{\Lip}\text{ and }\Lip(a)\leq 1 \right\}
\end{equation}
metrizes the weak* topology on $\StateSpace(\A)$.
\end{enumerate}
Such a seminorm is called a Lip-norm, and the metric defined by Expression (\ref{mongekant-eq}) is called the {\mongekant}, by analogy with the classical picture \cite{Kantorovich40,Kantorovich58}. In particular, it appears that Lip-norms should satisfy a form of the Leibniz inequality. In \cite{Latremoliere13} and onward, we thus connected quantum topology and quantum metric by adding to Lip-norms $\Lip$ on unital C*-algebras $\A$ the requirement that:
\begin{equation}\label{Leibniz-eq1}
\Lip\left(\frac{ab+ba}{2}\right) \leq \Lip(a)\|b\|_\A + \Lip(b)\|a\|_\A
\end{equation}
and
\begin{equation}\label{Leibniz-eq2}
\Lip\left(\frac{ab-ba}{2i}\right) \leq \Lip(a)\|b\|_\A + \Lip(b)\|a\|_\A\text{,}
\end{equation}
where $\|\cdot\|_\A$ is the norm of the underlying C*-algebra $\A$.

Yet, the quantum Gromov-Hausdorff distance, introduced by Rieffel \cite{Rieffel00} as the first noncommutative analogue of the Gromov-Hausdorff distance, did not capture the C*-algebraic structure, as illustrated with the fact that the distance between two non-isomorphic C*-algebras could be null. In order to strengthen Rieffel's distance to make *-isomorphism necessary, one may consider one of at least two general approaches. 

A first idea is to modify the quantum Gromov-Hausdorff distance so that it captures some quantum topological aspects of the underlying quantum metric spaces, while not tying the metric and topological structure together. For instance, Kerr's distance \cite{kerr02} replaces the state space in Expression (\ref{mongekant-eq}) with spaces of unital completely positive matrix-valued linear maps, which capture some additional topological information, while the notion of Lip-norm, i.e. metric structure, is essentially unchanged and in particular, does not involve the Leibniz properties (\ref{Leibniz-eq1}), (\ref{Leibniz-eq2}). This first approach is shared, in some fashion or another, by all early attempts to fix the weakness of the coincidence axiom \cite{kerr02,li03,li06,kerr09}. However, Kerr introduced in \cite{kerr02} a weakened notion of the Leibniz property, called the $F$-Leibniz property, motivated by the nontrivial question of the completeness of his matricial distance. Specifically, when applying Kerr's construction specifically to a class of $F$-Leibniz quantum compact metric spaces, Kerr proved in \cite[Theorem 5.3]{kerr02} that the distance thus obtained (which is not the matricial distance, but rather the so-called $F$-Leibniz complete distance \cite[Section 5]{kerr02}) is complete. The original matricial distance of Kerr is not complete in general, as seen in \cite{Guido06}. Thus, the F-Leibniz property is a natural requirement, as far as Kerr's distance is concerned, and as well for our own dual propinquity, as we shall see in this paper.

Yet, as with Rieffel's construction, Kerr's distance is not known to satisfy the triangle inequality once restricted to a specific class of $F$-Leibniz compact quantum metric spaces. The dual propinquity was designed precisely to handle this situation.

A second approach is to tie together the metric and topological structure of quantum metric spaces before attempting to define a Gromov-Hausdorff distance. This approach involves working on a more restrictive category of quantum compact metric spaces. One then realizes that quite a few challenges arise when trying to define an analogue of the Gromov-Hausdorff distance. These challenges owe to the fact that the definition of an analogue of the Gromov-Hausdorff distance involves a form of embedding of two compact quantum metric spaces into some other space, and various properties of this analogue, such as the triangle inequality, become harder to establish when one puts strong constraints on the possible embedding. The Leibniz property of Expressions (\ref{Leibniz-eq1}) and (\ref{Leibniz-eq2}) are examples of such strong constraints. First steps in this direction can be found in Rieffel's quantum proximity \cite{Rieffel10c}, and the restriction of Kerr's distance to $F$-Leibniz compact quantum metric spaces \cite{kerr02}, which are not known to even be a pseudo-metric, as the proof of the triangle inequality is elusive.

The dual propinquity follows the second approach, and owes its name to Rieffel's proximity. Its construction answered the rather long-standing challenge to employ the second approach described above toward a successful conclusion. The dual propinquity is defined on {\Lqcms s}, which are compact quantum metric spaces whose Lip-norms are defined on dense Jordan-Lie sub-algebras of the self-adjoint part of the underlying C*-algebras, and which satisfy the Leibniz inequalities (\ref{Leibniz-eq1}) and (\ref{Leibniz-eq2}). All the main examples of compact quantum metric spaces are in fact {\Lqcms s} \cite{Rieffel00,Rieffel01,Rieffel02,Ozawa05,li05}. Now, the dual propinquity is a complete metric on the class of {\Lqcms s}: in particular, distance zero implies *-isomorphism, in addition to isometry of quantum metric structures, and the triangle inequality is satisfied. Several examples of convergence for the dual propinquity are known \cite{Latremoliere05,Latremoliere13b,Rieffel11}.

Moreover, several stronger ties between quantum metric and quantum topology have been proposed, most notably Rieffel's strong Leibniz property and Rieffel's compact C*-metric spaces \cite{Rieffel10c}, both of which are special cases of {\Lqcms s}. The dual propinquity can be specialized to these classes, in the sense that its construction may, if desired, only involve quantum metric spaces in these classes. We also note that the notion of {\Lqcms} can be extended to the framework of quantum locally compact metric spaces \cite{Latremoliere05b,Latremoliere12b,Latremoliere14b}. 

The problem of determining which {\Lqcms s} is a limit of finite dimensional {\Lqcms s} for the dual propinquity, however, challenges us in this paper to explore a somewhat relaxed form of the Leibniz inequality, while keeping, informally, the same tie between quantum topology and quantum metric structure. Indeed, while any compact quantum metric space is within an arbitrarily small quantum Gromov-Hausdorff distance to some finite dimensional quantum metric space \cite[Theorem 13.1]{Rieffel00}, Rieffel's construction of these finite dimensional approximations does not produce finite dimensional C*-algebras nor Leibniz Lip-norms. In fact, the construction of finite dimensional approximations for {\Lqcms s} using the dual propinquity remains elusive in general. If $(\A,\Lip)$ is a {\Lqcms s}, we seek a sequence $(\B_n,\Lip_n)_{n\in\N}$ of finite dimensional {\Lqcms s} which converge to $(\A,\Lip)$ for the dual propinquity. The first question is: what is the source of the C*-algebras $\B_n$ ?

A natural approach to the study of this problem is to first seek C*-algebras which naturally come with finite dimensional C*-algebra approximations in a quantum topological sense. Nuclearity and quasi-diagonality, for instance, provide such approximations. The next natural question becomes: how to equip the finite dimensional topological approximations of some C*-algebra $\A$ with quantum metric structures, given a Lip-norm on $\A$? Our effort led us to an answer in this paper, if we allow a bit of flexibility. When working with a nuclear quasi-diagonal {\Lqcms} $(\A,\Lip)$, we can equip finite dimensional approximations with Lip-norms which are not necessarily Leibniz, but satisfy a slightly generalized form of the Leibniz identity, in such a way as to obtain a metric approximation for the dual propinquity. This weakened form of the Leibniz property is referred to as the quasi-Leibniz property. Informally, one may require that the deficiency in the Leibniz property for approximations of nuclear quasi-diagonal {\Lqcms s} be arbitrarily small, though not null.

We emphasize that a crucial difficulty which we address in our work regards the Leibniz, and more generally, quasi-Leibniz property of the Lip-norms. Without a specific interest in the Leibniz property, one may obtain finite dimensional approximations for various classes of compact quantum metric spaces, typically under some finite dimensional topological approximation properties such as nuclearity, exactness, and others, as seen for instance in \cite[Proposition 3.9, 3.10, 3.11]{kerr02}, \cite[Theorem 5.3, Theorem 5.8]{kerr09}. The challenge, however, is to work with a \emph{metric} on classes of \emph{Leibniz} or \emph{quasi-Leibniz} compact metric spaces for which such finite dimensional approximations are valid, meaning in part that the Lip-norms on the finite dimensional approximating spaces must also satisfy a form of Leibniz identity. These matters turn out to add quite a bit of complexity to the problem.

Our paper thus begins with the extension of the dual Gromov-Hausdorff propinquity to new classes of quantum compact metric spaces which we call {\gQqcms s}. We prove, using our techniques developed in \cite{Latremoliere13c,Latremoliere14}, that the extended propinquity retains all of its basic properties: it is a complete metric on various classes of {\gQqcms}. An important aspect of this construction is that one first picks a class of {\gQqcms s}, and then within this class, the dual propinquity gives a metric --- while involving only spaces from one's chosen class. In practice, the purpose of this construction is to allow one to chose, informally, the least general class of {\gQqcms s}, and ideally even the class of {\Lqcms s}, since this would imply that all the intermediate spaces involved in the computation of the dual propinquity would possess as strong a Leibniz property as possible, and thus allow for more amenable computations. 

 We then prove our compactness theorem for {\gQqcms s} in the next section of our paper. We take advantage of our extension of the dual propinquity to prove our theorem for a large collection of {\gQqcms s}. This theorem includes a statement about the original class of {\Lqcms s}. The proof of our theorem is split in two main steps: we first investigate the compactness of some classes of finite dimensional {\gQqcms s}. We then prove our compactness theorem for classes of {\gQqcms s} which are within the closure of certain classes of finite dimensional {\gQqcms s}.

We then conclude our paper with the proof that nuclear-quasi-diagonal {\Lqcms s}  (and even many nuclear quasi-diagonal {\gQqcms s}) are within the closure of many classes of \emph{finite dimensional} {\gQqcms s}, to which our compactness theorem would thus apply.

\section{Quasi-Leibniz quantum compact metric spaces and the Dual Gromov-Hausdorff Propinquity}

The framework for our paper is a class of compact quantum metric spaces constructed over C*-algebras and whose Lip-norms are well-behaved with respect to the multiplication. The desirable setup is to require the Leibniz property \cite{Rieffel09, Rieffel10c, Latremoliere13,Latremoliere13c,Latremoliere14}. Yet, as we shall see in the second half of this work, the Leibniz property is difficult to obtain for certain finite-dimensional approximations. We are thus led to a more flexible framework, although we purposefully wish to stay, informally, close to the original Leibniz property, while accommodating the constructions of finite dimensional approximations in the last section of our paper, and potential future examples. It could also be noted that certain constructions in noncommutative geometry, such as twisted spectral triples \cite{Connes08}, would lead to seminorms which are not satisfying the Leibniz inequality, yet would fit within our new framework. With this in mind, we propose the following as the basic objects of our study:

\begin{notation}
The norm of any normed vector space $X$ is denoted by $\|\cdot\|_X$.
\end{notation}

\begin{notation}
Let $\A$ be a unital C*-algebra. The unit of $\A$ is denoted by $\unit_\A$. The subspace of the self-adjoint elements in $\A$ is denoted by $\sa{\A}$. The state space of $\A$ is denoted by $\StateSpace(\A)$.
\end{notation}

\begin{notation}
Let $\A$ be a C*-algebra. The \emph{Jordan product} of $a,b \in \sa{\A}$ is the element $\Jordan{a}{b} = \frac{1}{2}\left(ab + ba\right)$ and the \emph{Lie product} of $a,b \in \sa{\A}$ is the element $\Lie{a}{b} = \frac{1}{2i}\left(ab - ba\right)$.
\end{notation}

Our generalization of the Leibniz property will rely on:
\begin{definition}\label{permissible-def}
We endow $[0,\infty)^4$ with the following order:
\begin{multline*}
\forall x=(x_1,x_2,x_3,x_4), y=(y_1,y_2,y_3,y_4) \\
x\preccurlyeq y \iff \left(\forall j\in\{1,\ldots,4\} \quad x_j\leq y_j\right)\text{.}
\end{multline*}
A function $F : [0,\infty)^4\rightarrow [0,\infty)$ is \emph{permissible} when:
\begin{enumerate}
\item $F$ is nondecreasing from $([0,\infty)^4,\preccurlyeq)$ to $([0,\infty),\leq)$,
\item for all $x,y,l_x,l_y \in [0,\infty)$ we have:
\begin{equation}\label{permissible-eq}
x l_y + y l_x \leq F(x,y,l_x,l_y)\text{.}
\end{equation}
\end{enumerate}
\end{definition}

\begin{definition}\label{QLeibniz-def}
Let $\A$ be a C*-algebra and $F$ be a permissible function. A seminorm $\Lip$ defined on a Jordan-Lie sub-algebra $\dom{\Lip}$ of $\sa{\A}$ is \emph{$F$-quasi-Leibniz} when, for all $a,b \in \dom{\Lip}$:
\begin{align}\label{qLeibniz-def}
\Lip\left(\Jordan{a}{b}\right) &\leq F(\|a\|_\A, \|b\|_\A, \Lip(a), \Lip(b)) \\
\intertext{and}
\Lip\left(\Lie{a}{b}\right) &\leq F(\|a\|_\A,\|b\|_\A,\Lip(a),\Lip(b)) \text{.}
\end{align}
\end{definition}

The order on $[0,\infty)^4$ used in Definition (\ref{permissible-def}) was used in \cite{kerr02} to define $F$-Leibniz seminorms. Thus Definition (\ref{QLeibniz-def}) is a special case of the definition of $F$-Leibniz seminorms: the key difference is that our notion of permissibility for a function $F$ also guarantees that the maximum of an $F$-quasi-Leibniz seminorm and a Leibniz seminorm is again an $F$-quasi-Leibniz seminorm, i.e. the function $F$ is not modified by this construction. This will prove essential in our core theorems for this section: Theorem (\ref{tunnel-composition-thm}), which is the basis for the proof that the dual propinquity satisfies the triangle inequality, and Theorem (\ref{prop-thm}), which shows that the extension of the dual propinquity to {\gQqcms s} satisfies all the desired properties.

The notion of an $F$-quasi-Leibniz seminorm includes various examples of seminorms from differential calculi \cite{Blackadar91} or from twisted spectral triples with non-isometric twists.

We are now ready to define the main objects for our work:

\begin{definition}\label{Qqcms-def}
A pair $(\A,\Lip)$ of a unital C*-algebra $\A$  and a seminorm $\Lip$ defined on a dense Jordan-Lie subalgebra $\dom{\Lip}$ of $\sa{\A}$ is a \emph{\Qqcms{F}} for some permissible function $F$ when:
\begin{enumerate}
\item $\left\{a \in \dom{\Lip} : \Lip(a) = 0 \right\} = \R\unit_\A$,
\item $\{a\in\sa{\A}:\Lip(a)\leq 1\}$ is closed in $\|\cdot\|_\A$,
\item the \emph{\mongekant} on $\StateSpace(\A)$, defined for any two states $\varphi,\psi \in \StateSpace(\A)$ by:
\begin{equation}\label{MongeKant-def}
\Kantorovich{\Lip}(\varphi, \psi) = \sup \left\{ |\varphi(a) - \psi(a)| : a\in\dom{\Lip}\text{ and }\Lip(a) \leq 1 \right\}\text{,}
\end{equation}
induces the weak* topology on $\StateSpace(\A)$,
\item $\Lip$ is $F$-quasi-Leibniz on $\dom{\Lip}$.
\end{enumerate}
The seminorm $\Lip$ of a {\Qqcms{F}} is called an \emph{$F$-quasi-Leibniz Lip-norm}.
\end{definition}

The following two special cases of {\gQqcms} will be central to our current work. Our original construction of the dual propinquity held for {\Lqcms s}:

\begin{definition}
A \emph{\Lqcms} $(\A,\Lip)$ is a {\Qqcms{L}} where the function $L$ is given by:
\begin{equation*}
L : (x,y,l_x,l_y) \in [0,\infty)^4 \longmapsto x l_y + y l_x \text{.}
\end{equation*}
\end{definition}

Our main motivation in extending the domain of the propinquity is to accommodate the following type of {\gQqcms s}, as they will be the type of quantum compact metric spaces which we will construct as finite dimensional approximations to nuclear quasi-diagonal {\Lqcms s}:

\begin{notation}\label{CDQqcms-notation}
A \emph{{\Qqcms{(C,D)}}}, for some $C \geq 1$ and $D \geq 0$, is a {\Qqcms{F_{C,D}}} where the function $F_{C,D}$ is defined by:
\begin{equation*}
F_{C,D} : x,y,l_x,l_y \in [0,\infty)^4 \longmapsto C(x l_y + y l_x) + D l_x l_y \text{.}
\end{equation*}
\end{notation}

Thus, if $(\A,\Lip)$ is a {\Qqcms{(C,D)}}, then for all $a,b\in\dom{\Lip}$, we have:
\begin{equation*}
\Lip\left(\Jordan{a}{b}\right) \leq C\left(\Lip(a)\|b\|_\A + \|a\|_\A \Lip(b)\right) + D \Lip(a)\Lip(b)
\end{equation*}
and
\begin{equation*}
\Lip\left(\Lie{a}{b}\right) \leq C\left(\Lip(a)\|b\|_\A + \|a\|_\A \Lip(b)\right) + D \Lip(a)\Lip(b)\text{,}
\end{equation*}
and {\Lqcms s} are exactly the {\Qqcms{(1,0)}s}.

We note that a pair $(\A,\Lip)$ with $\Lip$ a seminorm defined on a dense subspace of $\sa{\A}$ and satisfying Assertion (1) of Definition (\ref{Qqcms-def}) is a Lipschitz pair, and a \emph{Lipschitz pair} which satisfies Assertion (3) of Definition (\ref{Qqcms-def}) is a \emph{compact quantum metric space}, while $\Lip$ is then known as a Lip-norm \cite{Rieffel98a, Rieffel99}.

We use the following convention in this paper:
\begin{convention}
If $\Lip$ is a seminorm defined on a dense subset $\dom{\Lip}$ of some vector space $X$ then we extend $\Lip$ to $X$ by setting $\Lip(a) = \infty$ for all $a\in X\setminus\dom{\Lip}$, and we check that if $\Lip$ is closed, i.e $\{a\in\sa{\A}:\Lip(a)\leq 1\}$ is closed, then its extension is lower-semicontinuous with respect to the norm. We adopt the convention that $0\cdot\infty = 0$ when working with seminorms. Moreover, if $F$ is a permissible function, then we implicitly extend $F$ to a function from $[0,\infty]^4$ to $[0,\infty]$ by setting $F(x,y,l_x,l_y) = \infty$ if $\infty \in \{x,y,l_x,l_y\}$ whenever necessary.
\end{convention}

We remark that if $(\A,\Lip)$ is a {\Qqcms{(C,D)}} then, since $\Lip(\unit_\A) = 0$ and thus $\Lip(a) = \Lip(a\unit_\A)\leq C\Lip(a)$ for all $a\in\sa{\A}$, we must have $C\geq 1$, as given in Notation (\ref{CDQqcms-notation}). Of course, $C\geq 1$ and $D\geq 0$ are necessary for $F_{C,D}$ to be permissible as well; we see that in this case this condition does not remove any generality.

\begin{remark}
Of course, if $(\A,\Lip)$ is a {\Qqcms{(C,D)}} for some $C\geq 1$, $D\geq 0$, then it is a {\Qqcms{(C',D')}} for any $C'\geq C$ and $D'\geq D$. More generally, if $G\geq F$ are two nondecreasing nonnegative functions over $[0,\infty)^4$, and if $(\A,\Lip)$ is a {\Qqcms{F}}, then $(\A,\Lip)$ is a {\Qqcms{G}} as well.
\end{remark}

We make a simple remark, which will prove useful when proving the triangle inequality for our extension of the dual propinquity:
\begin{remark}\label{max-qlip-norms-rmk}
If $\Lip_1$ and $\Lip_2$ are, respectively, $F_1$ and $F_2$-quasi-Leibniz Lip-norms on some unital C*-algebra $\A$, then $\max\{\Lip_1,\Lip_2\}$ is a $\max\{F_1,F_2\}$-quasi-Leibniz Lip-norm on $\A$.

In particular, if $\Lip_1$ and $\Lip_2$ are respectively $(C_1,D_1)$ and $(C_2,D_2)$-quasi-Leibniz Lip-norms on some C*-algebra $\A$, then $\max\{\Lip_1,\Lip_2\}$ is a \begin{equation*}\left(\max\{C_1,C_2\},\max\{D_1,D_2\}\right)\end{equation*} quasi-Leibniz Lip-norm on $\A$.
\end{remark}

A morphism $\pi : (\A,\Lip_\A)\rightarrow (\B,\Lip_\B)$ is easily defined as a unital *-morphism from $\A$ to $\B$ whose dual map, restricted to the state space $\StateSpace(\B)$, is a Lipschitz map from $(\StateSpace(\B),\Kantorovich{\Lip_\B})$ to $(\StateSpace(\A),\Kantorovich{\Lip_\A})$. In this manner, {\gQqcms s} form a category. The notion of isomorphism in this category is the noncommutative generalization of bi-Lipschitz maps. However, as is usual with metric spaces, a more constrained notion of isomorphism is given by isometries.

\begin{definition}
An isometric isomorphsism $\pi : (\A,\Lip_\A)\rightarrow (\B,\Lip_\B)$ between two {\gQqcms s} $(\A,\Lip_\A)$ and $(\B,\Lip_\B)$ is a unital *-isomorphism from $\A$ to $\B$ whose dual map is an isometry from $(\StateSpace(\B),\Kantorovich{\Lip_\B})$ onto $(\StateSpace(\A),\Kantorovich{\Lip_\A})$.
\end{definition}

The assumption that Lip-norms are lower semi-continuous in Definition (\ref{Qqcms-def}) allows for the following useful characterizations of isometric isomorphisms:
\begin{theorem}[Theorem 6.2, \cite{Rieffel00}]
Let $(\A,\Lip_\A)$ and $(\B,\Lip_\B)$ be two {\gQqcms s}. A *-isomorphism $\pi : \A\rightarrow\B$ is an isometric isomorphism if and only if $\Lip_\B\circ\pi = \Lip_\A$.
\end{theorem}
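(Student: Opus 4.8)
The plan is to reduce both implications to a single duality statement relating $\Lip_\A$ and $\Kantorovich{\Lip_\A}$, namely the recovery formula
\[ \Lip_\A(a) = \sup\left\{ \frac{|\varphi(a)-\psi(a)|}{\Kantorovich{\Lip_\A}(\varphi,\psi)} : \varphi,\psi\in\StateSpace(\A),\ \varphi\neq\psi \right\} \]
valid for every $a\in\sa{\A}$ (both sides allowed to be $\infty$), and similarly for $\Lip_\B$. Establishing this formula is the main obstacle, and it is precisely where the lower semicontinuity hypothesis (Assertion (2) of Definition (\ref{Qqcms-def})) is used; once it is in hand, the theorem follows essentially by substitution.

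To prove the recovery formula, the inequality asserting that the supremum is at most $\Lip_\A(a)$ is immediate: by homogeneity, the definition of $\Kantorovich{\Lip_\A}$ gives $|\varphi(a)-\psi(a)|\leq \Lip_\A(a)\,\Kantorovich{\Lip_\A}(\varphi,\psi)$ for all states. The reverse inequality is the substantive part. I would argue by separation: suppose $\Lip_\A(a_0)>1$, and consider the set $\mathscr{L}_1=\{a\in\sa{\A}:\Lip_\A(a)\leq 1\}$, which by hypothesis is norm-closed, and which is also convex, balanced, and contains $\R\unit_\A$. By Hahn--Banach separation there is a bounded self-adjoint functional $\mu$ on $\A$ and $\alpha\in\R$ with $\mu(a_0)>\alpha\geq\sup_{b\in\mathscr{L}_1}\mu(b)$. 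Since $\R\unit_\A\subseteq\mathscr{L}_1$ we get $\mu(\unit_\A)=0$, and since $\mathscr{L}_1$ is balanced, $\beta:=\sup_{b\in\mathscr{L}_1}\mu(b)=\sup_{b\in\mathscr{L}_1}|\mu(b)|$ is finite and strictly positive (positivity because $\mu\neq 0$ and $\dom{\Lip_\A}$ is dense). The Jordan decomposition $\mu=\mu_+-\mu_-$ together with $\mu(\unit_\A)=0$ forces $\|\mu_+\|=\|\mu_-\|=:c>0$, so $\varphi=c^{-1}\mu_+$ and $\psi=c^{-1}\mu_-$ are states with $\mu=c(\varphi-\psi)$. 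Then $\Kantorovich{\Lip_\A}(\varphi,\psi)=\beta/c$ while $|\varphi(a_0)-\psi(a_0)|=c^{-1}\mu(a_0)>\beta/c$, so the supremum in the recovery formula exceeds $1$; rescaling $a_0$ yields the reverse inequality in general.

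With the recovery formula available, the forward implication is a direct computation. If $\Lip_\B\circ\pi=\Lip_\A$, then since $\pi$ is a unital $\ast$-isomorphism it maps $\{a:\Lip_\A(a)\leq 1\}$ bijectively onto $\{b:\Lip_\B(b)\leq 1\}$, whence for all $\varphi,\psi\in\StateSpace(\B)$ one reads off $\Kantorovich{\Lip_\A}(\varphi\circ\pi,\psi\circ\pi)=\Kantorovich{\Lip_\B}(\varphi,\psi)$ straight from the definition of the \mongekant. As $\pi^\ast$ is moreover a weak*-homeomorphic bijection of the state spaces, it is an isometry onto, so $\pi$ is an isometric isomorphism.

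For the converse, suppose $\pi^\ast\colon\StateSpace(\B)\to\StateSpace(\A)$ is a surjective isometry. I would apply the recovery formula to $\Lip_\A$ at a fixed $a\in\sa{\A}$ and substitute $\varphi=\varphi'\circ\pi$, $\psi=\psi'\circ\pi$ with $\varphi',\psi'\in\StateSpace(\B)$. Using $\Kantorovich{\Lip_\A}(\varphi'\circ\pi,\psi'\circ\pi)=\Kantorovich{\Lip_\B}(\varphi',\psi')$ (from the isometry hypothesis) and $(\varphi'\circ\pi)(a)=\varphi'(\pi(a))$, the recovery formula for $\Lip_\A$ at $a$ transforms term by term into the recovery formula for $\Lip_\B$ at $\pi(a)$, giving $\Lip_\A(a)=\Lip_\B(\pi(a))$ for every $a\in\sa{\A}$. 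Here the surjectivity of $\pi^\ast$ is exactly what guarantees that the substituted pairs $(\varphi',\psi')$ exhaust all pairs of states of $\B$, so that no elements of the supremum are lost.
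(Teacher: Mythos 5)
Your argument is correct, and it is essentially the standard one: the paper does not prove this statement itself but cites Rieffel's Theorem 6.2 of \cite{Rieffel00}, whose proof rests on exactly the duality you establish, namely that a norm-closed (lower semicontinuous) Lip-norm is recovered from its {\mongekant} by the formula $\Lip(a)=\sup\{|\varphi(a)-\psi(a)|/\Kantorovich{\Lip}(\varphi,\psi)\}$, proved by Hahn--Banach separation of the closed convex balanced set $\{a:\Lip(a)\leq 1\}$ from a point, followed by the Jordan decomposition of the separating functional. Your handling of the two implications from that formula matches the cited proof, so there is nothing to add.
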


The fundamental characterization of quantum compact metric spaces, due to Rieffel \cite{Rieffel98a, Rieffel99, Ozawa05}, will be used repeatedly in this work, so we include it here:

\begin{theorem}[Rieffel's characterization of compact quantum metric spaces]\label{Rieffel-thm}
Let $(\A,\Lip)$ be a pair of a unital C*-algebra $\A$ and a seminorm $\Lip$ defined on a dense subspace of $\sa{\A}$ such that $\{a\in\sa{\A}:\Lip(a) = 0 \} = \R\unit_\A$. The following assertions are equivalent:
\begin{enumerate}
\item The {\mongekant} $\Kantorovich{\Lip}$ metrizes the weak* topology of $\StateSpace(\A)$,
\item $\diam{\StateSpace(\A)}{\Kantorovich{\Lip}} < \infty$ and $\{a\in\sa{\A} : \Lip(a)\leq 1, \|a\|_\A\leq 1\}$ is totally bounded for $\|\cdot\|_\A$,
\item for some state $\mu \in\StateSpace(\A)$, the set $\{a\in\sa{\A} : \mu(a) = 0 \text{ and }\Lip(a)\leq 1\}$ is totally bounded for $\|\cdot\|_\A$,
\item for all states  $\mu \in\StateSpace(\A)$, the set $\{a\in\sa{\A} : \mu(a) = 0 \text{ and }\Lip(a)\leq 1\}$ is totally bounded for $\|\cdot\|_\A$.
\end{enumerate}
\end{theorem}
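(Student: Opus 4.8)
The plan is to establish the cycle of implications $(1)\Rightarrow(2)\Rightarrow(4)\Rightarrow(3)\Rightarrow(1)$. Two standing facts underpin everything. First, since $\A$ is unital, $\StateSpace(\A)$ is weak* compact by Banach--Alaoglu. Second, the self-adjoint part $\sa{\A}$ embeds isometrically into the real continuous functions $C_\R(\StateSpace(\A))$ (with the weak* topology) via $a\mapsto\widehat a$, $\widehat a(\varphi)=\varphi(a)$, because $\|a\|_\A=\sup_{\varphi\in\StateSpace(\A)}|\varphi(a)|$ for self-adjoint $a$. Under this embedding, $\Kantorovich{\Lip}(\varphi,\psi)=\sup\{|\widehat a(\varphi)-\widehat a(\psi)|:\Lip(a)\leq 1\}$, so each $\widehat a$ with $\Lip(a)\leq 1$ is $1$-Lipschitz for $\Kantorovich{\Lip}$. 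I would also record at the outset the elementary observation that the $\Kantorovich{\Lip}$-topology on $\StateSpace(\A)$ is always finer than the weak* topology: if $\Kantorovich{\Lip}(\varphi_n,\varphi)\to 0$ then $|\varphi_n(a)-\varphi(a)|\leq\Lip(a)\,\Kantorovich{\Lip}(\varphi_n,\varphi)\to 0$ for every $a\in\dom{\Lip}$, and density of $\dom{\Lip}$ together with the uniform bound $\|\varphi_n\|=1$ upgrades this to weak* convergence.

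For $(1)\Rightarrow(2)$: if $\Kantorovich{\Lip}$ metrizes the weak* topology, then $(\StateSpace(\A),\Kantorovich{\Lip})$ is compact, hence of finite diameter. Moreover the family $\{\widehat a:\Lip(a)\leq 1,\ \|a\|_\A\leq 1\}$ is uniformly bounded by $1$ and equicontinuous (indeed $1$-Lipschitz) on the compact metric space $(\StateSpace(\A),\Kantorovich{\Lip})$, so by Arzel\`a--Ascoli it is totally bounded in $C_\R(\StateSpace(\A))$; since the embedding is isometric, $\{a:\Lip(a)\leq 1,\ \|a\|_\A\leq 1\}$ is totally bounded in $\sa{\A}$. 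For $(2)\Rightarrow(4)$: fix any state $\mu$ and let $D=\diam{\StateSpace(\A)}{\Kantorovich{\Lip}}$. If $\mu(a)=0$ and $\Lip(a)\leq 1$, then $|\psi(a)|=|\psi(a)-\mu(a)|\leq\Kantorovich{\Lip}(\psi,\mu)\leq D$ for every state $\psi$, whence $\|a\|_\A\leq D$; thus $\{a:\mu(a)=0,\ \Lip(a)\leq 1\}$ is contained in $\max\{D,1\}\cdot\{a:\Lip(a)\leq 1,\ \|a\|_\A\leq 1\}$ and is therefore totally bounded. The implication $(4)\Rightarrow(3)$ is immediate.

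The substantial direction is $(3)\Rightarrow(1)$. Using that $\Lip$ annihilates exactly $\R\unit_\A$, for any $a$ with $\Lip(a)\leq 1$ the element $a-\mu(a)\unit_\A$ lies in $L:=\{b:\mu(b)=0,\ \Lip(b)\leq 1\}$ and yields the same differences $\varphi(a)-\psi(a)$, so $\Kantorovich{\Lip}(\varphi,\psi)=\sup_{b\in L}|\varphi(b)-\psi(b)|$. By hypothesis $L$ is totally bounded, hence norm-bounded, which already gives finite diameter. To conclude I would show weak* convergence implies $\Kantorovich{\Lip}$ convergence: given $\varepsilon>0$, cover $L$ by finitely many $\|\cdot\|_\A$-balls centered at $b_1,\dots,b_k$ of radius $\varepsilon$; for $\varphi_n\to\varphi$ weak* and any $b\in L$, choosing the nearest $b_j$ gives $|\varphi_n(b)-\varphi(b)|\leq 2\varepsilon+\max_j|\varphi_n(b_j)-\varphi(b_j)|$, so $\limsup_n\Kantorovich{\Lip}(\varphi_n,\varphi)\leq 2\varepsilon$, and $\varepsilon$ being arbitrary the convergence follows. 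Combined with the standing observation, the two topologies coincide on $\StateSpace(\A)$; since weak* is compact Hausdorff and metrizable by $\Kantorovich{\Lip}$, assertion (1) holds.

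The main obstacle is the direction $(3)\Rightarrow(1)$: although the $\Kantorovich{\Lip}$-topology is automatically finer than the weak* topology, proving the reverse inclusion is where total boundedness must be exploited, via the reduction to $\mu$-normalized elements and the finite $\varepsilon$-net argument that makes the supremum defining $\Kantorovich{\Lip}$ behave continuously along weak*-convergent nets. Care is also needed to confirm that $\Kantorovich{\Lip}$ is genuinely finite --- not merely a pseudometric valued in $[0,\infty]$ --- before invoking compactness, which again follows from norm-boundedness of $L$.
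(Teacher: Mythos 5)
Your proof is correct, and since the paper states this result as a quoted theorem of Rieffel (citing \cite{Rieffel98a,Rieffel99,Ozawa05}) without reproducing a proof, the only comparison available is with those sources --- where the argument runs along essentially the same lines as yours: the isometric embedding $a\mapsto\widehat{a}$ into $C_\R(\StateSpace(\A))$, Arzel\`a--Ascoli for $(1)\Rightarrow(2)$, reduction to the $\mu$-normalized ball via $a\mapsto a-\mu(a)\unit_\A$, and the finite $\varepsilon$-net argument showing continuity of the identity from the weak* topology to the $\Kantorovich{\Lip}$-topology (which, being a continuous bijection from a compact space to a Hausdorff one, is then a homeomorphism). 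The one point worth making explicit is that $\Kantorovich{\Lip}$ separates states --- which follows from the density of $\dom{\Lip}$ in $\sa{\A}$ --- so that it is a genuine metric and not merely a finite pseudometric inducing the right topology; you verify finiteness but leave point-separation implicit.
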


One may replace total boundedness by actual compactness in Theorem (\ref{Rieffel-thm}) when working with {\gQqcms s}, since their Lip-norms are lower semi-continuous.

We extend the theory of the dual Gromov-Hausdorff propinquity to the class of {\gQqcms s}. Our original construction \cite{Latremoliere13c} was developed for {\Lqcms s}. Much of our work carries naturally to {\gQqcms s}. We follow the improvements to our original construction made in \cite{Latremoliere14} for our general construction, and we refer to these two works heavily, focusing here only on the changes which are needed.

We begin by extending the notion of a tunnel between {\gQqcms s}, which is our noncommutative analogue of a pair of isometric embeddings of two {\gQqcms s} into a third {\gQqcms}.

\begin{definition}\label{tunnel-def}
Let $F : [0,\infty)^4\rightarrow [0,\infty)$ be a permissible function. Let $(\A_1,\Lip_1)$, $(\A_2,\Lip_2)$ be two {\Qqcms{F}s}. An \emph{$F$-tunnel} $(\D,\Lip_\D,\pi_1,\pi_2)$ from $(\A_1,\Lip_1)$ to $(\A_2,\Lip_2)$ is a {\Qqcms{F}} $(\D,\Lip_\D)$ and two *-epimorphisms $\pi_1:\D \twoheadrightarrow \A_1$ and $\pi_2 : \D \twoheadrightarrow \A_2$ such that, for all $j\in\{1,2\}$, and for all $a\in\sa{\A_j}$, we have:
\begin{equation*}
\Lip_j(a) = \inf \{ \Lip_\D(d) : d\in\sa{\D}\text{ and }\pi_j(d) = a \}\text{.}
\end{equation*}
\end{definition}

\begin{notation}
For any $C\geq 1$ and $D\geq 0$ and any two {\Qqcms{(C,D)}s} $(\A,\Lip_\A)$ and $(\B,\Lip_\B)$, a \emph{$(C,D)$-tunnel} from $(\A,\Lip_\A)$ to $(\B,\Lip_\B)$ is a $F_{C,D}$-tunnel, where $F_{C,D} : (a,b,l_a,l_b) \in [0,\infty)^4\mapsto C(a l_b + b l_a) + D l_a l_b$. 
\end{notation}

Tunnels always exists between any two {\gQqcms s}, as in \cite[Proposition 4.6]{Latremoliere13}. We will include this construction later on for the benefit of the reader, and then we will be able to use this construction to provide a bound on the propinquity.

We assign a numerical value to a tunnel designed to measure how far two {\gQqcms s} are, or rather how long the given tunnel is. As a matter of notation:
\begin{notation}
The Hausdorff distance on compact subsets of a compact metric space $(X,\mathrm{m})$ is denoted by $\Haus{\mathrm{m}}$. If $N$ is a norm on a vector space $X$, the Hausdorff distance on the class of closed subsets of any compact subset of $X$ is denoted by $\Haus{N}$.
\end{notation}

\begin{notation}
For any positive unital linear map $\pi : \A\rightarrow\B$ between two unital C*-algebras $\A$ and $\B$, the map $\varphi\in\StateSpace(\B) \mapsto \varphi\circ\pi \in\StateSpace(\A)$ is denote by $\pi^\ast$.
\end{notation}

\begin{definition}\label{extent-def}
Let $(\A_1,\Lip_1)$ and $(\A_2,\Lip_2)$ be two {\Qqcms{F}s} for some permissible function $F:[0,\infty)^4\rightarrow [0,\infty)$. The \emph{extent} $\tunnelextent{\tau}$ of an $F$-tunnel $\tau = (\D,\Lip_\D,\pi_1,\pi_2)$ from $(\A_1,\Lip_1)$ to $(\A_2,\Lip_2)$ is the real number:
\begin{equation*}
\max \left\{ \Haus{\Kantorovich{\Lip_\D}}(\StateSpace(\D),\pi_j^\ast(\StateSpace(\A_j))) : j \in \{1,2\} \right\}\text{.}
\end{equation*}
\end{definition}

We now see that tunnels always exist:

\begin{proposition}\label{tunnels-exist-prop}
Let $F$ be a permissible function and let $(\A,\Lip_\A)$ and $(\B,\Lip_\B)$ be two {\Qqcms{F}s}. Let:
\begin{equation*}
D = \max\left\{\diam{\StateSpace(\A)}{\Kantorovich{\Lip_\A}}, \diam{\StateSpace(\B)}{\Kantorovich{\Lip_\B}} \right\}\text{.}
\end{equation*}
Then for all $\epsilon \geq 0$ such that $D+\epsilon > 0$, there exists an $F$-tunnel $\tau$ with:
\begin{equation*}
\tunnelextent{\tau} \leq D + \epsilon \text{.}
\end{equation*}
\end{proposition}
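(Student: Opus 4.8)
The plan is to realize the tunnel on the C*-algebraic direct sum $\D = \A \oplus \B$, taking for $\pi_1 \colon \D \twoheadrightarrow \A$ and $\pi_2 \colon \D \twoheadrightarrow \B$ the two coordinate *-epimorphisms, and to look for a Lip-norm of the shape
\[
\Lip_\D(a,b) = \max\left\{ \Lip_\A(a),\ \Lip_\B(b),\ \mathsf{b}(a,b) \right\}\text{,}
\]
in which the first two slots assemble into the direct-sum seminorm $\Lip_\A \oplus \Lip_\B$ and $\mathsf{b}$ is a \emph{bridging} seminorm. First I would observe that $\Lip_\A \oplus \Lip_\B$ is already $F$-quasi-Leibniz: since $\|(a,b)\|_\D = \max\{\|a\|_\A,\|b\|_\B\}$, applying the $F$-quasi-Leibniz inequality of Definition (\ref{QLeibniz-def}) in each coordinate and using that $F$ is nondecreasing returns the same inequality for the maximum. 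The only defect of $\Lip_\A \oplus \Lip_\B$ as a Lip-norm is that its null space is $\R\unit_\A \oplus \R\unit_\B$ rather than $\R\unit_\D$; the sole purpose of $\mathsf{b}$ is to collapse this two-dimensional kernel onto the scalars, so the entire problem is the choice of $\mathsf{b}$.

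The tempting first choice is the single-functional bridge $\mathsf{b}(a,b) = \frac{1}{\gamma}\left|\mu(a) - \nu(b)\right|$ for fixed states $\mu \in \StateSpace(\A)$, $\nu \in \StateSpace(\B)$ and a parameter $\gamma > 0$. With this choice the non-Leibniz axioms of Definition (\ref{Qqcms-def}) fall out quickly: the kernel collapses to $\R\unit_\D$; its unit ball has both coordinates confined, modulo scalars, to the totally bounded sets of Theorem (\ref{Rieffel-thm}), so $\Lip_\D$ is closed and its {\mongekant} metrizes the weak* topology; and the quotient identity of Definition (\ref{tunnel-def}) holds because, given $a \in \sa{\A}$, the lift $(a,\mu(a)\unit_\B)$ has $\Lip_\B(\mu(a)\unit_\B) = 0$ and $\mathsf{b} = 0$, so $\Lip_\D(a,\mu(a)\unit_\B) = \Lip_\A(a)$. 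The extent is controlled too: the bridge forces $|\mu(a)-\nu(b)| \leq \gamma$ whenever $\Lip_\D(a,b) \leq 1$, and since every state of $\D$ is a convex combination $\lambda(\phi\circ\pi_1) + (1-\lambda)(\psi\circ\pi_2)$, the comparison state $\lambda\phi + (1-\lambda)\mu$ pulled back through $\pi_1$ lies within $D + \gamma$ of it, and symmetrically for $\pi_2$. Thus $\tunnelextent{\tau} \leq D + \gamma$, and setting $\gamma = \epsilon$ would finish — were it not for the Leibniz property.

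The hard part, and the reason the proposition is not trivial, is that this functional bridge is \emph{not} $F$-quasi-Leibniz for the prescribed $F$. Expanding $\mathsf{b}(\Jordan{a_1}{a_2},\Jordan{b_1}{b_2})$ produces the covariance terms $\mu(\Jordan{a_1}{a_2}) - \mu(a_1)\mu(a_2)$ (and likewise for $\nu$), which are not Leibniz remainders. Using the fundamental estimate $\|a - \mu(a)\unit_\A\|_\A \leq D\,\Lip_\A(a)$, valid since the diameter of $\StateSpace(\A)$ is at most $D$, these covariances are bounded by $D^2\,\Lip_\A(a_1)\Lip_\A(a_2)$, that is, by a term of the form $(D^2/\gamma)\,l_x l_y$. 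Hence the functional bridge only makes $\Lip_\D$ a {\Qqcms{(1,D')}} with $D' \sim 1/\gamma$: harmless if $F$ already dominates such an $F_{1,D'}$, but fatal in the Leibniz case $F = L$, where no $l_x l_y$-term is permitted.

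To remedy this while keeping the \emph{same} $F$, I would replace $\mathsf{b}$ by a genuinely Leibniz seminorm and invoke the stability flagged after Definition (\ref{QLeibniz-def}): the maximum of an $F$-quasi-Leibniz seminorm and a Leibniz seminorm is again $F$-quasi-Leibniz, with $F$ unchanged precisely because $F$ is permissible. The guiding model is the commutative mapping cylinder: embed $\A$ and $\B$ unitally into a common C*-algebra, pass to a path algebra of connecting trajectories, and take $\mathsf{b}$ to be the Lipschitz constant of the path over an interval of length $\geq D$; such a $\mathsf{b}$ is automatically Leibniz, and the quotient identity becomes a McShane–Whitney-type extension, feasible exactly because $\|a - \mu(a)\unit_\A\|_\A \leq D\,\Lip_\A(a)$ lets a straight connecting path realize the lift with Lipschitz constant at most $\Lip_\A(a)$. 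I therefore expect the genuine obstacle to be this reconciliation — producing a bridge that is simultaneously Leibniz (for the quasi-Leibniz property with the given $F$) and cheaply liftable (for the quotient identity) while remaining calibrated to force $\tunnelextent{\tau} \leq D + \epsilon$ — with the verification of Rieffel's total-boundedness criterion for the connecting paths as the accompanying technical point.
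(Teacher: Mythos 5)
Your architecture is the paper's: the paper also builds the tunnel on $\D=\A\oplus\B$ with the coordinate surjections and a Lip-norm of the form $\max\{\Lip_\A(a),\Lip_\B(b),\mathsf{b}(a,b)\}$, and your two key diagnoses are both correct --- the single-functional bridge $\frac{1}{\gamma}|\mu(a)-\nu(b)|$ picks up covariance terms of order $l_xl_y$ and so fails for general permissible $F$, and the cure is to make $\mathsf{b}$ genuinely Leibniz so that Assertion (\ref{permissible-eq}) of Definition (\ref{permissible-def}) absorbs it into the maximum without changing $F$. What is missing is the bridge itself. The paper's choice is concrete and short: take faithful unital representations $\pi_\A,\pi_\B$ of $\A,\B$ on a common separable Hilbert space $\Hilbert$ (available since the state spaces are weak* metrizable, hence the algebras separable), set $\Omega$ equal to the bounded operators on $\Hilbert$, and put $\mathsf{b}(a,b)=\frac{1}{D+\epsilon}\|\pi_\A(a)-\pi_\B(b)\|_\Omega$. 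This is Leibniz by the usual two-term estimate, the quotient identity holds via the constant lift $(a,\psi(a)\unit_\B)$ because $\|\pi_\A(a)-\psi(a)\pi_\B(\unit_\B)\|_\Omega=\|a-\psi(a)\unit_\A\|_\A\leq D\,\Lip_\A(a)$, and the extent bound $D+\epsilon$ comes from extending a state of $\pi_\A(\A)$ to $\Omega$ by Hahn--Banach and restricting to $\pi_\B(\B)$, followed by the convexity of $\Kantorovich{\Lip}$ in each variable. Your closing remark that a ``straight connecting path'' over an interval of length $D+\epsilon$ realizes the lift is exactly this formula in disguise; you simply never commit to it.

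The mapping-cylinder/path-algebra detour you end on is where the proposal goes off the rails rather than merely stopping early. Passing to an algebra of connecting trajectories changes $\D$ away from $\A\oplus\B$ (contradicting your own first paragraph, where $\pi_1,\pi_2$ are the coordinate maps), makes $\D$ infinite dimensional even when $\A$ and $\B$ are not, and creates a genuinely new burden: verifying Rieffel's total-boundedness criterion for a Lip-ball of paths is not a routine ``accompanying technical point,'' and the quotient maps, quasi-Leibniz property and extent would all have to be re-derived in that setting. None of this is needed: keeping $\D=\A\oplus\B$ and taking $\mathsf{b}$ to be the scaled norm difference in a common faithful representation, the remaining axioms of Definition (\ref{Qqcms-def}) follow from \cite{Rieffel00} exactly as you describe for the functional bridge. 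So the gap is a single missing definition, but as written the proof is not complete and the proposed route to completing it would substantially overcomplicate the argument.
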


\begin{proof}
We follow our earlier work in \cite[Proposition 4.6]{Latremoliere13}. 

Let $\Hilbert$ be a separable, infinite dimensional Hilbert space. Since $\A$ and $\B$ are separable (as the weak* topologies on their state spaces are metrizable), they both admit respective faithful unital representations $\pi_\A$ and $\pi_\B$ on $\Hilbert$. Let $\Omega$ be the C*-algebra of all bounded linear operators on $\Hilbert$. 

We now define an $F$-tunnel as follows. Let $\epsilon \geq 0$ be chosen so that $D + \epsilon > 0$. For any $a \in \sa{\A}$ and $b\in\sa{\B}$, we set:
\begin{equation*}
\Lip(a,b) = \max\left\{ \Lip_\A(a), \Lip_\B(b), \frac{1}{D+\epsilon} \|\pi_\A(a) - \pi_\B(b)\|_\Omega \right\} \text{.}
\end{equation*} 

First we observe that:
\begin{equation*}
\begin{split}
\Lip\left(\Jordan{a}{a'}, \Jordan{b}{b'}\right) &\leq \max\{\Lip_\A(\Jordan{a}{a'}), \Lip_\B(\Jordan{b}{b'}), \frac{1}{D+\epsilon}\|\pi_\A(aa')-\pi_\B(bb')\|_\Omega\}\\
&\leq \max\left\{\begin{array}{l} 
F(\|a\|_\A,\|a'\|_\A,\Lip_\A(a),\Lip_\A(a')),\\
F(\|b\|_\B,\|b'\|_\B,\Lip_\B(b),\Lip_\B(b')),\\
\frac{1}{D+\epsilon} \left(\|a\|_\A\|\pi_\A(a')-\pi_\B(b')\|_\Omega + \|b'\|_\B\|\pi_\A(a)-\pi_\B(b)\|_\Omega\right)
\end{array}\right\}\text{.}
\end{split}
\end{equation*}
Now, since $F$ is permissible, we note that:
\begin{equation*}
F(\|a\|_\A,\|a'\|_\A,\Lip_\A(a),\Lip_\A(a'))\leq F(\|(a,b)\|_{\A\oplus\B} , \|(a',b')\|_{\A\oplus\B}, \Lip(a,b), \Lip(a',b'))
\end{equation*}
while similarly:
\begin{equation*}
F(\|b\|_\B,\|b'\|_\B,\Lip_\B(b),\Lip_\B(b'))\leq F(\|(a,b)\|_{\A\oplus\B} , \|(a',b')\|_{\A\oplus\B}, \Lip(a,b), \Lip(a',b'))
\end{equation*}
and
\begin{multline*}
  \|a\|_\A\frac{\|\pi_\A(a')-\pi_\B(b')\|_\Omega}{D+\epsilon} + \|b'\|_\B\frac{\|\pi_\A(a)-\pi_\B(b)\|_\Omega}{D+\epsilon} \\ \leq F(\|(a,b)\|_{\A\oplus\B} , \|(a',b')\|_{\A\oplus\B}, \Lip(a,b), \Lip(a',b'))\text{.}
\end{multline*}
The same computation holds for the Lie product. Thus in conclusion, $\Lip$ is $F$-quasi-Leibniz.

Now, it is easy to check that $(\A\oplus\B,\Lip, \rho_\A,\rho_\B)$ is an $F$-tunnel, where $\rho_\A : \A\oplus\B \twoheadrightarrow \A$ and $\rho_\B: \A\oplus\B \twoheadrightarrow\B$ are the canonical surjections.

Indeed, let $a\in\sa{\A}$ with $\Lip_\A(a)<\infty$ and fix $\psi\in\StateSpace(\A)$. Then for any state $\varphi\in\StateSpace(\A)$:
\begin{equation*}
|\varphi(a-\psi(a)\unit_\A)| = |\varphi(a) - \psi(a)| \leq D \Lip_\A(a) \text{.}
\end{equation*}

Hence, we have:
\begin{equation*}
\Lip(a,\psi(a)\unit_\B) = \Lip_\A(a)
\end{equation*}
since $\Lip_\B(\psi(a)\unit_\B) = 0$. Since $\Lip_\A(a) \leq \Lip(a,b)$ for any $b\in\sa{\B}$, this shows that $\Lip_\A$ is the quotient of $\Lip$ for $\rho_\A$.

The same reasoning holds when $\A$ and $\B$ are switched. In addition, $\Lip$ is a Lip-norm on $\A\oplus\B$ by \cite{Rieffel00}. Thus we have proven that $(\A\oplus\B,\Lip,\rho_\A,\rho_\B)$ is an $F$-tunnel from $(\A,\Lip_\A)$ to $(\B,\Lip_\B)$.

The computation of the extent follows our work in \cite{Latremoliere13b}. Let $\varphi\in\StateSpace(\A)$. Since $\pi_\A$ is faithful, $\varphi$ defines a state on $\pi_\A(\A)$, which we then extend by Hahn-Banach to a state $\mu$ of $\Omega$. Let $\psi = \mu\circ\pi_\B$ and note that $\psi\in\StateSpace(\B)$.

Now, if $(a,b) \in \sa{\A\oplus\B}$ with $\Lip(a,b)\leq 1$, then:
\begin{equation*}
|\varphi\circ\rho_\A(a,b)-\psi\circ\rho_\B(a,b)| = |\mu(\pi_\A(a)) - \mu(\pi_\B(b))| \leq \|\pi_\A(a)-\pi_\B(b)\|_\Omega \leq D+\epsilon\text{.} 
\end{equation*}

Now, any state $\nu \in \StateSpace(\A\oplus\B)$ is a convex combination $t\varphi\circ\rho_\A + (1-t)\theta\circ\rho_\B$ of a state $\varphi$ in $\StateSpace(\A)$ and a state $\theta$ in $\StateSpace(\B)$ (by the bipolar theorem), for some $t\in [0,1]$. Using the above notations, we first find $\psi\in\StateSpace(\B)$ such that $\Kantorovich{\Lip}(\varphi\circ\rho_\A,\psi\circ\rho_\B) \leq D+\epsilon$, then we use the fact that $\Kantorovich{\Lip}$ is convex in each of its variable to see that:
\begin{equation*}
\Kantorovich{\Lip}(t \varphi \circ\rho_\A + (1-t)\theta\circ\rho_\B, t\psi\circ\rho_\B + (1-t)\theta\circ\rho_\B) \leq D + \epsilon\text{.}
\end{equation*}

The reasoning above is symmetric in $\A$ and $\B$, hence our proposition is proven.
\end{proof}

The dual propinquity may be specialized to various subclasses of {\Lqcms s}. Somewhat different requirements can be made on the class of allowable tunnels; for our purpose we shall follow the notion of an appropriate collection of tunnels, as in \cite{Latremoliere14}. We begin with the following observation, which makes explicit use of Assertion (\ref{permissible-eq}) of Definition (\ref{permissible-def}).

\begin{theorem}\label{tunnel-composition-thm}
Let $F$ be a permissible function. Let $(\A,\Lip_\A)$, $(\B,\Lip_\B)$ and $(\alg{C},\Lip_{\alg{C}})$ be three {\Qqcms{F}s}. Let $\tau_1 = (\D_1,\Lip_1,\pi_1,\pi_2)$ be an $F$-tunnel from $(\A,\Lip_\A)$ to $(\B,\Lip_\B)$ and $\tau_2=(\D_2,\Lip_2,\rho_1,\rho_2)$ be an $F$-tunnel from $(\B,\Lip_\B)$ to $(\alg{C},\Lip_{\alg{C}})$. Let $\varepsilon > 0$.

If, for all $(d_1,d_2)\in\sa{\D_1\oplus\D_2}$, we set:
\begin{equation*}
\Lip(d_1,d_2) = \max\left\{ \Lip_1(d_1), \Lip_2(d_2), \frac{1}{\varepsilon}\left\|\pi_2(d_1) - \rho_1(d_2) \right\|_\B \right\}\text{,}
\end{equation*}
and if $\eta_1 : (d_1,d_2)\in \D_1\oplus\D_2 \mapsto \pi_1(d_1)$ and $\eta_2:(d_1,d_2)\in \D_1\oplus\D_2 \mapsto \rho_2(d_2)$, then $\tau_3 = (\D_1\oplus\D_2,\Lip, \eta_1,\eta_2)$ is an $F$-tunnel from $(\A,\Lip_\A)$ to $(\alg{C},\Lip_{\alg{C}})$, whose extent satisfies:
\begin{equation*}
\tunnelextent{\tau_3} \leq \tunnelextent{\tau_1} + \tunnelextent{\tau_2} + \varepsilon\text{.}
\end{equation*}
\end{theorem}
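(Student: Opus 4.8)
The plan is to verify that $\tau_3$ satisfies every clause of the definition of an $F$-tunnel (Definition (\ref{tunnel-def})) and then to bound its extent. Writing $P_1,P_2$ for the canonical surjections of $\D_1\oplus\D_2$ onto $\D_1,\D_2$, the maps $\eta_1=\pi_1\circ P_1$ and $\eta_2=\rho_2\circ P_2$ are unital *-epimorphisms as compositions of a coordinate projection with a tunnel surjection, so the first clause is immediate. The substantive content is that $(\D_1\oplus\D_2,\Lip)$ is a \gQqcms, that $\Lip_\A$ and $\Lip_{\alg{C}}$ are the quotients of $\Lip$ through $\eta_1$ and $\eta_2$, and that the extent obeys the stated bound. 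I would organize the proof so that the quasi-Leibniz property and the Lip-norm property follow the template of Proposition (\ref{tunnels-exist-prop}), with $\B$ and the *-morphisms $\pi_2,\rho_1$ here playing the roles that $\Omega$ and the representations $\pi_\A,\pi_\B$ played there.

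For the $F$-quasi-Leibniz inequality I would repeat the computation of Proposition (\ref{tunnels-exist-prop}) with $\|\pi_\A(\cdot)-\pi_\B(\cdot)\|_\Omega$ replaced by $\|\pi_2(\cdot)-\rho_1(\cdot)\|_\B$. The only structural input is that $\pi_2,\rho_1$ are contractive *-homomorphisms, so that with $x=\pi_2(d_1)$, $x'=\pi_2(d_1')$, $y=\rho_1(d_2)$, $y'=\rho_1(d_2')$ one has
\begin{equation*}
\Jordan{x}{x'} - \Jordan{y}{y'} = \Jordan{(x-y)}{x'} + \Jordan{y}{(x'-y')}\text{,}
\end{equation*}
whence $\|\pi_2(\Jordan{d_1}{d_1'})-\rho_1(\Jordan{d_2}{d_2'})\|_\B \leq \|d_1'\|\,\|x-y\|_\B + \|d_2\|\,\|x'-y'\|_\B$, and likewise for the Lie product. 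Dividing by $\varepsilon$, bounding $\|d_1'\|,\|d_2\|$ by the norms $\|(d_1,d_2)\|_{\D_1\oplus\D_2}$, $\|(d_1',d_2')\|_{\D_1\oplus\D_2}$, and invoking the permissibility inequality (\ref{permissible-eq}) controls the bridge entry of the maximum, while the monotonicity of $F$ controls the $\Lip_1$ and $\Lip_2$ entries; thus $\Lip$ is $F$-quasi-Leibniz. Condition (1) of Definition (\ref{Qqcms-def}) holds because $\Lip(d_1,d_2)=0$ forces $d_1,d_2$ to be scalar multiples of the units whose scalars agree (the bridge term vanishing), and conditions (2)--(3) follow as in Proposition (\ref{tunnels-exist-prop}) from \cite{Rieffel00} and Rieffel's Theorem (\ref{Rieffel-thm}): since $\Lip\geq\Lip_1\circ P_1$ and $\Lip\geq\Lip_2\circ P_2$, its norm-bounded unit ball projects into the totally bounded unit balls of $\D_1$ and $\D_2$.

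Next I would check the quotient condition, where the precise form of $\Lip$ enters. The inequality $\inf\{\Lip(d_1,d_2):\pi_1(d_1)=a\}\geq\Lip_\A(a)$ is immediate from $\Lip(d_1,d_2)\geq\Lip_1(d_1)\geq\Lip_\A(\pi_1(d_1))$, using that $\tau_1$ is a tunnel. For the reverse, given $\delta>0$ I lift $a$ to $d_1\in\sa{\D_1}$ with $\pi_1(d_1)=a$ and $\Lip_1(d_1)\leq\Lip_\A(a)+\delta$; set $b=\pi_2(d_1)$, so $\Lip_\B(b)\leq\Lip_1(d_1)$; then, $\tau_2$ being a tunnel, I lift $b$ to $d_2\in\sa{\D_2}$ with $\rho_1(d_2)=b$ and $\Lip_2(d_2)\leq\Lip_\B(b)+\delta$. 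The crucial point is that $\pi_2(d_1)-\rho_1(d_2)=b-b=0$, so the bridge term \emph{vanishes exactly} and $\Lip(d_1,d_2)=\max\{\Lip_1(d_1),\Lip_2(d_2)\}\leq\Lip_\A(a)+2\delta$; letting $\delta\to 0$ gives equality. The symmetric argument with the roles of $\D_1,\D_2$ and $\pi_2,\rho_1$ reversed handles $\eta_2$ and $\Lip_{\alg{C}}$.

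Finally I would establish the extent bound, which is the most involved step. Since $\eta_j^\ast(\StateSpace(\cdot))\subseteq\StateSpace(\D_1\oplus\D_2)$, one direction of each Hausdorff distance is zero, so it suffices to show that every $\nu\in\StateSpace(\D_1\oplus\D_2)$ lies within $\tunnelextent{\tau_1}+\tunnelextent{\tau_2}+\varepsilon$ of $\eta_1^\ast(\StateSpace(\A))$, and symmetrically of $\eta_2^\ast(\StateSpace(\alg{C}))$. As $\Kantorovich{\Lip}$ is convex in each variable and, by the bipolar theorem as in Proposition (\ref{tunnels-exist-prop}), every state of $\D_1\oplus\D_2$ is a convex combination $t\,\psi_1\circ P_1+(1-t)\,\psi_2\circ P_2$, it is enough to treat $\psi_1\circ P_1$ and $\psi_2\circ P_2$ separately and recombine. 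For $\psi_1\circ P_1$, the map $\psi\mapsto\psi\circ P_1$ is $1$-Lipschitz from $(\StateSpace(\D_1),\Kantorovich{\Lip_1})$ into $(\StateSpace(\D_1\oplus\D_2),\Kantorovich{\Lip})$ (because $\Lip\geq\Lip_1\circ P_1$), so the defining property of $\tunnelextent{\tau_1}$ places $\psi_1\circ P_1$ within $\tunnelextent{\tau_1}$ of $\eta_1^\ast(\StateSpace(\A))$. For $\psi_2\circ P_2$, I would use the estimate, valid for every $\beta\in\StateSpace(\B)$,
\begin{equation*}
\Kantorovich{\Lip}(\psi_2\circ P_2,\psi_1\circ P_1)\leq\Kantorovich{\Lip_2}(\psi_2,\rho_1^\ast(\beta)) + \varepsilon + \Kantorovich{\Lip_1}(\pi_2^\ast(\beta),\psi_1)\text{,}
\end{equation*}
obtained by inserting $\beta\circ\rho_1$ and $\beta\circ\pi_2$ and bounding the middle difference by $\|\pi_2(d_1)-\rho_1(d_2)\|_\B\leq\varepsilon$ on the Lip-unit ball. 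Choosing $\beta$ via $\tunnelextent{\tau_2}$ so that $\Kantorovich{\Lip_2}(\psi_2,\rho_1^\ast(\beta))\leq\tunnelextent{\tau_2}$ and then taking $\psi_1=\pi_2^\ast(\beta)$ kills the last term, placing $\psi_2\circ P_2$ within $\tunnelextent{\tau_2}+\varepsilon$ of a state $\psi_1\circ P_1$, which by the previous case is within $\tunnelextent{\tau_1}$ of $\eta_1^\ast(\StateSpace(\A))$; the triangle inequality yields the bound. I expect this intertwining of the two extents through an intermediate state of $\B$, together with the convex-combination reduction, to be the main obstacle, the remaining verifications being essentially those of Proposition (\ref{tunnels-exist-prop}).
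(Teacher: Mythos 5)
Your proposal is correct and follows essentially the same route as the paper: the quasi-Leibniz verification via the Leibniz-type estimate on the bridge seminorm $N$ combined with permissibility is exactly the computation the paper writes out, and the remaining points (quotient condition via lifting through $\B$ so the bridge term vanishes, extent bound via an intermediate state of $\B$ and the convex-combination decomposition of states of $\D_1\oplus\D_2$) are precisely the argument of \cite[Theorem 3.1]{Latremoliere14}, which the paper simply cites rather than reproduces. Your reconstruction of that deferred material is accurate, so there is nothing to correct.
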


\begin{proof}
Let $\dom{\Lip_1} = \{d\in\sa{\D_1}:\Lip_1(d)<\infty\}$ and $\dom{\Lip_2}=\{d\in\sa{\D_2}:\Lip_2(d)<\infty\}$. Note that:
\begin{equation*}
\dom{\Lip} = \{ (d_1,d_2)\in\sa{\D_1\oplus\D_2} : \Lip(d_1,d_2)<\infty \} = \dom{\Lip_1}\oplus\dom{\Lip_2}\text{.}
\end{equation*}

Let $N(d_1,d_2) = \frac{1}{\varepsilon}\|\pi_2(d_1)-\rho_1(d_2)\|_\B$ for all $(d_1,d_2)\in\dom{\Lip}$. For all $d_1,d_1'\in\sa{\D_1}$ and $d_2,d_2'\in\sa{\D_2}$, we have:
\begin{equation*}
\begin{split}
N(d_1 d_1', d_2 d_2') &\leq \|d_1\|_{\D_1} N(d_1', d_2') + \|d_2'\|_{\D_2} N(d_1,d_2)\\
&\leq \|(d_1,d_2)\|_{\D_1\oplus \D_2}N(d_1', d_2') + \|(d'_1,d'_2)\|_{\D_1\oplus\D_2}N(d_1,d_2)\\
&\leq \|(d_1,d_2)\|_{\D_1\oplus \D_2} \Lip(d_1', d_2') + \|(d_1',d_2')\|_{\D_1\oplus\D_2} \Lip(d_1,d_2)\\
&\leq F(\|(d_1,d_2)\|_{\D_1\oplus\D_2}, \|(d_1',d_2')\|_{\D_1\oplus\D_2}, \Lip(d_1,d_2), \Lip(d_1',d_2'))\\ &\quad \text{ by Assertion (\ref{permissible-eq}) of Definition (\ref{permissible-def}) \text{.}}
\end{split}
\end{equation*} 

We then deduce immediately that for all $d_1,d_1'\in\dom{\Lip_1}$ and $d_2,d_2'\in\dom{\Lip_2}$:
\begin{multline*}
\max\{ N\left(\Jordan{d_1}{d_1'},\Jordan{d_2}{d_2'}\right), N\left(\Lie{d_1}{d_1'},\Lie{d_2}{d_2'}\right) \}\\ \leq F(\|(d_1,d_2)\|_{\D_1\oplus\D_2}, \|(d_1',d_2')\|_{\D_1\oplus\D_2}, \Lip(d_1,d_2), \Lip(d_1',d_2'))\text{.}
\end{multline*}

Thus it follows immediately that $\Lip$ is an $F$-quasi-Leibniz seminorm since $\Lip_1$ and $\Lip_2$ are.

The proof of \cite[Theorem 3.1]{Latremoliere14} now applies to reach the conclusion of this theorem.
\end{proof}

We thus define, following \cite[Definition 3.5]{Latremoliere14}:
\begin{definition}\label{appropriate-def}
Let $F$ be a permissible function. Let $\mathcal{C}$ be a nonempty class of {\Qqcms{F}s}. A class $\mathcal{T}$ of $F$-tunnels is $(\mathcal{C},F)$-\emph{appropriate} when:
\begin{enumerate}
\item for all $(\A,\Lip_\A)$ and $(\B,\Lip_\B)$ in $\mathcal{C}$, there exists $\tau \in \mathcal{T}$ from $(\A,\Lip_\A)$ to $(\B,\Lip_\B)$,
\item if there exists an isometric isomorpism $h : (\A,\Lip_\A) \rightarrow (\B,\Lip_\B)$ for any two $(\A,\Lip_\A)$, $(\B,\Lip_\B)$ in $\mathcal{C}$, then both $(\A,\Lip_\A,\mathrm{id}_\A,h)$ and $(\B,\Lip_\B,h^{-1},\mathrm{id}_\B)$ are elements of $\mathcal{T}$, where $\mathrm{id}_E$ is the identity map on any set $E$,
\item if $\tau = (\D,\Lip,\pi,\rho)\in\mathcal{T}$ then $\tau^{-1} = (\D,\Lip,\rho,\pi)\in\mathcal{T}$,
\item if $(\D,\Lip,\pi,\rho)\in\mathcal{T}$ then the co-domains of $\pi$ and $\rho$ lie in $\mathcal{C}$,
\item if $\tau_1,\tau_2\in\mathcal{T}$ and $\varepsilon > 0$, then the tunnel $\tau_3$ defined by Theorem (\ref{tunnel-composition-thm}) also lies in $\mathcal{T}$.
\end{enumerate}
\end{definition}

Let $F$ be a permissible function, and let $\mathcal{C}$ be a nonempty class of {\Qqcms{F}s}. Let $\mathcal{T}$ be a $(\mathcal{C},F)$-appropriate class of $F$-tunnels. Let $(\A,\Lip_\A)$ and $(\B,\Lip_\B)$ be in $\mathcal{C}$. The set of all $F$-tunnels in $\mathcal{T}$ from $(\A,\Lip_\A)$ to $(\B,\Lip_\B)$ is denoted by:
\begin{equation*}
\tunnelset{\A,\Lip_\A}{\B,\Lip_\B}{\mathcal{T}}\text{.}
\end{equation*}

The main tool for our work is a distance constructed on the class of {\gQqcms s}. This distance is a form of our dual propinquity adapted to {\gQqcms s}, and its construction is modeled after the Gromov-Hausdorff distance \cite{Gromov,Gromov81}:

\begin{definition}\label{propinquity-def}
Let $F$ be a permissible function. Let $\mathcal{C}$ be some nonempty class of {\Qqcms{F}s} and let $\mathcal{T}$ be a class of $(\mathcal{C},F)$-appropriate tunnels. 

The \emph{$\mathcal{T}$-dual Gromov-Hausdorff propinquity} $\propinquity{\mathcal{T}}((\A,\Lip_\A),(\B,\Lip_\B))$ between two {\Qqcms{F}s} $(\A,\Lip_\A)$ and $(\B,\Lip_\B)$ is the number:
\begin{equation*}
\inf\left\{ \tunnelextent{\tau} : \tau \in \tunnelset{\A,\Lip_\A}{\B,\Lip_\B}{\mathcal{T}} \right\}\text{.}
\end{equation*}
\end{definition}

\begin{notation}
We simply shall write $\propinquity{F}$ for the dual propinquity defined on the class of all {\Qqcms{F}s}, using the class of all $F$-tunnels, for any permissible function $F$. Moreover, if $C\geq 1$ and $D\geq 0$ are given, and if $F_{C,D} : x,y,l_x,l_y \mapsto C(x l_y + y l_x) + D l_x l_y$, then $\propinquity{F_{C,D}}$ will simply be denoted by $\propinquity{C,D}$. 
\end{notation}

The original construction of the dual propinquity \cite{Latremoliere13c} involved the \emph{length} of a tunnel, rather than its extent. The extent \cite{Latremoliere14} was useful to obtain the triangle inequality more easily. However, estimates on the dual propinquity may be easier to derive using our notion of length. The length and extent are equivalent, in the sense described below.

\begin{definition}\label{length-def}
Let $F$ be a permissible function. Let $\tau = (\D,\Lip_\D,\pi_\A,\pi_\B)$ be a $F$-tunnel from $(\A,\Lip_\A)$ to $(\B,\Lip_\B)$.
\begin{enumerate}
\item The \emph{reach} of $\tau$ is:
\begin{equation*}
\tunnelreach{\tau} = \Haus{\Kantorovich{\Lip_\D}}\left(\pi_\A^\ast\left(\StateSpace(\A)\right),\pi_\B^\ast\left(\StateSpace(\B)\right)\right)\text{.}
\end{equation*}
\item The \emph{depth} of $\tau$ is:
\begin{equation*}
\tunneldepth{\tau} = \Haus{\Kantorovich{\Lip_\D}}\left(\StateSpace(\D), \co{\pi_\A^\ast\left(\StateSpace(\A)\right)\cup\pi_\B^\ast\left(\StateSpace(\B)\right)}\right)\text{,}
\end{equation*}
where $\co{X}$ is the closed convex hull of $X$.
\item The \emph{length} of $\tau$ is:
\begin{equation*}
\tunnellength{\tau} = \max\left\{ \tunneldepth{\tau}, \tunnelreach{\tau} \right\}\text{.}
\end{equation*}
\end{enumerate}
\end{definition}

Using the proof of \cite[Proposition 2.12]{Latremoliere14}, we obtain:

\begin{proposition}
Let $F$ be a permissible function. For any $F$-tunnel $\tau$, we have:
\begin{equation*}
\tunnellength{\tau}\leq\tunnelextent{\tau}\leq 2\tunnellength{\tau}\text{.}
\end{equation*}
\end{proposition}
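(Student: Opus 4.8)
The plan is to prove the two inequalities separately, exploiting the elementary fact that $\pi_\A^\ast(\StateSpace(\A))$ and $\pi_\B^\ast(\StateSpace(\B))$ are weak*-compact convex subsets of $\StateSpace(\D)$ (they are affine, continuous images of state spaces under the duals of unital positive maps), and consequently so is their closed convex hull $\co{\pi_\A^\ast(\StateSpace(\A))\cup\pi_\B^\ast(\StateSpace(\B))}$. Throughout I abbreviate the \mongekant{} $\Kantorovich{\Lip_\D}$ by $d$, and write $S_\A = \pi_\A^\ast(\StateSpace(\A))$ and $S_\B = \pi_\B^\ast(\StateSpace(\B))$. The central bookkeeping device is that, since each of these sits inside $\StateSpace(\D)$, a Hausdorff distance such as $\Haus{d}(\StateSpace(\D),S_\A)$ reduces to the one-sided supremum $\sup_{x\in\StateSpace(\D)}\dist(x,S_\A)$, and likewise with $S_\B$ or with $\co{S_\A\cup S_\B}$ in place of $S_\A$.

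For the lower bound $\tunnellength{\tau}\leq\tunnelextent{\tau}$, I would bound the reach and the depth separately. For the reach, any point of $S_\A$ lies in $\StateSpace(\D)$, so its distance to $S_\B$ is at most $\sup_{x\in\StateSpace(\D)}\dist(x,S_\B)=\Haus{d}(\StateSpace(\D),S_\B)\leq\tunnelextent{\tau}$, and symmetrically; taking the maximum over the two one-sided suprema gives $\tunnelreach{\tau}\leq\tunnelextent{\tau}$. For the depth, since $S_\A\subseteq\co{S_\A\cup S_\B}$, for each $x\in\StateSpace(\D)$ we have $\dist(x,\co{S_\A\cup S_\B})\leq\dist(x,S_\A)$, whose supremum over $\StateSpace(\D)$ is $\Haus{d}(\StateSpace(\D),S_\A)\leq\tunnelextent{\tau}$; hence $\tunneldepth{\tau}\leq\tunnelextent{\tau}$. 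The lower bound follows by taking the maximum of these two estimates.

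The substance of the argument is the upper bound $\tunnelextent{\tau}\leq 2\tunnellength{\tau}$, and its key step is a geometric lemma: every point of $\co{S_\A\cup S_\B}$ lies within $\tunnelreach{\tau}$ of $S_\A$ (and, symmetrically, of $S_\B$). First I would record that, because $S_\A$ and $S_\B$ are themselves convex, $\co{S_\A\cup S_\B}$ is exactly the set of $t\varphi+(1-t)\psi$ with $\varphi\in S_\A$, $\psi\in S_\B$ and $t\in[0,1]$, and that this set is already closed, being the continuous image of the compact set $[0,1]\times S_\A\times S_\B$ under the joint convex-combination map. Given such a point $y=t\varphi+(1-t)\psi$, I choose $\varphi'\in S_\A$ with $d(\psi,\varphi')\leq\tunnelreach{\tau}$, which is possible since $\psi\in S_\B$ and so $\dist(\psi,S_\A)\leq\tunnelreach{\tau}$ with the infimum attained by compactness of $S_\A$. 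Setting $y'=t\varphi+(1-t)\varphi'\in S_\A$ by convexity, I use that $d$ scales exactly along such segments, namely $d(t\varphi+(1-t)\psi,\,t\varphi+(1-t)\varphi')=(1-t)\,d(\psi,\varphi')$ (the two functionals differ by $(1-t)(\psi-\varphi')$), to obtain $d(y,y')=(1-t)\,d(\psi,\varphi')\leq\tunnelreach{\tau}$, whence $\dist(y,S_\A)\leq\tunnelreach{\tau}$.

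With the lemma in hand I would finish by a single triangle-inequality estimate. For any $x\in\StateSpace(\D)$, the definition of depth furnishes a point $y\in\co{S_\A\cup S_\B}$ with $d(x,y)\leq\tunneldepth{\tau}$ (the infimum being attained by compactness), and the lemma gives $\dist(y,S_\A)\leq\tunnelreach{\tau}$; therefore $\dist(x,S_\A)\leq\tunneldepth{\tau}+\tunnelreach{\tau}\leq 2\tunnellength{\tau}$. Taking the supremum over $x$ yields $\Haus{d}(\StateSpace(\D),S_\A)\leq 2\tunnellength{\tau}$, and the identical argument gives the same bound for $S_\B$, so $\tunnelextent{\tau}\leq 2\tunnellength{\tau}$. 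The main obstacle, and really the only point requiring care, is the geometric lemma: one must use both the convexity of each of $S_\A$ and $S_\B$ and the exact affine scaling of $d$ under the combination $t\varphi+(1-t)\psi$, since the naive estimate $d(y,\varphi)=(1-t)\,d(\varphi,\psi)$ toward a \emph{fixed} $\varphi\in S_\A$ does not see the reach and could be as large as the diameter. Everything else reduces to the bookkeeping of one-sided Hausdorff suprema, which is harmless precisely because $S_\A,S_\B\subseteq\StateSpace(\D)$.
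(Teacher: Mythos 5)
Your argument is correct, and it is essentially the proof that the paper delegates to \cite[Proposition 2.12]{Latremoliere14}: the lower bound by one-sided Hausdorff bookkeeping, and the upper bound via the observation that the {\mongekant} scales affinely along segments $t\varphi+(1-t)\psi \mapsto t\varphi+(1-t)\varphi'$, which is exactly the convexity-in-each-variable device the paper also uses in Proposition (\ref{tunnels-exist-prop}). Nothing is missing; you have simply written out the details the paper cites.
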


In \cite{Latremoliere13c,Latremoliere14}, we proved that the dual propinquity is a complete metric on the class of {\Lqcms s}; in particular it satisfies the triangle inequality and distance zero between two {\Lqcms s} implies that they are isometrically isomorphic. The same holds in our present context. We simply explain in the proof of our theorem how to modify some estimates in \cite{Latremoliere13c,Latremoliere14} to obtain the desired result.

\begin{notation}
The diameter of any metric space $(X,\mathrm{m})$ is denoted by $\diam{X}{\mathrm{m}}$.
\end{notation}

\begin{theorem}\label{prop-thm}
Let $F$ be an admissible function. Let $\mathcal{C}$ be some nonempty class of {\Qqcms{F}s} and let $\mathcal{T}$ be a $(\mathcal{C},F)$-appropriate class of $F$-tunnels. 

The $\mathcal{T}$-dual propinquity $\propinquity{\mathcal{T}}$ is a metric on $\mathcal{C}$, i.e. for all $(\A,\Lip_\A)$, $(\B,\Lip_\B)$ and $(\D,\Lip_\D)$ in $\mathcal{C}$:
\begin{enumerate}
\item We have:
\begin{equation*}
\propinquity{\mathcal{T}}((\A,\Lip_\A),(\B,\Lip_\B)) < \infty\text{,}
\end{equation*}
\item $\propinquity{\mathcal{T}}((\A,\Lip_\A),(\B,\Lip_\B)) = \propinquity{\mathcal{T}}((\B,\Lip_\B),(\A,\Lip_\A))$,
\item We have:
\begin{equation*}
\propinquity{\mathcal{T}}((\A,\Lip_\A),(\D,\Lip_\D)) \leq \propinquity{\mathcal{T}}((\A,\Lip_\A),(\B,\Lip_\B)) + \propinquity{\mathcal{T}}((\B,\Lip_\B),(\D,\Lip_\D))\text{,}
\end{equation*}
\item $\propinquity{\mathcal{T}}((\A,\Lip_\A),(\B,\Lip_\B)) = 0$ if and only if there exists an isometric isomorphism between $(\A,\Lip_\A)$ and $(\B,\Lip_\B)$,
\item if $\dist_q$ is the quantum Gromov-Hausdorff distance, then:
\begin{equation*}
\dist_q((\A,\Lip_\A),(\B,\Lip_\B)) \leq \propinquity{\mathcal{T}}((\A,\Lip_\A),(\B,\Lip_\B))\text{.}
\end{equation*}
\end{enumerate}

Moreover for all {\Qqcms{F}s} $(\A,\Lip_\A)$ and $(\B,\Lip_\B)$, we have:
\begin{equation*}
\propinquity{F}((\A,\Lip_\A),(\B,\Lip_\B)) \leq \max\left\{\diam{\StateSpace(\A)}{\Kantorovich{\Lip_\A}}, \diam{\StateSpace(\B)}{\Kantorovich{\Lip_\B}}\right\}\text{.}
\end{equation*}

Furthermore, if $\mathrm{GH}$ is the Gromov-Hausdorff distance, $(X,\mathrm{d}_X)$ and $(Y,\mathrm{d}_Y)$ are two compact metric spaces, and if $\mathrm{Lip}_X$ and $\mathrm{Lip}_Y$ are the Lipschitz seminorms associated, respectively, with the metrics $\mathrm{d}_X$ and $\mathrm{d_Y}$, then:
\begin{equation*}
\propinquity{F}\left((C(X),\mathrm{Lip}_X),(C(Y),\mathrm{Lip}_Y)\right)\leq \mathrm{GH}((X,\mathrm{d}_X),(Y,\mathrm{d}_Y)) \text{,}
\end{equation*}
where $C(X)$ and $C(Y)$ are the respective C*-algebras of $\C$-valued continuous functions over $X$ and $Y$.

Last, if $F$ is continuous and permissible, then $\propinquity{F}$ is a complete metric on the class of all {\Qqcms{F}s}.

\end{theorem}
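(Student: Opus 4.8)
The plan is to transport, essentially verbatim, the proofs of \cite{Latremoliere13c} and \cite{Latremoliere14}, which establish all of these properties for {\Lqcms s}, while keeping track of the $F$-quasi-Leibniz property at each step; the only genuinely new work occurs where a limiting object must be shown to remain $F$-quasi-Leibniz. I would first dispose of the assertions that do not involve the multiplicative structure. Finiteness (Assertion (1)) and the diameter bound are immediate from Proposition (\ref{tunnels-exist-prop}), which already produces an $F$-tunnel of extent at most $\max\{\diam{\StateSpace(\A)}{\Kantorovich{\Lip_\A}},\diam{\StateSpace(\B)}{\Kantorovich{\Lip_\B}}\}+\epsilon$. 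Symmetry (Assertion (2)) follows from the tunnel-reversal axiom in Definition (\ref{appropriate-def}) and the symmetry of the extent in its two surjections. The comparison (Assertion (5)) with Rieffel's quantum Gromov-Hausdorff distance, and the bound by the classical Gromov-Hausdorff distance in the commutative case, follow exactly as in \cite{Latremoliere13c}, since an $F$-tunnel carries all the embedding data those estimates require and the quasi-Leibniz constant never enters the argument.

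For the triangle inequality (Assertion (3)), I would invoke Theorem (\ref{tunnel-composition-thm}): given $F$-tunnels $\tau_1,\tau_2$ whose extents are within $\varepsilon$ of the respective propinquities, the glued tunnel $\tau_3$ is again an $F$-tunnel --- this is precisely the content of Theorem (\ref{tunnel-composition-thm}), where permissibility of $F$ (Assertion (\ref{permissible-eq})) guarantees that the gluing seminorm stays $F$-quasi-Leibniz \emph{without altering} $F$ --- and its extent is at most $\tunnelextent{\tau_1}+\tunnelextent{\tau_2}+\varepsilon$. Letting $\varepsilon\to 0$ yields the inequality. For the coincidence property (Assertion (4)), the reverse implication is the definition; for the forward implication I would replay the construction in \cite{Latremoliere14} that extracts a $*$-isomorphism from a sequence of $F$-tunnels of vanishing extent, the identity $\Lip_\B\circ\pi=\Lip_\A$ being recovered from lower semicontinuity of the Lip-norms together with the earlier cited theorem of Rieffel. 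Since this construction only manipulates states and $*$-morphisms, the quasi-Leibniz bound is irrelevant to it.

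The substance of the theorem is completeness. I would take a $\propinquity{F}$-Cauchy sequence $(\A_n,\Lip_n)_{n\in\N}$, pass to a subsequence with $\propinquity{F}((\A_n,\Lip_n),(\A_{n+1},\Lip_{n+1}))<2^{-n}$, and select $F$-tunnels $\tau_n$ of extent $<2^{-n}$ between consecutive terms. Following \cite{Latremoliere14}, these tunnels let me embed every $\StateSpace(\A_n)$ into one compact metric space so that the images form a Hausdorff-Cauchy sequence; its Hausdorff limit is the candidate limit state space, and the accompanying inductive data reconstruct a unital C*-algebra $\A_\infty$ together with a lower semicontinuous seminorm $\Lip_\infty$ obtained as a limit along the tunnel tower. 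That $(\A_\infty,\Lip_\infty)$ is a compact quantum metric space and that the subsequence (hence the whole sequence) converges to it follows as in \cite{Latremoliere14}.

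The hard part is checking that $\Lip_\infty$ is $F$-quasi-Leibniz, and this is the sole place where the continuity hypothesis on $F$ is used. Because $\Lip_\infty(a)$ is realized as a limit of values of tunnel Lip-norms along coherent lifts of $a$, establishing $\Lip_\infty(\Jordan{a}{b})\leq F(\|a\|_{\A_\infty},\|b\|_{\A_\infty},\Lip_\infty(a),\Lip_\infty(b))$ amounts to passing to the limit in the $F$-quasi-Leibniz inequalities that hold along the tunnels, whose right-hand sides are $F$ evaluated at arguments converging to $(\|a\|_{\A_\infty},\|b\|_{\A_\infty},\Lip_\infty(a),\Lip_\infty(b))$. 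Monotonicity of $F$ does not by itself close this estimate; what I need is that $F$ at the limit of its arguments equals the limit of its values, which is exactly continuity of $F$. In the Leibniz case this was transparent because $L$ is an explicit bilinear expression, so no continuity hypothesis appeared; for a general permissible $F$ it is the essential new ingredient, and I expect it to be the main obstacle. Once $\Lip_\infty$ is known to be $F$-quasi-Leibniz, $(\A_\infty,\Lip_\infty)$ belongs to the class of all {\Qqcms{F}s}, and completeness follows.
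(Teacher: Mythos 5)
Your outline follows the paper's own proof almost exactly: finiteness and the diameter bound from Proposition (\ref{tunnels-exist-prop}), symmetry from tunnel reversal, the triangle inequality from Theorem (\ref{tunnel-composition-thm}), and completeness by rerunning the construction of \cite{Latremoliere13c,Latremoliere14} with continuity of $F$ used precisely to pass to the limit in the quasi-Leibniz inequality for the limit seminorm. That last diagnosis is exactly where the paper also locates the new work (its treatment of \cite[Lemma 6.24]{Latremoliere13c}).

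There is, however, one misstatement worth correcting: your claim that the forward direction of the coincidence property ``only manipulates states and $*$-morphisms,'' so that the quasi-Leibniz bound is irrelevant to it. In the construction you propose to replay, one does \emph{not} have a $*$-morphism a priori; one builds a unital order/metric isomorphism as a limit along target sets $\targetsettunnel{\tau}{a}{l}$ of tunnels of vanishing extent, and the multiplicativity of that limit map is established precisely by showing that target sets are approximately stable under Jordan and Lie products. That stability is where the (quasi-)Leibniz property of the tunnel Lip-norms enters: the paper replaces the Leibniz estimate of \cite[Proposition 4.8]{Latremoliere13b} by the inclusion
\begin{equation*}
\Jordan{b}{b'} \in \targetsettunnel{\tau}{\Jordan{a}{a'}}{F(\|a\|_\A+2 l \tunnelextent{\tau}, \|a'\|_\A + 2 l \tunnelextent{\tau}, l, l)}\text{,}
\end{equation*}
i.e.\ Expression (\ref{prop-thm-eq2}), and then checks that the altered constant, being independent of the choice of lifts, does not disturb the proof of \cite[Theorem 4.16]{Latremoliere13b}. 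So the quasi-Leibniz hypothesis is genuinely used in Assertion (4), not only in completeness; your argument would still close once you carry out the replay, but only because this modification is available, and it should be stated rather than dismissed.
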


\begin{proof}
The proofs are given in \cite{Latremoliere13b,Latremoliere14}, for the case when $F : x,y,l_x,l_y \mapsto x l_y + y l_x$. The more general setting of this paper is however an immediate consequence of the work in \cite{Latremoliere13b,Latremoliere14}, once we make the following simple observations.
\begin{enumerate}
\item The dual propinquity is finite by Definition (\ref{appropriate-def}). Moreover, Proposition (\ref{tunnels-exist-prop}) shows that, if we simply work with all $F$-tunnels, we obtain the desired upper bound for the propinquity between two {\Qqcms{F}s}.
\item The dual propinquity is symmetric by Definition (\ref{appropriate-def}) and since the extent of a tunnel equals to the extent of its inverse.
\item Using Theorem (\ref{tunnel-composition-thm}) in place of \cite[Theorem 3.1]{Latremoliere14}, we can deduce that the dual propinquity satisfies the triangle inequality on the class of {\Qqcms{F}s} as in \cite[Theorem 3.7]{Latremoliere14}.
\item \cite[Lemma 4.2, Proposition 4.4, Corollary 4.5, Proposition 4.6]{Latremoliere13b} remain unchanged in our setting.
\item \cite[Proposition 2.12]{Latremoliere14} remains unchanged as well.
\item The estimate in \cite[Proposition 4.8]{Latremoliere13b} should be modified very slightly. Let $\tau = (\D,\Lip_\D,\pi_\A,\pi_\B)$ be an $F$-tunnel from $(\A,\Lip_\A)$ to $(\B,\Lip_\B)$. We recall from \cite[Definition 4.1]{Latremoliere13b} that for $a\in\sa{\A}$ with $\Lip_\A(a)<\infty$ and for all $l \geq \Lip_\A(a)$, the \emph{target set} $\targetsettunnel{\tau}{a}{l}$ is the set:
\begin{equation*}
\targetsettunnel{\tau}{a}{l} = \left\{ \pi_\B(d) : d \in \sa{\D}, \pi_\A(d) = a, \Lip_\D(d) \leq l \right\}\text{.}
\end{equation*}

If $a,a' \in \sa{\A}$ with $\max\{\Lip_\A(a),\Lip_\A(a')\} \leq l$ for some $l > 0$, and if $d,d'\in\sa{\D}$ with $\pi_\A(d)=a$, $\pi_\A(d') = a'$ and $\max\{\Lip_\D(d),\Lip_\D(d')\} \leq l$, then by \cite[Proposition 4.6]{Latremoliere13b}:
\begin{equation*}
\|d\|_\D \leq \|a\|_\A + 2 l \tunnelextent{\tau} \text{ and }\|d'\|_\D \leq \|a'\|_\A + 2 l \tunnelextent{\tau}\text{.}
\end{equation*}
Consequently, using the $F$-quasi-Leibniz property of $\Lip_\D$:
\begin{equation}\label{prop-thm-eq1}
\begin{split}
\Lip_\D\left(\Jordan{d}{d'}\right) &\leq F(\|d\|_\D, \|d'\|_\D, \Lip_\D(d), \Lip_\D(d'))\\
&\leq F(\|a\|_\A+ 2 l \tunnelextent{\tau}, \|a'\|_\A + 2 l \tunnelextent{\tau}, l, l) {.}
\end{split}
\end{equation}

Thus one may conclude that, if $b \in \targetsettunnel{\tau}{a}{l}$ and $b'\in\targetsettunnel{\tau}{a'}{l}$ then:
\begin{equation}\label{prop-thm-eq2}
\Jordan{b}{b'} \in \targetsettunnel{\tau}{\Jordan{a}{a'}}{F(\|a\|_\A+2 l \tunnelextent{\tau}, \|a'\|_\A + 2 l \tunnelextent{\tau}, l, l)}\text{.}
\end{equation}
A similar modification of \cite[Proposition 4.8]{Latremoliere13b} holds for the Lie product. Moreover, we observe that we could substitute the tunnel length to its extent in the above inequalities.

\item One may now check that the proof of \cite[Theorem 4.16]{Latremoliere13b} carries unchanged, except with the use of Expression (\ref{prop-thm-eq2}) in lieu of \cite[Corollary 4.14, Proposition 4.8]{Latremoliere13b}. It is easy to verify that the change in the constant involved in the target set does not impact the proof at all: the key is that this value does not depend on the choices of lifts. Thus \cite[Theorem 4.16]{Latremoliere13b} holds.

\item \cite[Theorem 5.5, Corollary 5.6, Corollary 5.7]{Latremoliere13c} all carry over unchanged thanks to Assertion (\ref{permissible-eq}) of Definition (\ref{permissible-def}). We will make a brief comment after this theorem about the quantum propinquity and {\Qqcms{F}s}.

\item For completeness, we assume that $F$ is \emph{continuous}, in addition to being permissible. \cite[Section 6]{Latremoliere13b} can be entirely rewritten with $F$-quasi-Lip-norms. The only place where we must be careful is in the proof of \cite[Lemma 6.24]{Latremoliere13c}. Using the notations of \cite[Hypothesis 6.4, 6.5]{Latremoliere13c}, we note that the seminorms $\mathrm{S}_N$ in \cite[Lemma 6.24]{Latremoliere13c} is $F$-quasi-Leibniz, and thus, with the seminorm $\mathrm{Q}$ of \cite[Lemma 6.24]{Latremoliere13c} satisfies, for all $f, g \in \alg{F}$ and for all $\varepsilon > 0$:
\begin{equation*}
\mathrm{Q}(\Jordan{f}{g}) \leq F\left(\|f\|_{\alg{F}}+\varepsilon, \|g\|_{\alg{F}}+\varepsilon, \mathrm{Q}(f)+\varepsilon, \mathrm{Q}(g)+\varepsilon \right)\text{.}
\end{equation*}
Using the continuity of $F$, we thus get:
\begin{equation*}
\mathrm{Q}(\Jordan{f}{g}) \leq F\left(\|f\|_{\alg{F}}, \|g\|_{\alg{F}}, \mathrm{Q}(f), \mathrm{Q}(g) \right)\text{.}
\end{equation*}
The same process applies to the Lie product. Thus $\mathrm{Q}$ is indeed a {\Qqcms{F}}. Thus, a Cauchy sequence of {\Qqcms{F}s} converges, for the dual propinquity, to a {\Qqcms{F}}.
\end{enumerate}

This concludes our proof.
\end{proof}

\begin{remark}
We emphasize that Theorem (\ref{prop-thm}) requires that we first fix a permissible function $F$; if we were working on the class of all {\gQqcms s}, then the estimates needed for Theorem (\ref{prop-thm}) would not be valid any longer, since we would essentially lose control of the quasi-Leibniz property.
\end{remark}

We conclude this section with a brief remark on extending the quantum propinquity to {\gQqcms s}. The quantum propinquity \cite{Latremoliere13} is a specialized version of the dual propinquity, where for any two {\Lqcms s} $(\A,\Lip_\A)$, $(\B,\Lip_\B)$, all the tunnels from $(\A,\Lip_\A)$ to $(\B,\Lip_\B)$ are of the form:
\begin{equation*}
(\A\oplus\B, \Lip, \pi_\A,\pi_\B)
\end{equation*}
where $\pi_\A$ and $\pi_\B$ are the canonical surjections onto $\A$ and $\B$, respectively, and for all $(a,b)\in\A\oplus\B$, the Lip-norm $\Lip$ is of the form:
\begin{equation}\label{qp-lip-eq}
\Lip(a,b) = \max\left\{\Lip_\A(a),\Lip_\B(b),\frac{1}{\lambda}\|\rho_\A(a)\omega - \omega\rho_\B(b)\|_\D \right\}
\end{equation}
where $\D$ is some C*-algebra, $\rho_\A:\A\hookrightarrow\D$ and $\rho_\B:\B\hookrightarrow \D$ are unital *-monomorphisms, and  $\omega\in\D$ is an element of $\D$ such that there exists at least one state $\varphi \in\StateSpace(\D)$ such that:
\begin{equation*}
\varphi((\unit_\D-\omega)(\unit_\D-\omega)^\ast) = \varphi((\unit_\D-\omega)^\ast(\unit_\D-\omega)) = 0\text{.}
\end{equation*}
Of course, the key question is to choose $\lambda > 0$ appropriately in Expression (\ref{qp-lip-eq}). This intriguing question is in fact the entire purpose of \cite{Latremoliere13}, which presents the construction of the quantum propinquity in quite a different manner.

The quadruples $\gamma = (\D,\omega,\rho_\A,\rho_\B)$ are called bridges in \cite{Latremoliere13}. The quantum propinquity was our first construction of a metric adapted to {\Lqcms s}, and the particular form of tunnels involved provide some potential algebraic advantages, such as the construction of matricial versions of these Lip-norms. As any specialization, the quantum propinquity is stronger than the dual propinquity in general. The key aspect of the construction of the quantum propinquity is to associate a length $\bridgelength{\gamma}{\Lip_\A,\Lip_\B}$ to a bridge $\gamma$, and then $\lambda$ in Expression (\ref{qp-lip-eq}) may be chosen as $\bridgelength{\gamma}{\Lip_\A,\Lip_\B} + \varepsilon$ for any $\varepsilon > 0$. With such a choice, the length of the tunnel given by Expression (\ref{qp-lip-eq}) is no more than $\lambda$. In particular, the quantum propinquity provides a useful technique to compute bounds for the dual propinquity (see for instance \cite{Latremoliere13b}).

Assertion (\ref{permissible-eq}) of Definition (\ref{permissible-def}) proves that Expression (\ref{qp-lip-eq}) leads to a $F$-quasi-Lip-norm if $\Lip_\A$, $\Lip_\B$ are $F$-quasi-Lip-norms. One can then verify that the quantum propinquity extends to a metric to the class of {\Qqcms{F}s} for any permissible $F$, without any modification to our notion of bridge.

\section{Compactness for some classes of Finite Dimensional {\gQqcms s}}

We begin this section with a characterization of quasi-Leibniz seminorms based upon their unit balls. We recall that a convex subset $S\subseteq \A$ is \emph{balanced} if $\{\lambda x : x\in S\} \subseteq S$ for all $\lambda\in[-1,1]$. If $\Lip$ is a seminorm on some $\R$-vector space $A$ then $\{x\in A : \Lip(x)\leq 1\}$ is convex and balanced. Conversely, if $S$ is a convex and balanced subset of $A$, then the \emph{gauge seminorm} or \emph{Minkowsky functional} $\Lip$ associated with $S$ is the seminorm on $A$ defined by $\Lip(x) = \inf\{\lambda > 0 : x \in \lambda S \}$ for all $x\in A$. In particular, if $A$ is a normed vector space, then these constructions establish a bijection between the set of closed, convex balanced subsets of $A$ and the seminorms on $A$ which are lower semi-continuous with respect to the norm of $A$. 

\begin{lemma}\label{quasi-Leibniz-lemma}
Let $\A$ be a C*-algebra. Let $\alg{S}$ be a closed, balanced convex subset of $\sa{\A}$. Let $F:[0,\infty)^4\rightarrow[0,\infty)$ such that:
\begin{enumerate}
\item $F$ is a permissible function,
\item for all $(x,y,l_x,l_y)\in [0,\infty)^4$ and for all $\lambda,\mu\geq 0$, we have:
\begin{equation*}
F(\lambda x, \mu y, \lambda l_x, \mu l_y) \geq \lambda\mu F(x,y,l_x,l_y)\text{.}
\end{equation*}
\item $F(x,y,\cdot,\cdot)$ is continuous for all $x,y \in [0,\infty)$. 
\end{enumerate}

The Minkowsky functional $\Lip$ of $\alg{S}$ is $F$-quasi-Leibniz if and only if for all $a,b \in \alg{S}$:
\begin{equation*}
\Jordan{a}{b} \in F(\|a\|_\A,\|b\|_\A,1,1) \alg{S}
\end{equation*}
and
\begin{equation*}
\Lie{a}{b} \in F(\|a\|_\A,\|b\|_\A,1,1)\alg{S}\text{.}
\end{equation*}
\end{lemma}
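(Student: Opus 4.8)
The plan is to unwind the Minkowski-functional correspondence in both directions, using the scaling hypothesis (2) to reduce the quasi-Leibniz inequality for general elements to the membership condition for elements of the unit ball $\alg{S}$. I would prove the two implications separately, treating the Jordan product explicitly and remarking that the Lie product is handled identically.

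For the forward direction, assume $\Lip$ is $F$-quasi-Leibniz and fix $a,b\in\alg{S}$. Since $\alg{S}$ is the closed unit ball of $\Lip$, we have $\Lip(a)\leq 1$ and $\Lip(b)\leq 1$. The quasi-Leibniz inequality gives $\Lip(\Jordan{a}{b})\leq F(\|a\|_\A,\|b\|_\A,\Lip(a),\Lip(b))$, and since $F$ is nondecreasing in its last two arguments (permissibility, assertion (1) of Definition (\ref{permissible-def})), this is at most $F(\|a\|_\A,\|b\|_\A,1,1)$. Writing $M=F(\|a\|_\A,\|b\|_\A,1,1)$, the inequality $\Lip(\Jordan{a}{b})\leq M$ says exactly that $\Jordan{a}{b}\in M\,\alg{S}$, because $\alg{S}=\{x:\Lip(x)\leq 1\}$ is the closed unit ball and $\{x:\Lip(x)\leq M\}=M\,\alg{S}$. (Here I use that $\alg{S}$ is closed, so that $\Lip$ is lower semi-continuous and its sublevel sets are exactly the scaled copies of $\alg{S}$.) This gives the membership condition.

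For the converse, suppose the membership condition holds for all $a,b\in\alg{S}$, and fix arbitrary $a,b\in\dom{\Lip}$ with, say, $\Lip(a),\Lip(b)>0$; the degenerate cases where a Lip-norm vanishes I would dispose of at the end by a continuity/limiting argument using hypothesis (3). Set $\lambda=\Lip(a)$ and $\mu=\Lip(b)$, and put $a'=\lambda^{-1}a$, $b'=\mu^{-1}b$, so that $\Lip(a')=\Lip(b')=1$, i.e. $a',b'\in\alg{S}$. Applying the membership hypothesis to $a',b'$ yields $\Jordan{a'}{b'}\in F(\|a'\|_\A,\|b'\|_\A,1,1)\,\alg{S}$, that is, $\Lip(\Jordan{a'}{b'})\leq F(\|a'\|_\A,\|b'\|_\A,1,1)$. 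Now $\Jordan{a}{b}=\lambda\mu\,\Jordan{a'}{b'}$ by bilinearity of the Jordan product, so $\Lip(\Jordan{a}{b})=\lambda\mu\,\Lip(\Jordan{a'}{b'})\leq \lambda\mu\,F(\lambda^{-1}\|a\|_\A,\mu^{-1}\|b\|_\A,1,1)$. The scaling hypothesis (2), applied with the substitution $x=\lambda^{-1}\|a\|_\A$, $y=\mu^{-1}\|b\|_\A$, $l_x=\lambda^{-1}\Lip(a)=1$, $l_y=\mu^{-1}\Lip(b)=1$ and scalars $\lambda,\mu$, gives $F(\|a\|_\A,\|b\|_\A,\Lip(a),\Lip(b))\geq \lambda\mu\,F(\lambda^{-1}\|a\|_\A,\mu^{-1}\|b\|_\A,1,1)$, which is precisely the bound we need:
\begin{equation*}
\Lip(\Jordan{a}{b})\leq F(\|a\|_\A,\|b\|_\A,\Lip(a),\Lip(b))\text{.}
\end{equation*}

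The main obstacle is the boundary behavior when one of $\Lip(a),\Lip(b)$ is zero, since the normalization $a'=\lambda^{-1}a$ breaks down and the scaling inequality (2) becomes vacuous at $\lambda=0$ or $\mu=0$. If, say, $\Lip(a)=0$ then $a\in\R\unit_\A$, and I expect $\Jordan{a}{b}$ to be a scalar multiple of $b$, forcing $\Lip(\Jordan{a}{b})=\|a\|_\A\Lip(b)\leq F(\|a\|_\A,\|b\|_\A,0,\Lip(b))$ directly from permissibility (Assertion (\ref{permissible-eq})) after identifying the Jordan product of a central element; the Lie product $\Lie{a}{b}$ then vanishes, so its bound is immediate. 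The cleaner route, and the one I would ultimately write, is to recover the general case from the strict case by continuity: apply the inequality with $\Lip(a)$ replaced by $\Lip(a)+\varepsilon$ and $\Lip(b)$ by $\Lip(b)+\varepsilon$ where this makes sense, and let $\varepsilon\to 0^+$, invoking hypothesis (3) that $F(x,y,\cdot,\cdot)$ is continuous to pass to the limit. This is exactly the role hypothesis (3) plays, and it is why it is listed as a separate assumption alongside the scaling condition.
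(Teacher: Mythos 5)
Your proof is correct and follows essentially the same route as the paper: monotonicity of $F$ plus the gauge/unit-ball correspondence (using closedness of $\alg{S}$) for the forward direction, normalization combined with the scaling hypothesis (2) for the generic converse case, and a limiting argument via the partial continuity hypothesis (3) for the degenerate case $\Lip(a)\Lip(b)=0$ --- which is exactly how the paper proceeds, replacing the vanishing Lip-norm value by an arbitrary $\lambda>0$ and letting $\lambda\to 0^+$. One caveat: your first suggested route for the degenerate case, asserting that $\Lip(a)=0$ forces $a\in\R\unit_\A$, is not available here, since $\alg{S}$ is an arbitrary closed balanced convex subset of $\sa{\A}$ and the null space of its Minkowsky functional need not be $\R\unit_\A$; but as you discard that route in favor of the continuity argument, the proof stands.
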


\begin{proof}
Assume that $\Lip$ is a $F$-quasi-Leibniz seminorm. Let $a,b \in \alg{S}$. If $a=0$ or $b=0$ then $\Jordan{a}{b} = \Lie{a}{b} = 0$ and thus:
\begin{equation*}
\Jordan{a}{b},\Lie{a}{b} \in 0\cdot \alg{S} \subseteq F(\|a\|_\A,\|b\|_\A,1,1)\alg{S}
\end{equation*}
since $0\in\alg{S}$. Henceforth, assume $a, b \not = 0$.

We observe, as $F$ is a permissible function, $\Lip(a),\Lip(b)\leq 1$ and $\Lip$ is $F$-quasi-Leibniz:
\begin{equation*}
\Lip(\Jordan{a}{b}) \leq F(\|a\|_\A,\|b\|_\A,\Lip(a),\Lip(b))\leq F(\|a\|_\A,\|b\|_\A,1,1)\text{.}
\end{equation*}
Since $a$ and $b$ are nonzero, $F(\|a\|_\A,\|b\|_\A,1,1) \geq \|a\|_\A + \|b\|_\A > 0$, and thus $\Lip\left(\frac{1}{F(a,b,1,1)}\Jordan{a}{b}\right) \leq 1$, i.e. $\frac{1}{F(\|a\|_\A,\|b\|_\A,1,1)}\Jordan{a}{b} \in \alg{S}$, so:
\begin{equation*}
\Jordan{a}{b} \in F\left(\|a\|_\A,\|b\|_\A,1,1\right)\alg{S}\text{.}
\end{equation*}

We obtain a similar result for the Lie product, and thus our condition is necessary.

Assume conversely, that for all $a,b\in\alg{S}$ we have:
\begin{equation*}
\Jordan{a}{b}, \Lie{a}{b} \in F\left(\|a\|_\A,\|b\|_\A,1,1\right)\alg{S}\text{.}
\end{equation*}

Let $a,b \in \dom{\Lip}$. Assume first that $\Lip(a)\Lip(b) > 0$. We then have $\Lip(a)^{-1}a, \Lip(b)^{-1}b \in \alg{S}$ and thus:
\begin{equation*}
\frac{1}{\Lip(a)\Lip(b)}\Jordan{a}{b} = \Jordan{\frac{1}{\Lip(a)}a}{\frac{1}{\Lip(b)}b} \in F\left(\frac{\|a\|_\A}{\Lip(a)},\frac{\|b\|_\A}{\Lip(b)},1,1\right) \alg{S}
\end{equation*}
i.e. 
\begin{equation*}
\Lip\left(\frac{1}{\Lip(a)\Lip(b)}\Jordan{a}{b} \right) \leq F\left(\frac{\|a\|_\A}{\Lip(a)},\frac{\|b\|_\A}{\Lip(b)},1,1\right)\text{.}
\end{equation*}

Consequently:
\begin{equation*}
\begin{split}
\Lip(\Jordan{a}{b}) &= \Lip(a)\Lip(b)\Lip\left(\frac{1}{\Lip(a)\Lip(b)}\Jordan{a}{b}\right)\\
&\leq \Lip(a)\Lip(b) F\left(\frac{\|a\|_\A}{\Lip(a)}, \frac{\|b\|_\A}{\Lip(b)}, 1,1\right)\\
&\leq F\left(\|a\|_\A, \|b\|_\A, \Lip(a), \Lip(b)\right) \text{.}\\
\end{split}
\end{equation*}

Now, assume $\Lip(a) = 0$ and $\Lip(b) > 0$. Let $\lambda > 0$. Since $\Lip(\lambda^{-1}a) = 0$ we have $\lambda^{-1}a \in \alg{S}$. Of course, $\Lip(b)^{-1}b \in \alg{S}$. Hence:
\begin{align*}
\Jordan{\left(\lambda^{-1} a\right)}{ \left(\Lip^{-1}(b) b\right)} &\in F\left(\lambda^{-1}\|a\|_\A,\frac{\|b\|_\A}{\Lip(b)},1,1\right)\alg{S} \\ \intertext{so}
\Lip\left(\Jordan{\left(\lambda^{-1} a\right)}{ \left(\Lip^{-1}(b) b\right) }\right) &\leq F\left(\lambda^{-1}\|a\|_\A,\frac{\|b\|_\A}{\Lip(b)},1,1\right)
\end{align*}
and thus:
\begin{equation*}
\Lip(\Jordan{a}{b}) \leq F\left(\|a\|_\A,\|b\|_\B, \lambda, \Lip(b)\right) \text{.}
\end{equation*}

As $\lambda > 0$ is arbitrary, by partial continuity of $F$, we conclude that:
\begin{equation*}
\Lip\left(\Jordan{a}{b}\right) \leq F(\|a\|_\A,\|b\|_\A,0,\Lip(b)) = F(\|a\|_\A,\|b\|_\A,\Lip(a),\Lip(b))\text{,}
\end{equation*}
as desired. The case $\Lip(a)>0$, $\Lip(b) = 0$ is dealt with symmetrically. 

Last, if $\Lip(a)=\Lip(b) = 0$, then again, for all $\mu, \lambda > 0$, we have:
\begin{equation*}
\Jordan{\left(\lambda^{-1} a\right)}{\left( \mu^{-1} b\right)} \in F\left(\lambda^{-1}\|a\|_\A,\mu^{-1}\|b\|_\B,1,1\right)\alg{S}
\end{equation*}
and thus $\Lip(\Jordan{a}{b}) \leq F(\|a\|_\A,\|b\|_\A,\lambda,\mu)$. Again, since $\lambda, \mu$ are arbitrary positive numbers and by partial continuity of $F$, we conclude that:
\begin{equation*}
\Lip(\Jordan{a}{b}) \leq F(\|a\|_\A,\|b\|_\A,0,0) = F(\|a\|_\A,\|b\|_\A,\Lip(a),\Lip(b)) \text{.}
\end{equation*}

The proof is analogous for the Lie product. This concludes our lemma.
\end{proof}

\begin{corollary}\label{quasi-Leibniz-corollary}
Let $\A$ be a C*-algebra. Let $\alg{S}$ be a closed, balanced convex subset of $\sa{\A}$. Let $C\geq 1$ and $D\geq 0$. The Minkowsky functional $\Lip$ of $\alg{S}$ is $(C,D)$-quasi-Leibniz if and only if for all $a,b \in \alg{S}$:
\begin{equation*}
\Jordan{a}{b} \in \left[ C\left(\|a\|_\A+\|b\|_\A\right) + D\right]\alg{S}
\end{equation*}
and
\begin{equation*}
\Lie{a}{b} \in \left[ C\left(\|a\|_\A+\|b\|_\A\right) + D\right]\alg{S}\text{.}
\end{equation*}
\end{corollary}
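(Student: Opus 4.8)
The plan is to obtain this corollary as the special case $F = F_{C,D}$ of Lemma (\ref{quasi-Leibniz-lemma}), where $F_{C,D}(x,y,l_x,l_y) = C(x l_y + y l_x) + D l_x l_y$. To invoke that lemma, I would first check that $F_{C,D}$ satisfies its three hypotheses, and then simply evaluate $F_{C,D}(\|a\|_\A,\|b\|_\A,1,1)$ to see that the membership conditions of the lemma reduce verbatim to those stated in the corollary.

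For the hypotheses: permissibility (hypothesis (1)) follows because $C \geq 1$ and $D \geq 0$ make $F_{C,D}$ a nonnegative, nondecreasing function of its four arguments, and because $C \geq 1$ together with $D l_x l_y \geq 0$ gives $x l_y + y l_x \leq C(x l_y + y l_x) + D l_x l_y$, which is exactly inequality (\ref{permissible-eq}). The scaling hypothesis (2) in fact holds with equality: substituting $(\lambda x, \mu y, \lambda l_x, \mu l_y)$ into $F_{C,D}$ pulls out a common factor $\lambda\mu$, so $F_{C,D}(\lambda x,\mu y,\lambda l_x,\mu l_y) = \lambda\mu F_{C,D}(x,y,l_x,l_y)$. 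The partial continuity hypothesis (3) is immediate, since for fixed $x,y$ the map $(l_x,l_y)\mapsto C(x l_y + y l_x)+D l_x l_y$ is a polynomial, hence continuous.

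With the hypotheses verified, the computation $F_{C,D}(\|a\|_\A,\|b\|_\A,1,1) = C(\|a\|_\A + \|b\|_\A) + D$ shows that the two displayed membership conditions in Lemma (\ref{quasi-Leibniz-lemma}) become precisely $\Jordan{a}{b} \in [C(\|a\|_\A+\|b\|_\A)+D]\alg{S}$ and $\Lie{a}{b} \in [C(\|a\|_\A+\|b\|_\A)+D]\alg{S}$, so the equivalence with $(C,D)$-quasi-Leibniz-ness of the Minkowski functional of $\alg{S}$ follows at once. I expect essentially no obstacle here beyond bookkeeping; the only point requiring a moment's attention is confirming that the scaling hypothesis (2) is genuinely met by $F_{C,D}$, which, as noted, holds with equality and therefore poses no difficulty.
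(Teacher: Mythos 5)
Your proposal is correct and is exactly the paper's argument: the paper proves the corollary by applying Lemma (\ref{quasi-Leibniz-lemma}) with $F:(x,y,l_x,l_y)\mapsto C(xl_y+yl_x)+Dl_xl_y$, and your verification of the lemma's three hypotheses and the evaluation $F(\|a\|_\A,\|b\|_\A,1,1)=C(\|a\|_\A+\|b\|_\A)+D$ are the (routine) details the paper leaves implicit.
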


\begin{proof}
Apply Lemma (\ref{quasi-Leibniz-lemma}) with $F: (x,y,l_x,l_y) \in [0,\infty)^4 \mapsto C(x l_y + y l_x) + D l_x l_y$.
\end{proof}

Corollary (\ref{quasi-Leibniz-corollary}) demonstrates the application of our work to a special case of {\gQqcms s} which will play a central role in the last section of this paper. Other possible examples of applications of Lemma (\ref{quasi-Leibniz-lemma}) include such functions as $(x,y,l_x,l_y)\mapsto C\left(x^p l_y^p + y^p l_x^p + D l_x^p l_y^p \right)^{\frac{1}{p}}$ for $p\geq 1$, $D\geq 0$ and $C\geq 2^{1-\frac{1}{p}}$, and the sum of such functions with the permissible functions of Corollary (\ref{quasi-Leibniz-corollary}), for instance.

One important use for Lemma (\ref{quasi-Leibniz-lemma}) is to help determine whether a seminorm whose unit ball is constructed as the Hausdorff limit of the unit balls of quasi-Leibniz Lip-norms possesses a quasi-Leibniz property. The following result will help with these considerations.

\begin{lemma}\label{Haus-limit-lemma}
Let $\A$ be a C*-algebra, and let $F:[0,\infty)^4\mapsto [0,\infty)$ be a permissible function such that $F(\cdot,\cdot,1,1)$ is continuous. For all $n\in\N$, let $\alg{L}_n$ be a closed, balanced convex subset of $\sa{\A}$ such that, for all $a,b \in \alg{L}_n$ we have:
\begin{equation*}
\Jordan{a}{a'}, \Lie{a}{a'} \in F\left(\|a\|_\A,\|b\|_\A,1,1\right)\alg{L}_n\text{.}
\end{equation*}
If $\left(\alg{L}_n\right)_{n\in\N}$ converges to some closed set $\alg{L}$ for the Hausdorff distance associated with the norm $\|\cdot\|_\A$, then $\alg{L}$ is a balanced convex subset of $\sa{\A}$ such that, for all $a,a'\in \alg{L}$, we also have:
\begin{equation*}
\Jordan{a}{a'}, \Lie{a}{a'} \in F\left(\|a\|_\A, \|b\|_\A,1,1\right)\alg{L}\text{.}
\end{equation*}
\end{lemma}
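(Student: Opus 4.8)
We have closed, balanced convex sets $\alg{L}_n \subseteq \sa{\A}$, each satisfying a Jordan-Lie containment property with respect to a permissible $F$ (with $F(\cdot,\cdot,1,1)$ continuous). They converge to a closed set $\alg{L}$ in the Hausdorff distance $\Haus{\|\cdot\|_\A}$. We must show $\alg{L}$ is again balanced convex and satisfies the same containment.

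**Two parts to prove.**

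First part: $\alg{L}$ is balanced convex. This should be routine. Balancedness and convexity are closed conditions preserved under Hausdorff limits. Given $a, a' \in \alg{L}$, by Hausdorff convergence pick $a_n, a'_n \in \alg{L}_n$ with $a_n \to a$, $a'_n \to a'$. For $t \in [0,1]$, $ta_n + (1-t)a'_n \in \alg{L}_n$ and converges to $ta + (1-t)a'$; since $\alg{L}$ is closed and is the Hausdorff limit, this limit lies in $\alg{L}$. Balancedness follows the same way with scalar $\lambda \in [-1,1]$ times $a_n$.

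**Plan for the containment.** The plan is to take $a, a' \in \alg{L}$, approximate them by $a_n, a'_n \in \alg{L}_n$ with $a_n \to a$ and $a'_n \to a'$ in norm (using that $\alg{L}_n \to \alg{L}$ for $\Haus{\|\cdot\|_\A}$). By hypothesis, $\Jordan{a_n}{a'_n} \in F(\|a_n\|_\A, \|a'_n\|_\A, 1, 1)\alg{L}_n$; that is, there exist $c_n \in \alg{L}_n$ with $\Jordan{a_n}{a'_n} = F(\|a_n\|_\A, \|a'_n\|_\A, 1, 1)\,c_n$. The goal is to pass to the limit and land $\Jordan{a}{a'}$ inside $F(\|a\|_\A, \|a'\|_\A, 1, 1)\alg{L}$.

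**The main obstacle and how I would handle it.** The delicate point is that the scaling coefficient $F(\|a_n\|_\A, \|a'_n\|_\A, 1, 1)$ changes with $n$, so one must simultaneously control the convergence of this coefficient and the convergence of the witnesses $c_n$. Since $a_n \to a$ and $a'_n \to a'$, the norms converge, and by continuity of $F(\cdot,\cdot,1,1)$ the coefficients $\kappa_n := F(\|a_n\|_\A, \|a'_n\|_\A, 1, 1)$ converge to $\kappa := F(\|a\|_\A, \|a'\|_\A, 1, 1)$. Meanwhile $\Jordan{a_n}{a'_n} \to \Jordan{a}{a'}$ because the Jordan product is norm-continuous. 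If $\kappa > 0$, then for large $n$ we have $\kappa_n > 0$, and $c_n = \kappa_n^{-1}\Jordan{a_n}{a'_n} \to \kappa^{-1}\Jordan{a}{a'}$; since each $c_n \in \alg{L}_n$ and $\alg{L}_n \to \alg{L}$ in Hausdorff distance with $\alg{L}$ closed, the limit $\kappa^{-1}\Jordan{a}{a'}$ lies in $\alg{L}$, which is exactly $\Jordan{a}{a'} \in \kappa\alg{L}$. The remaining case $\kappa = 0$ forces $\|a\|_\A = \|a'\|_\A = 0$ (by the permissibility bound $\|a\|_\A + \|a'\|_\A \leq F(\|a\|_\A, \|a'\|_\A, 1, 1)$), hence $a = a' = 0$ and $\Jordan{a}{a'} = 0 \in 0\cdot\alg{L} = \kappa\alg{L}$ trivially since $0 \in \alg{L}$. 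The identical argument applies verbatim to the Lie product, giving both containments. One technical care worth flagging: to make ``$c_n \in \alg{L}_n$ and $c_n \to c$ implies $c \in \alg{L}$'' rigorous, I would invoke the standard fact that if $\alg{L}_n \to \alg{L}$ for the Hausdorff distance on closed subsets of $\sa{\A}$ and $c_n \in \alg{L}_n$ with $c_n \to c$, then $\dist(c, \alg{L}) \leq \|c - c_n\|_\A + \dist(c_n, \alg{L}) \leq \|c-c_n\|_\A + \Haus{\|\cdot\|_\A}(\alg{L}_n, \alg{L}) \to 0$, so $c \in \alg{L}$ by closedness.
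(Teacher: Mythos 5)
Your proof is correct and follows essentially the same route as the paper: approximate $a,a'$ by elements of $\alg{L}_n$, invoke the containment hypothesis in $\alg{L}_n$, and pass to the limit using the continuity of $F(\cdot,\cdot,1,1)$ together with the closedness of $\alg{L}$ and the Hausdorff convergence. The only difference is organizational --- the paper runs an explicit $\mathscr{O}(\varepsilon)$ estimate on $\left\|\Jordan{a}{a'} - F(\|a\|_\A,\|a'\|_\A,1,1)e_n\right\|_\A$ with a witness $e_n$ pushed into $\alg{L}$, whereas you normalize by $\kappa_n = F(\|a_n\|_\A,\|a'_n\|_\A,1,1)$ and take a direct limit of the witnesses, which is a slightly cleaner bookkeeping of the same idea (your separate treatment of $\kappa = 0$ via permissibility is also a valid replacement for the paper's case where $a$ or $a'$ vanishes).
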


\begin{proof}
Let $\Haus{}$ be the Hausdorff distance on closed subsets of $\A$, associated with $\|\cdot\|_\A$.

Let $a,b \in \alg{L}$ and let $\varepsilon > 0$. If either $a$ or $b$ is $0$ then $ab = 0 \in F(\|a\|_\A,\|b\|_\A,1,1)\alg{L}$ trivially, so we henceforth assume that $a\not=0, b\not=0$ and we further impose that $\varepsilon < \frac{1}{2}\min\{\|a\|_\A,\|b\|_\A\}$.

By continuity of $F(\cdot,\cdot,1,1)$, there exists $\delta > 0$ such that for all $x,y \geq 0$ with $|x-\|a\|_\A|\leq\delta$, $|y-\|b\|_\A|\leq\delta$, we have:
\begin{equation*}
|F(\|a\|_\A,\|b\|_\A,1,1) - F(x,y,1,1)| \leq\varepsilon\text{.}
\end{equation*}

Let $\alpha = \min \{\varepsilon,\delta\}$.

There exists $N\in\N$ such that for all $n\geq N$ we have $\Haus{}(\alg{L}_{n},\alg{L})\leq \alpha$. Let $n\geq N$. 

Thus there exists $c_n,d_n \in \alg{L}_{n}$ such that:
\begin{equation*}
\|a-c_n\|_\A \leq\alpha\text{ and }\|b-d_n\|_\A \leq\alpha\text{.}
\end{equation*}

Note that thanks to our choice of $\varepsilon$, we can assert that $c_n, d_n \not= 0$.

By assumption on $\Lip_{n}$, we have:
\begin{equation*}
\Jordan{c_n}{d_n} \in F\left(\|c_n\|_\A,\|d_n\|_\A,1,1\right) \alg{L}_{n}\text{.}
\end{equation*}

To ease notations, let $\lambda_n = F(\|c_n\|_\A,\|d_n\|_\A,1,1)$. Note that since $|\|c_n\|_\A-\|a\|_\A|\leq\delta$ and $|\|d_n\|-\|b\|_\A|\leq\delta$, we have:
\begin{equation*}
|\lambda_n - F(\|a\|_\A,\|b\|_\A,1,1)| \leq\varepsilon\text{.}
\end{equation*}

Let $e_n \in \alg{L}$ such that $\|\Jordan{c_n}{d_n} - \lambda_n e_n\|_\A \leq \lambda_n \alpha$.

On the other hand, we note that since $a,b,c_n,d_n$ are self-adjoint:
\begin{align*}
\|\Jordan{a}{b} - \Jordan{c_n}{d_n}\|_\A &\leq \frac{1}{2}\left( \|ab-c_n d_n\|_\A + \|ba-d_n c_n\|_\A  \right)\\
&= \frac{1}{2}\left(\|ab-c_n d_n\|_\A + \|(ab-c_n d_n)^\ast\|_\A\right) = \|ab-c_n d_n\|_\A\\
&\leq \|a\|_\A\|b-d_n\|_\A + \|d_n\|_\A\|a-c_n\|_\A\text{.}
\end{align*}

Therefore:
\begin{equation*}
\|\Jordan{a}{b}-\Jordan{c_n}{d_n}\|_\A \leq \|a\|_\A\|b-d_n\|_\A + \|d_n\|_\A \|a-c_n\|_\A \leq \|a\|_\A \alpha + (\|b\|_\A +\alpha)\alpha\text{.}
\end{equation*}

We note that $F(x,y,1,1)\geq x+y > 0$ if $x$ or $y$ are strictly positive, so:
\begin{equation*}
\begin{split}
\|e_n\|_\A &\leq \alpha + \lambda_n^{-1} \|\Jordan{c_n}{d_n}\|_\A \\
&\leq \alpha + \frac{\|c_n\|_\A\|d_n\|_\A}{F(\|c_n\|_\A,\|d_n\|_\A,1,1)}  \\
&\leq  \alpha + \frac{\left(\|a\|_\A+\varepsilon\right)\left(\|b\|_\A+\varepsilon\right)}{F(\|a\|_\A-\varepsilon, \|b\|_\A-\varepsilon, 1,1)}\\
&\leq \alpha + \frac{9 \|a\|_\A\|b\|_\A}{4 F\left(\frac{1}{2}\|a\|_\A, \frac{1}{2}\|b\|_\A,1,1\right)} \text{.}
\end{split}
\end{equation*}
Consequently:
\begin{equation*}
\begin{split}
\|\Jordan{a}{b} &- F(\|a\|_\A,\|b\|_\A,1,1)e_n\|_\A \\
&\leq \|\Jordan{a}{b} - \Jordan{c_n}{d_n}\|_\A + \|\Jordan{c_n}{d_n} - \lambda_n e_n\|_\A \\
&\quad + \left\|\left(F(\|a\|_\A,\|b\|_\A,1,1) - \lambda_n  \right)e_n\right\|_\A\\
&\leq \varepsilon(\|a\|_\A+\|b\|_\A+\varepsilon) +\lambda_n \varepsilon \\
&\quad + \left|F(\|a\|_\A,\|b\|_\A,1,1)-\lambda_n \right| \|e_n\|_\A\\
& \leq \varepsilon(\|a\|_\A+\|b\|_\A+\varepsilon) + (F(\|a\|_\A,\|b\|_\A,1,1)+\varepsilon)\varepsilon \\
&\quad + \varepsilon\|e_n\|_\A\\
& \leq \varepsilon(\|a\|_\A+\|b\|_\A+\varepsilon) + (F(\|a\|_\A,\|b\|_\A,1,1)+\varepsilon)\varepsilon \\
&\quad + \varepsilon\left(\varepsilon + \frac{9 \|a\|_\A\|b\|_\A}{4 F\left(\frac{1}{2}\|a\|_\A, \frac{1}{2}\|b\|_\A,1,1\right)} \right) \\
&= \mathscr{O}(\varepsilon)\text{.}
\end{split}
\end{equation*}
Hence, as $\varepsilon \in \left(0,\frac{1}{2}\min\{\|a\|_\A,\|b\|_\A\}\right)$ is arbitrary, by continuity of $F(\cdot,\cdot,1,1)$, and since $\alg{L}$ is closed, we conclude that:
\begin{equation*}
\Jordan{a}{b} \in F(\|a\|_\A,\|b\|_\A,1,1) \alg{L}\text{.}
\end{equation*}

The same proof holds for the Lie product. This proves the key fact of our lemma.

As the Hausdorff limit of a convex balanced set, it is easy to see that $\alg{L}$ is balanced and convex. Indeed, if $a, b\in\alg{L}$, then for all $n\in\N$ there exists $a_n, b_n \in \alg{L}_n$ such that $\|a-a_n\|_\A \leq \Haus{}(\alg{L}_n,\alg{L})+\frac{1}{n+1}$ and $\|b-b_n\|_\A\leq \Haus{}(\alg{L}_n,\alg{L})+\frac{1}{n+1}$.

Now, if $t \in [0,1]$, then $ta_n + (1-t)b_n \in\alg{L}_n$ since $\alg{L}_n$ is convex. Therefore, there exists $c_n \in \alg{L}$ such that $\|c_n - (ta_n+(1-t)b_n)\|_\A\leq \Haus{}(\alg{L}_n,\alg{L}) + \frac{1}{n+1}$. Now:
\begin{multline*}
\|ta+(1-t)b - c_n\|_\A \leq \|t(a-a_n) + (1-t)(b-b_n)\|_\A \\ + \|ta_n + (1-t)b_n - c_n\|_\A \leq 2\Haus{}(\alg{L}_n,\alg{L}) + \frac{2}{n+1} \text{.}
\end{multline*}

Since $\lim_{n\rightarrow\infty} 2\Haus{}(\alg{L},\alg{L}_n)+\frac{2}{n+1} = 0$, we conclude that $(c_n)_{n\in\N}$ converges to $ta+(1-t)b$, and since $\alg{L}$ is closed, we have $ta+(1-t)b \in \alg{L}$. The same reasoning applies to show that $\alg{L}$ is balanced as well.
\end{proof}

Lemmas (\ref{quasi-Leibniz-lemma}) and (\ref{Haus-limit-lemma}) suggest the following definition, which we will use in formulating our main results:
\begin{definition}\label{strongly-permissible-def}
A permissible function $F: [0,\infty)^4\rightarrow[0,\infty)$ is \emph{strongly permissible} when:
\begin{enumerate}
\item $F$ is continuous,
\item for all $(x,y,l_x,l_y)\in [0,\infty)^4$ and for all $\lambda,\mu \geq 0$ we have:
\begin{equation*}
\lambda \mu F(x,y,l_x,l_y) \leq F(\lambda x, \mu y, \lambda l_x, \mu l_y)\text{.}
\end{equation*}
\end{enumerate}
\end{definition}

We now establish a first compactness result, which serves as the basis for our main compactness theorem on {\gQqcms s} in the next section.

\begin{notation}
If $(\A,\Lip)$ is a {\gQqcms}, then the diameter of $\StateSpace(\A)$ for the {\mongekant} $\Kantorovich{\Lip}$ is denoted by $\diam{\A}{\Lip}$. 
\end{notation}

\begin{theorem}\label{f-compact-thm}
Let $F$ be a strongly permissible function and let $\mathcal{QQCMS}_F$ be the class of all {\Qqcms{F}s}. Let $d\in \N\setminus\{0\}$ and $K > 0$. The class:
\begin{equation*}
\mathcal{C}_{F,K,d} = \left\{ (\A,\Lip) \in \mathcal{QQCMS}_{F} : \dim_\C \A \leq d \text{ and }\diam{\A}{\Lip} \leq K \right\}
\end{equation*}
is compact for the dual propinquity $\propinquity{F}$.
\end{theorem}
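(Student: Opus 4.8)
The plan is to exploit that $\propinquity{F}$ is a complete metric on $\mathcal{QQCMS}_F$ --- which holds because a strongly permissible $F$ is in particular continuous, so Theorem (\ref{prop-thm}) applies --- and thereby reduce the assertion to showing that $\mathcal{C}_{F,K,d}$ is both totally bounded and closed for $\propinquity{F}$. The first observation is a finiteness reduction: up to $*$-isomorphism there are only finitely many unital C*-algebras $\A$ with $\dim_\C\A\leq d$ (each is a direct sum $\bigoplus_i M_{n_i}(\C)$ with $\sum_i n_i^2\leq d$), so it suffices to analyze, for each such fixed $\A$, the family of $F$-quasi-Leibniz Lip-norms $\Lip$ with $\diam{\A}{\Lip}\leq K$, and then take a finite union over the finitely many types.

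For a fixed finite-dimensional $\A$, I would fix a state $\mu\in\StateSpace(\A)$ and encode each admissible $\Lip$ by the section $\alg{L}^0=\{a\in\sa{\A}:\mu(a)=0,\ \Lip(a)\leq 1\}$ of its unit ball. The diameter bound gives a uniform norm bound on these sections: for such $a$ and any state $\varphi$ one has $|\varphi(a)|=|\varphi(a)-\mu(a)|\leq\diam{\A}{\Lip}\leq K$, so $\|a\|_\A=\sup_\varphi|\varphi(a)|\leq K$, i.e. $\alg{L}^0\subseteq\{a\in\ker\mu:\|a\|_\A\leq K\}$, a fixed norm-ball of a finite-dimensional space. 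By the Blaschke selection theorem, the closed balanced convex subsets of this norm-ball form a space that is compact, hence totally bounded, for $\Haus{\|\cdot\|_\A}$. The crux is then the estimate that on the fixed $\A$ the propinquity is dominated by the Hausdorff distance between sections: if $\Lip,\Lip'$ are two admissible Lip-norms with sections $\alg{L}^0,{\alg{L}'}^0$, then
\begin{equation*}
\propinquity{F}\bigl((\A,\Lip),(\A,\Lip')\bigr)\leq\Haus{\|\cdot\|_\A}\bigl(\alg{L}^0,{\alg{L}'}^0\bigr)\text{.}
\end{equation*}
Granting this, total boundedness follows by a maximal-separation argument: an infinite $\varepsilon$-separated subset of the type-$\A$ family would, by total boundedness of the Blaschke space, contain two members whose sections are within $\varepsilon$ in $\Haus{\|\cdot\|_\A}$, hence within $\varepsilon$ in $\propinquity{F}$, a contradiction.

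To prove the key estimate, given $\gamma>\Haus{\|\cdot\|_\A}(\alg{L}^0,{\alg{L}'}^0)$ I would build the tunnel $(\A\oplus\A,\Lip_\D,\pi_1,\pi_2)$ with $\pi_1,\pi_2$ the two coordinate surjections and
\begin{equation*}
\Lip_\D(a,b)=\max\left\{\Lip(a),\Lip'(b),\tfrac{1}{\gamma}\|a-b\|_\A\right\}\text{.}
\end{equation*}
That $\Lip_\D$ is $F$-quasi-Leibniz is exactly the computation in the proof of Proposition (\ref{tunnels-exist-prop}), which uses only permissibility of $F$ to absorb the coupling term $\tfrac{1}{\gamma}\|\cdot\|_\A$. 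The quotient conditions making this an $F$-tunnel follow from the Hausdorff bound: given $a$ with $\Lip(a)=s>0$, the element $a-\mu(a)\unit_\A$ lies in $s\alg{L}^0$, so there is $b^0\in{\alg{L}'}^0$ with $\|a-\mu(a)\unit_\A-sb^0\|_\A<s\gamma$, and $b=sb^0+\mu(a)\unit_\A$ then satisfies $\Lip'(b)\leq s$ and $\Lip_\D(a,b)=\Lip(a)$; the case $s=0$ is trivial, and the argument is symmetric for $\pi_2$. Finally, for any state $\theta$ of $\A$ one has $\Kantorovich{\Lip_\D}(\theta\circ\pi_1,\theta\circ\pi_2)\leq\gamma$ since $\Lip_\D(a,b)\leq 1$ forces $\|a-b\|_\A\leq\gamma$; together with convexity of $\Kantorovich{\Lip_\D}$ and of $\pi_1^\ast(\StateSpace(\A))$ this bounds the extent of the tunnel by $\gamma$, and letting $\gamma$ decrease to the Hausdorff distance yields the estimate.

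It remains to establish closedness, and here lies the main obstacle. Since the diameter is continuous for $\propinquity{F}$, the diameter bound passes to limits; the real difficulty is that the dimension bound must be preserved, even though limits can \emph{collapse} to strictly lower-dimensional spaces (e.g. two points merging). I would reduce, by the finiteness pigeonhole, to a $\propinquity{F}$-convergent sequence $(\A,\Lip_n)$ on a fixed $\A$; by the estimate above the sections are $\Haus{\|\cdot\|_\A}$-Cauchy, hence converge to some closed balanced convex $\alg{L}^0$. When $\alg{L}^0$ is absorbing in $\ker\mu$, its Minkowski functional defines a limit Lip-norm on $\A$ that is genuinely in $\mathcal{C}_{F,K,d}$: it is $F$-quasi-Leibniz by Lemma (\ref{Haus-limit-lemma}) followed by Lemma (\ref{quasi-Leibniz-lemma}) --- precisely the point where strong permissibility of $F$ is used --- and it is a Lip-norm by Rieffel's Theorem (\ref{Rieffel-thm}). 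When $\alg{L}^0$ is not absorbing, the limiting seminorm is finite only on the subspace $V=\operatorname{span}(\alg{L}^0)\oplus\R\unit_\A$ of $\sa{\A}$; the decisive claim is that the propinquity limit is then carried by these surviving directions, so that its self-adjoint part has real dimension at most $\dim_\R V\leq\dim_\R\sa{\A}=\dim_\C\A\leq d$. Making this identification rigorous --- that is, proving lower semicontinuity of the dimension under $\propinquity{F}$ for finite-dimensional spaces --- is the step I expect to require the most care, and once it is in hand the limit lies in $\mathcal{C}_{F,K,d}$, completing the proof that the class is closed, hence compact.
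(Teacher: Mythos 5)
Your total-boundedness half is sound and, once unpacked, runs on the same machinery as the paper's proof: a Blaschke-type selection applied to the sections $\{a\in\sa{\A} : \mu(a)=0,\ \Lip(a)\leq 1\}$ of the unit balls, combined with the tunnel on $\A\oplus\A$ whose coupling term is $\frac{1}{\gamma}\|a-b\|_\A$. Your pigeonhole over the finitely many isomorphism classes of C*-algebras of dimension at most $d$ replaces the paper's device of embedding every $\A_n$ into $\alg{M}_d$ and stabilizing the unit projection; it even has the pleasant side effect of making the dimension bound on any candidate limit automatic, since a subalgebra of $\A$ has complex dimension at most $\dim_\C\A\leq d$, whereas the paper's ambient algebra $p\alg{M}_dp$ can have dimension up to $d^2$ and a separate linear-independence argument is required there.

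The gap is exactly where you flag it: the closedness step is not proved. Two concrete problems. First, the assertion that the sections of a $\propinquity{F}$-Cauchy sequence are $\Haus{\|\cdot\|_\A}$-Cauchy does not follow from your estimate, which bounds $\propinquity{F}$ by the Hausdorff distance of sections and not conversely (composing a fixed Lip-norm with automorphisms of $\A$ shows the passage from sections to isometry classes is many-to-one); this is repaired by extracting a Hausdorff-convergent subsequence via Blaschke rather than claiming Cauchyness. Second, and more seriously, the ``decisive claim'' --- that the $\propinquity{F}$-limit is carried by the surviving directions when $\alg{L}^0$ fails to be absorbing --- is the actual content of the theorem at this point, and you neither prove it nor reduce it to anything proved. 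No abstract lower semicontinuity of dimension is needed, and seeking one is the wrong move. The paper instead exhibits the limit concretely as $(\A',\Lip')$ with $\A'=\{b\in\A : \Re (b),\Im (b)\in\operatorname{span}_\R\alg{L}\}$, a unital C*-subalgebra of $\A$ because the quasi-Leibniz property of the limit Minkowski functional (Lemmas (\ref{Haus-limit-lemma}) and (\ref{quasi-Leibniz-lemma})) makes $\dom{\Lip'}$ a Jordan--Lie subalgebra, and then runs your very tunnel with codomain $\A'$ rather than $\A$: the quotient and reach conditions only require that each element of $\alg{L}_n$ be $\varepsilon$-close to an element of $\alg{L}$ and vice versa, which is precisely the Hausdorff convergence of the unit balls and is insensitive to whether $\alg{L}$ spans all of $\sa{\A}$. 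With that tunnel the subsequence converges to $(\A',\Lip')\in\mathcal{C}_{F,K,d}$, and uniqueness of $\propinquity{F}$-limits up to isometric isomorphism finishes closedness. As written, however, your proof stops short of the theorem's main difficulty.
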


\begin{proof}
Let $(\A_n,\Lip_n)_{n\in\N}$ be a sequence in $\mathcal{C}_{F,K,d}$. Let $\alg{M}_{d}$ be the C*-algebra of $d\times d$ matrices.

First, fix $n\in\N$.  We identify the C*-algebra $\A_n$ with a C*-subalgebra of $\alg{M}_d$ as follows. Up to $\ast$-isomorphism, we can write $\A_n = \oplus_{j \in J}\alg{M}_{t(j)}$ where $J = \{1,\ldots, d\}$ and $t(1)\geq t(2) \geq \cdots \geq t(d)$. We note that $t$ may be zero for some $j\in\{1,\ldots,d\}$, and the zero set of $t$ is a tail of $J$. 

Let $j\in \{1,\ldots,d\}$. Let $s(j) = \sum_{k=1}^j t(j)$ and set $s(0) = 0$. We now let $Q_j$ be the projection given as the diagonal matrix whose only nonzero entries are $1$ on the diagonal, from row $s(j-1)+1$ to $s(j)$, i.e. in block form:
\begin{equation*}
Q_j = \begin{pmatrix}
0_{s(j-1)} & & \\ & 1_{s(j)-s(j-1)} & \\ & & 0_{d - s(j)}
\end{pmatrix}\text{.}
\end{equation*}
 Of course the projections $Q_j$ are orthogonal and sum to the identity of $\alg{M}_d$. 

It then follows trivially that $\A_n$ is isomorphic to $\A'_n = \sum_{j\in J} Q_j\alg{M}_d Q_j$. Let $\pi_n : \A_n \rightarrow \A'_n$ be the *-isomorpism thus constructed. Note that $\pi_n$ is \emph{not} a unital map from $\A_n$ into $\alg{M}_d$.

If $\unit_n$ is the unit of $\A_n$ for all $n\in\N$, then $(\pi_n(\unit_n))_{n\in\N}$ is a sequence of diagonal projections, i.e. diagonal $d\times d$-matrices with entries in $\{0,1\}$. Thus, there exists a constant subsequence $(\pi_{g(n)}(\unit_{g(n)}))_{n\in\N}$, with value denoted by $p$, of $(\pi_n(\unit_n))_{n\in\N}$.

Let $\B = p \alg{M}_d p$. Note that for all $n\in\N$, the map $p\pi_{g(n)}p : \A_n \rightarrow \B$ is now a unital *-monomorphism. We shall henceforth omit the notation $\mathrm{ad}_p\pi_{g(n)}$ and simply identify $\A_{g(n)}$ with $p\pi_{g(n)}(\A_{g(n)})p$. We emphasize that with this identification, $\unit_{g(n)} = \unit_\B$.

Let $\alg{R} = \{ b \in \sa{\B} : \|b\|\leq K \}$ be the closed ball of center $0$ and radius $K$ in $\sa{\B}$. Since $\sa{\B}$ is finite dimensional, the set $\alg{R}$ is compact in norm. We shall denote by $\Haus{}$ the Hausdorff distance defined by the norm of $\sa{\B}$ on the compact subsets of $\alg{R}$. Since $\alg{R}$ is compact in norm, $\Haus{}$ induces a compact topology on the set of compact subsets of $\alg{R}$ as well \cite[Theorem 7.3.8]{burago01}.

Let us fix a state $\varphi$ of $\B$, and identify $\varphi$ with its restriction to $\A_{g(n)}$, which is a state of $\A_{g(n)}$, for all $n\in\N$. 

Now, for all $n\in\N$, let:
\begin{equation*}
\alg{L}_n = \left\{ a \in \sa{\A_{g(n)}} : \Lip_{g(n)}(a)\leq 1 \right \}
\end{equation*}
and
\begin{equation*}
\alg{D}_n = \left\{ a \in \alg{L}_n : \varphi(a) = 0 \right\}\text{.}
\end{equation*}
Fix $n\in\N$. By construction, we check that $\alg{L}_n = \alg{D}_n + \R\unit_\B$, since for all $a\in \sa{\A_{g(n)}}$ we check easily that $\Lip_{g(n)}(a \pm \varphi(a)\unit_n) = \Lip_{g(n)}(a)$. On the other other hand, we note that $\alg{D}_n$ is a compact subset of $\alg{R}$ since $\diam{\StateSpace(\A_n)}{\Kantorovich{\Lip_n}}\leq K$. Indeed, if $a\in\alg{D}_n$ then, for all $\psi\in\StateSpace(\A_{g(n)})$, we have:
\begin{equation*}
|\psi(a)| = |\psi(a)-\varphi(a)| \leq \Kantorovich{\Lip}(\varphi,\psi) \leq K\text{.}
\end{equation*}
Moreover, compactness of $\alg{D}_n$ follows from Theorem (\ref{Rieffel-thm}) since $(\A_{g(n)},\Lip_{g(n)})$ is a {\gQqcms} for all $n\in\N$.

 Thus, there exists a convergent subsequence $(\alg{D}_{f(n)})_{n\in\N}$ of $(\alg{D}_n)_{n\in\N}$ for $\Haus{}$, whose limit we denote by $\alg{D}$.

We now define $\alg{L} = \alg{D} + \R\unit_\B$. Let us first check that $(\alg{L}_{f(n)})_{n\in\N}$ converges to $\alg{L}$. Let $\varepsilon > 0$. There exists $N\in\N$ such that for all $n\geq N$, we have:
\begin{equation*}
\Haus{\|\cdot\|_\B}(\alg{D}_{f(n)},\alg{D})\leq\varepsilon\text{.}
\end{equation*}

Let $n\geq N$. We observe that, for any $a\in\alg{L}_{f(n)}$, there exists $a' \in \alg{D}_{f(n)}$ and $t \in \R$ such that $a=a'+t\unit_\B$. Now, there exists $b'\in\alg{D}$ such that $\|a'-b'\|_\B\leq\varepsilon$. Let $b = b'+t\unit_\B \in \alg{L}$. Then $\|a-b\|_\B = \|a'-b'\|_\B \leq\varepsilon$, so $\alg{L}_{f(n)}$ is included in an $\varepsilon$-neighborhood of $\alg{L}$. Using a symmetric argument, we conclude:
\begin{equation*}
\Haus{}\left(\alg{L}_{f(n)},\alg{L}\right) \leq\varepsilon
\end{equation*}
and thus $\left(\alg{L}_{f(n)}\right)_{n\in\N}$ converges to $\alg{L}$ for the Hausdorff distance $\Haus{}$. 

Moreover, $\alg{D} = \{a\in\alg{L} : \varphi(a) = 0\}$ by construction and continuity of $\varphi$. Last, as $\alg{D}$ is compact, hence closed, the set $\alg{L} = \alg{D} + \R\unit_\B$ is closed as well: if $(l_n)_{n\in\N} \in \alg{L}$ converges to some $l$ in $\B$ then $(l_n-\varphi(l_n)\unit_\B)_{n\in\N}$ is a sequence in $\alg{D}$, and thus by continuity of $\varphi$ and since $\alg{D}$ is closed, $l-\varphi(l)\unit_\B \in \alg{D}$. Thus $l\in\alg{L}$.
 
For all $b\in\sa{\B}$ we define:
\begin{equation*}
\Lip(b) = \inf \{ \lambda > 0 : b \in \lambda \alg{L} \}\text{.}
\end{equation*}

Using our assumptions on $F$, by Lemma (\ref{Haus-limit-lemma}), the set $\alg{L}$ satisfies Lemma (\ref{quasi-Leibniz-lemma}), and thus $\Lip$ is a $F$-quasi-Leibniz seminorm.

Certainly $\Lip$ may assume the value $\infty$. Let $\alg{J} = \dom{\Lip}$ be the set of self-adjoint elements in $\B$ for which $\Lip$ is finite.

If $a,b \in \alg{J}$ then:
\begin{equation*}
\Lip(\Jordan{a}{b}), \Lip(\Lie{a}{b}) \leq F(\|a\|_\A,\|b\|_\A,\Lip(a),\Lip(b)) < \infty
\end{equation*} 
so $\alg{J}$ is a Jordan-Lie subalgebra of $\sa{\B}$. We define:
\begin{equation*}
\A = \{ b \in \B : \Re(b),\Im(b) \in \alg{J} \}
\end{equation*}
and we check that $\A$ is a C*-subalgebra of $\B$ with the same unit as $\B$ and such that $\sa{\A} = \alg{J}$.

If $\Lip(a) = 0$ for some $a\in\alg{J}$, then we have $\Lip(a - \varphi(a)\unit_\A) = 0$ as well since $\Lip(\unit_\A) = 0$ and $\Lip$ is a seminorm by construction. Thus $a-\varphi(a)\unit_\A \in \alg{D}$. Now, for any $t\in\R$, we have $\varphi(t(a-\varphi(a)\unit_\A)) = 0$ and $\Lip(t(a-\varphi(a)\unit_\A)) = 0$, so $t(a-\varphi(a)\unit_\A)\in\alg{D}$ for all $t \in \R$. Since $\alg{D}$ is norm bounded, we conclude that $a=\varphi(a)\unit_\A$ as desired.

Since $\alg{D}\subseteq \alg{R}$, for any two states $\varphi,\psi\in\StateSpace(\A)$ and for all $a\in\alg{L}$ we have $|\varphi(a)-\psi(a)|\leq K$ and thus $\diam{\StateSpace(\A)}{\Kantorovich{\Lip}} \leq K$.

Moreover, since $\alg{D}$ is compact, we conclude that $\Lip$ is an $F$-quasi-Leibniz Lip-norm and $(\A,\Lip)$ is an {\Qqcms{F}} by Theorem (\ref{Rieffel-thm}).

We now prove that $\dim\A\leq d$. First, for all $n\in\N$, let $d_n = \dim_\R \sa{\A_{g(f(n))}}$. By assumption, since $\dim_\R\sa{\A_{g(f(n))}} = \dim_\C\A_{g(f(n))}$, we conclude that $d_n \in \{1,\ldots,d\}$. Thus, there exists a constant subsequence $(d_{f_1(n)})_{n\in\N}$ of $(d_n)_{n\in\N}$. Set $g_1 = g \circ f \circ f_1$ and $\delta = d_{f_1(0)} = d_{f_1(1)} = \ldots$. 

Now, we note that by construction, $\alg{J}$ is the linear span (over $\R$) of $\alg{L}$. Assume that we have a linearly independent family $(a_j)_{j\in\{1,\ldots,\delta+1\}}$ in $\alg{L}$. Since $\alg{L}$ is the limit, for the Hausdorff distance, of $(\alg{L}_{g(n)})_{n\in\N}$, we conclude that for all $j\in\{1,\ldots,\delta\}$, there exists a sequence $(a_j^n)_{n\in\N}$ with $a_j^n\in\alg{L}_{g(n)}$ for all $n\in\N$ and $\lim_{n\rightarrow\infty} a_j^n = a_j$. 

Fix $n\in\N$. Then, as $\dim{\A_{g(n)}} = \delta$, the family $(a_j^n)_{j\in\{1,\ldots,\delta+1\}}$ is linearly dependent. Thus there exists $\lambda_1^n,\ldots,\lambda_{\delta+1}^n \in \R$ with:
\begin{equation*}
\sum_{j=1}^{\delta+1} \lambda_j^n a_j^n = 0\text{ and }\max\{|\lambda_j^n|:j=1,\ldots,\delta+1\} = 1\text{.}
\end{equation*}
Now, in particular, the sequences $(\lambda_j^n)_{n\in\N}$ are bounded, for all $j$ in the finite set $\{1,\ldots,\delta+1\}$. Thus, there exists a strictly increasing function $g_2: \N\rightarrow\N$ such that for all $j\in\{1,\ldots,\delta+1\}$, the sequence $(\lambda_j^{g_2(n)})_{n\in\N}$ converges to some $\lambda_j$. Note that by continuity:
\begin{equation*}
\max\{|\lambda_j|:j=1,\ldots,\delta+1\} = 1\text{.}
\end{equation*}

On the other hand, again by continuity:
\begin{equation*}
\sum_{j=1}^{\delta+1} \lambda_j a_j = 0\text{.}
\end{equation*}
This last two equations contradicts that $(a_j)_{j\in\{1,\ldots,\delta+1}$ is linearly independent. Thus, the span of $\alg{L}$ is of dimension (over $\R$) at most $\delta$, thus at most $d$. Consequently, $\dim_\C\A = \dim_\R\mathrm{span}\alg{L} \leq d$ as desired.

Now, we wish to conclude by showing that $(\A_{g(f(n))},\Lip_{g(f(n))})_{n\in\N}$ converges to $(\A,\Lip)$ for the quantum propinquity. Let $\varepsilon > 0$. There exists $N\in\N$ such that for all $n\geq N$, we have $\Haus{}(\alg{L}_{g\circ f(n)},\alg{L})\leq \varepsilon$. Let now $n\geq N$.

For all $a\in\A_{g\circ f(n)}$ and $b\in \A$, we set:
\begin{equation*}
N_n(a,b) = \frac{1}{\varepsilon}\|a-b\|_\B\text{,}
\end{equation*}
and
\begin{equation*}
\Lip^n(a,b) = \max\left\{\Lip_{g\circ f(n)}(a), \Lip(b), N_n(a,b) \right\}\text{.}
\end{equation*}
It is easily checked that $N_n$ is a bridge in the sense of \cite[Definition 5.1]{Rieffel00}: in particular, if $a\in\A_{g\circ f(n)}$ with $\Lip_{g\circ f(n)}(a)\leq 1$ then there exists $b \in \alg{L}$ with $\|a-b\|_\B\leq\varepsilon$, which implies $\Lip^n(a,b)\leq 1$; similarly if $b\in \A$ with $\Lip(b)\leq 1$, i.e. $b\in\alg{L}$, then there exists $a\in\alg{L}_{f(n)}$ with $\|a-b\|_\B\leq\varepsilon$ and thus $\Lip^n(a,b) \leq 1$.

Hence by \cite[Theorem 5.2]{Rieffel00}, the seminorm $\Lip^n$ is a Lip-norm. It is lower semi-continuous by construction, and it is easily checked that $\Lip^n$ is $F$-quasi-Leibniz, as in our proof of Theorem (\ref{tunnel-composition-thm}). 

Let $\tau_n = (\A_{g\circ f(n)} \oplus \A, \Lip, \rho_n, \rho)$ with $\rho_n : \A_{g\circ f(n)} \oplus \A \twoheadrightarrow \A_{g\circ f(n)}$ and $\rho : \A_{g\circ f(n)}\oplus\A\twoheadrightarrow \A$ the two canonical surjections. By construction, $\tau_n$ is an $F$-tunnel.

The depth of $\tau_n$ (Definition (\ref{length-def})) is null, and thus the length of $\tau_n$ is its spread. Now, let $\mu\in\StateSpace(\A_{g\circ f(n)})$. Let $\mu'$ be a state of $\B$ given by applying the Hahn-Banach Theorem for positive linear maps to $\mu$. Let $\nu$ be the restriction of $\mu$ to $\A$ and note that $\nu \in\StateSpace(\A)$ as $\A$ and $\B$ share the same unit. If $(a,b)\in \A_{g\circ f(n)}\oplus\A$ with $\Lip^n(a,b)\leq 1$, then:
\begin{equation*}
|\mu(a)-\nu(b)| \leq \|a-b\|_\B \leq \varepsilon\text{.}
\end{equation*}
We may perform the same computation with $\A_{g\circ f(n)}$ and $\A$ switched. Thus $\tunnellength{\tau_3} \leq \varepsilon$. Thus $\tunnelextent{\tau_n}\leq 2\varepsilon$.

Consequently: for all $\varepsilon > 0$, there exists $N\in\N$ such that for all $n\geq N$, we have:
\begin{equation*}
\propinquity{F}((\A_{g\circ f(n)},\Lip_{g\circ f(n)}),(\A,\Lip)) \leq 2\varepsilon\text{.}
\end{equation*}

Moreover, $(\A,\Lip)$ is a {\Qqcms{F}} of diameter at most $K$ and dimension at most $d$. This completes our proof.
\end{proof}

We conclude by observing that, using the notations of the proof of Theorem (\ref{f-compact-thm}), the quadruple $(\B, \unit_\B, \iota_n,\iota)$ where $\iota_n : \A_{g\circ f(n)}\hookrightarrow\B$ and $\iota:\A\hookrightarrow\B$ are the inclusion maps, is a bridge for the extension of the quantum propinquity to $F$-{\gQqcms s}. Thus, Theorem (\ref{f-compact-thm}) is also valid for the quantum propinquity.

\section{A compactness Theorem}

This section establishes the core result of our paper, which characterizes compact classes of {\gQqcms s} for the dual propinquity among all subclasses of the closure of finite dimensional {\gQqcms s}. Our reason for working within this closure is that our main theorem employs the following key notion, motivated by Gromov's Theorem (\ref{Gromov-Compactness-thm}):

\begin{definition}
Let $F$ be a permissible function, and let $\mathcal{QQCMS}_{F}$ be the class of all {\Qqcms{F}s}. Let $(\A,\Lip)$ be a {\Qqcms{F}} and let $\varepsilon > 0$. 

The \emph{covering number} $\covn{F}{\A,\Lip}{\varepsilon}$ is:
\begin{equation*}
\covn{F}{\A,\Lip}{\varepsilon} = \min\left\{ n \in \N : \begin{array}{l} \exists (\B,\Lip_\B) \in \mathcal{QQMS}_{F} \text{ such that}\\
\dim_\C \B \leq n \text{ and}\\
\propinquity{F}((\A,\Lip_\A),(\B,\Lip_\B))\leq\varepsilon 
\end{array}
\right\}\text{,}
\end{equation*}
where by convention, $\min\emptyset = \infty$.
\end{definition}

Our generalization of Gromov's Compactness Theorem (\ref{Gromov-Compactness-thm}) is given by:

\begin{theorem}\label{compactness-thm}
Let $F$ be a strongly permissible function. and let $\mathcal{A}$ be a class of {\Qqcms{F}s} in the closure of the finite dimensional {\Qqcms{F}s} for the dual propinquity $\propinquity{F}$. The following assertions are equivalent:
\begin{enumerate}
\item $\mathcal{A}$ is totally bounded for $\propinquity{F}$,
\item there exists a function $f : (0,\infty)\rightarrow \N$ and $K > 0$ such that for all $(\A,\Lip_\A) \in \mathcal{A}$, we have:
\begin{itemize}
\item $\diam{\A}{\Lip_\A}\leq K$,
\item for all $\varepsilon > 0$ we have $\covn{F}{\A,\Lip_\A}{\varepsilon}\leq f(\varepsilon)$.
\end{itemize}
\end{enumerate}
\end{theorem}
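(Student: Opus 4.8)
The plan is to prove the two implications separately, relying on three facts established earlier: the triangle inequality for $\propinquity{F}$ (Theorem \ref{prop-thm}), the compactness of the finite-dimensional classes $\mathcal{C}_{F,K,d}$ (Theorem \ref{f-compact-thm}), and two elementary monotonicity properties that I would isolate first. For the diameter, I would observe that if $\tau = (\D,\Lip_\D,\pi_1,\pi_2)$ is an $F$-tunnel from $(\A_1,\Lip_1)$ to $(\A_2,\Lip_2)$, then each $\pi_j^\ast$ isometrically embeds $(\StateSpace(\A_j),\Kantorovich{\Lip_j})$ into $(\StateSpace(\D),\Kantorovich{\Lip_\D})$ --- this is exactly the quotient condition defining a tunnel --- so that $\diam{\A_j}{\Lip_j} = \diam{\pi_j^\ast(\StateSpace(\A_j))}{\Kantorovich{\Lip_\D}}$. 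Since both images lie within Hausdorff distance $\tunnelextent{\tau}$ of $\StateSpace(\D)$, they lie within Hausdorff distance $2\tunnelextent{\tau}$ of each other, and hence their diameters differ by at most $4\tunnelextent{\tau}$. Taking the infimum over tunnels yields
\[
\left|\diam{\A_1}{\Lip_1}-\diam{\A_2}{\Lip_2}\right| \leq 4\,\propinquity{F}\left((\A_1,\Lip_1),(\A_2,\Lip_2)\right)\text{,}
\]
so the diameter is Lipschitz in $\propinquity{F}$. For the covering number, I would note that whenever $\propinquity{F}((\A,\Lip_\A),(\B,\Lip_\B))\leq\delta$, the triangle inequality gives $\covn{F}{\A,\Lip_\A}{\varepsilon} \leq \covn{F}{\B,\Lip_\B}{\varepsilon-\delta}$ for every $\varepsilon>\delta$, since any finite-dimensional space realizing the right-hand side is then within $\varepsilon$ of $(\A,\Lip_\A)$.

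For (1) $\Rightarrow$ (2), suppose $\mathcal{A}$ is totally bounded. The Lipschitz bound on the diameter immediately forces a uniform diameter bound: fixing a finite $1$-net of $\mathcal{A}$ with centers in $\mathcal{A}$ (possible for a totally bounded subset of a metric space), the diameters of the net points are finite, and every element of $\mathcal{A}$ has diameter within $4$ of one of them, giving $K$. For the covering bound, fix $\varepsilon>0$ and choose a finite $\frac{\varepsilon}{2}$-net $(\B_1,\Lip_1),\dots,(\B_p,\Lip_p)$ of $\mathcal{A}$ with centers in $\mathcal{A}$. This is where the hypothesis that $\mathcal{A}$ lies in the closure of the finite-dimensional {\Qqcms{F}s} is crucial: each $(\B_i,\Lip_i)$ is a limit of finite-dimensional spaces, so $\covn{F}{\B_i,\Lip_i}{\frac{\varepsilon}{2}}<\infty$. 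Setting $f(\varepsilon)=\max_{i}\covn{F}{\B_i,\Lip_i}{\frac{\varepsilon}{2}}$, the covering monotonicity gives, for any $(\A,\Lip_\A)\in\mathcal{A}$ within $\frac{\varepsilon}{2}$ of some $(\B_i,\Lip_i)$, the bound $\covn{F}{\A,\Lip_\A}{\varepsilon}\leq\covn{F}{\B_i,\Lip_i}{\frac{\varepsilon}{2}}\leq f(\varepsilon)$.

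For (2) $\Rightarrow$ (1), assume the diameter bound $K$ and the covering function $f$, fix $\varepsilon>0$, and set $\delta=\frac{\varepsilon}{2}$. By definition of the covering number, each $(\A,\Lip_\A)\in\mathcal{A}$ lies within $\propinquity{F}$-distance $\delta$ of some finite-dimensional {\Qqcms{F}} $(\B,\Lip_\B)$ with $\dim_\C\B\leq f(\delta)$; by the Lipschitz diameter bound, $\diam{\B}{\Lip_\B}\leq K+4\delta$. Thus every element of $\mathcal{A}$ is within $\delta$ of the class $\mathcal{C}_{F,K+4\delta,f(\delta)}$ of Theorem \ref{f-compact-thm}. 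That class is compact, hence totally bounded, so it admits a finite $\delta$-net; chaining the two $\delta$-approximations through the triangle inequality produces a finite $2\delta=\varepsilon$-net for $\mathcal{A}$. As $\varepsilon$ was arbitrary, $\mathcal{A}$ is totally bounded.

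The conceptual heart of the argument, and the point requiring the most care, is the correct exploitation of the closure hypothesis: it is exactly what guarantees the finiteness of $\covn{F}{\B_i,\Lip_i}{\cdot}$ at the net points in the first implication, and thereby makes a genuine analogue of Gromov's covering condition (Theorem \ref{Gromov-Compactness-thm}) meaningful at all. The two monotonicity facts isolated above are the technical engine, while the reduction of total boundedness of $\mathcal{A}$ to that of the finite-dimensional classes $\mathcal{C}_{F,K',d}$ --- already proved compact in Theorem \ref{f-compact-thm} --- is precisely what allows the classical Gromov scheme to transfer to the noncommutative setting.
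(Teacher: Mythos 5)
Your proof is correct and follows essentially the same route as the paper: both directions rest on the Lipschitz dependence of the diameter on $\propinquity{F}$, the compactness of the classes $\mathcal{C}_{F,K,d}$ from Theorem (\ref{f-compact-thm}), and the closure hypothesis to guarantee finiteness of the covering numbers at the net points. The only differences are cosmetic --- you derive the diameter's Lipschitz constant directly from the tunnel structure (obtaining $4$ where the paper cites \cite[Lemma 13.6]{Rieffel00} for $2$) and use two $\frac{\varepsilon}{2}$-steps where the paper uses three $\frac{\varepsilon}{3}$-steps to place the final net inside $\mathcal{A}$ --- neither of which affects the argument.
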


\begin{proof}
Assume (2), i.e. assume that there exists $f : (0,\infty)\rightarrow\N$ and $K>0$ such that for all $(\A,\Lip_\A)\in\mathcal{A}$ and $\varepsilon > 0$, we have $\diam{\StateSpace(\A)}{\Kantorovich{\Lip_\A}} \leq K$ and:
\begin{equation*}
\covn{F}{\A,\Lip_\A}{\varepsilon} \leq f(\varepsilon)\text{.}
\end{equation*}

Let $\varepsilon > 0$. 

First, we note that if $(\A,\Lip_\A) \in \mathcal{A}$, then there exists a {\Qqcms{F}} $(\alg{a},\Lip_{\alg{a}})$ such that:
\begin{itemize}
\item $\dim_\C \alg{a} \leq f\left(\frac{\varepsilon}{3}\right)$,
\item $\propinquity{F}((\alg{a},\Lip_{\alg{a}}),(\A,\Lip_\A)) \leq \frac{\varepsilon}{3}$.
\end{itemize}
Consequently, we note that $\diam{\alg{a}}{\Lip_{\alg{a}}} \leq K + \frac{2\varepsilon}{3}$, since the function $(\B,\Lip_\B)\in\mathcal{A}\mapsto \diam{\B}{\Lip_\B}$ is $2$-Lipschitz for the quantum Gromov-Hausdorff distance, and thus for the dual propinquity, by \cite[Lemma 13.6]{Rieffel00}.

Now, by Theorem (\ref{f-compact-thm}), the class:
\begin{equation*}
\mathcal{F}_\varepsilon = \left\{ (\B,\Lip) \in \mathcal{QQCMS}_{F} : \dim_\C\B \leq f\left(\frac{\varepsilon}{3}\right) \text{ and } \diam{\B}{\Lip}\leq K + \frac{2\varepsilon}{3} \right\}
\end{equation*}
is compact for $\propinquity{F}$. 

Let:
\begin{equation*}
\mathcal{G}_\varepsilon = \left\{ (\B,\Lip_\B)\in \mathcal{F}_\varepsilon : \exists (\A,\Lip_\A) \in \mathcal{A} \quad \propinquity{F}((\A,\Lip_\A), (\B,\Lip_\B)) \leq \frac{\varepsilon}{3} \right\}\text{.}
\end{equation*}

Since $\mathcal{G}_\varepsilon\subseteq \mathcal{F}_\varepsilon$, we conclude that $\mathcal{G}_\varepsilon$ is totally bounded for the dual propinquity. Thus, there exists a finite subset $\mathcal{J}_\varepsilon$ of $\mathcal{G}_\varepsilon$ which is $\frac{\varepsilon}{3}$ dense in $\mathcal{G}_\varepsilon$ for $\propinquity{F}$.

Therefore, up to invoking choice, there exists a finite subset $\mathcal{A}_\varepsilon$ of $\mathcal{A}$ such that for all $(\B,\Lip_\B)\in\mathcal{J}_\varepsilon$ there exists $(\A,\Lip_\A)\in\mathcal{A}_\varepsilon$ such that $\propinquity{F}((\A,\Lip_\A),(\B,\Lip_\B))\leq\frac{\varepsilon}{3}$. 

Now, let $(\A,\Lip_\A) \in \mathcal{A}$. There exists $(\alg{a},\Lip_{\alg{a}}) \in \mathcal{G}_\varepsilon$ such that:
\begin{equation*}
\propinquity{F}((\alg{a},\Lip_{\alg{a}}),(\A,\Lip_\A))\leq\frac{\varepsilon}{3}\text{.}
\end{equation*}
Now, there exists $(\alg{b},\Lip_{\alg{b}})\in\mathcal{J}_\varepsilon$ such that:
\begin{equation*}
\propinquity{F}((\alg{a},\Lip_{\alg{a}}),(\alg{b},\Lip_{\alg{b}}))\leq\frac{\varepsilon}{3}\text{.}
\end{equation*}
Last, by our choice, there exists $(\B,\Lip_{\B})\in\mathcal{A}_\varepsilon$ with:
\begin{equation*}
\propinquity{F}((\B,\Lip_{\B}),(\alg{b},\Lip_{\alg{b}}))\leq\frac{\varepsilon}{3}\text{.}
\end{equation*}

Consequently:
\begin{equation*}
\begin{split}
\propinquity{F}&((\A,\Lip_\A),(\B,\Lip_\B)) \\
&\leq \propinquity{F}((\A,\Lip_\A),(\alg{a},\Lip_{\alg{a}})) + \propinquity{F}((\alg{a},\Lip_{\alg{a}}),(\alg{b},\Lip_{\alg{b}})) + \propinquity{F}((\alg{b},\Lip_{\alg{b}}), (\B,\Lip_{\B}))\\
&\leq \varepsilon\text{.}
\end{split}
\end{equation*}

Thus $\mathcal{A}_\varepsilon$ is $\varepsilon$-dense in $\mathcal{A}$ for the dual propinquity, and is a finite set. Thus, $\mathcal{A}$ is totally bounded for $\propinquity{F}$.

Assume (1) now, i.e. assume that $\mathcal{A}$ is totally bounded. Since the function:
\begin{equation*}
(\A,\Lip)\in\mathcal{A} \mapsto \diam{\A}{\Lip}
\end{equation*}
is $2$-Lipschitz for the quantum Gromov-Hausdorff distance by \cite[Lemma 13.6]{Rieffel00}, it is continuous, and thus it is bounded above since $\mathcal{A}$ is totally bounded.

Now, let $\varepsilon > 0$. Since $\mathcal{A}$ is totally bounded, there exists a finite subset $\mathfrak{A}_\varepsilon$ of $\mathcal{A}$ which is $\frac{\varepsilon}{2}$-dense in $\mathcal{A}$ for $\propinquity{F}$. For each $(\A,\Lip)\in\mathcal{A}_\varepsilon$, there exists a finite dimensional {\Qqcms{F}} $\alg{f}(\A,\Lip)$ such that:
\begin{equation*}
\propinquity{F}((\A,\Lip),\alg{f}(\A,\Lip))\leq\frac{\varepsilon}{2}
\end{equation*}
by assumption on $\mathcal{A}$.

Let:
\begin{equation*}
f(\varepsilon) = \max \left\{ \dim_\C \alg{f}(\A,\Lip) : (\A,\Lip) \in \mathcal{A}_\varepsilon \right\}\text{.}
\end{equation*}
If $(\A,\Lip)\in\mathcal{A}$, then there exists $(\B,\Lip_\B) \in \mathcal{A}_\varepsilon$ such that $\propinquity{F}((\A,\Lip),(\B,\Lip_\B))\leq\frac{\varepsilon}{2}$. Thus:
\begin{equation*}
\propinquity{F}((\A,\Lip),\alg{f}(\B,\Lip_\B)) \leq \varepsilon\text{.}
\end{equation*}
Thus $\covn{F}{\A,\Lip}{\varepsilon} \leq f(\varepsilon)$ by definition.

This completes our proof.
\end{proof}

We thus conclude:

\begin{corollary}
Let $F$ be a strongly permissible function, and let $\mathcal{A}$ be a class of {\Qqcms{F}s} which lies within the closure of the finite dimensional {\Qqcms{F}s} for the dual propinquity $\propinquity{F}$.

The class $\mathcal{A}$ is compact for the dual propinquity $\propinquity{F}$ if and only if there exists $f : (0,\infty)\rightarrow\N$ and $K > 0$ such that:
\begin{enumerate}
\item for all $(\A,\Lip)\in\mathcal{A}$ we have $\diam{\A}{\Lip}\leq K$,
\item for all $(\A,\Lip)\in\mathcal{A}$ and for all $\varepsilon > 0$, we have:
\begin{equation*}
\covn{F}{\A,\Lip}{\varepsilon} \leq f(\varepsilon)
\end{equation*}
\item $\mathcal{A}$ is closed for $\propinquity{F}$.
\end{enumerate}
\end{corollary}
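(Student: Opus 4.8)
The plan is to obtain this corollary as a formal consequence of Theorem (\ref{compactness-thm}) together with the completeness of the dual propinquity, by invoking the standard fact that, in a complete metric space, a subset is compact precisely when it is closed and totally bounded. The substantive mathematics has already been carried out in the two theorems just cited, so what remains is essentially a matter of assembling them correctly.

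First I would record that since $F$ is strongly permissible, it is in particular continuous by Definition (\ref{strongly-permissible-def}); hence the last clause of Theorem (\ref{prop-thm}) applies and $\propinquity{F}$ is a \emph{complete} metric on the class of all {\Qqcms{F}s}. I treat this class as a genuine metric space after passing to isometric-isomorphism classes, which is legitimate because Assertion (4) of Theorem (\ref{prop-thm}) identifies the null set of $\propinquity{F}$ with isometric isomorphism. No set-theoretic difficulty is introduced by working with a class rather than a set, since both total boundedness and Cauchy convergence refer only to finite $\varepsilon$-nets and to ordinary sequences.

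Next I would apply two elementary facts of metric topology to the ambient complete space of all {\Qqcms{F}s} and its subclass $\mathcal{A}$: a subset of a metric space is compact if and only if it is complete and totally bounded, and a subset of a \emph{complete} metric space is complete (for the induced metric) if and only if it is closed. Combining these gives that $\mathcal{A}$ is compact for $\propinquity{F}$ if and only if $\mathcal{A}$ is closed (Assertion (3)) and totally bounded for $\propinquity{F}$. The final step is then a pure translation: because $\mathcal{A}$ lies within the closure of the finite dimensional {\Qqcms{F}s}, the hypothesis of Theorem (\ref{compactness-thm}) is met, and that theorem equates total boundedness of $\mathcal{A}$ with the existence of a uniform diameter bound $K$ (Assertion (1)) together with a uniform covering-number control $f$ (Assertion (2)). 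Substituting this equivalence yields exactly the asserted characterization, namely that compactness is the conjunction of Assertions (1), (2), and (3).

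I do not expect a genuine obstacle, since both hard inputs are already established; the only points needing care are bookkeeping ones. One is to confirm that replacing ``completeness of $\mathcal{A}$'' by ``$\mathcal{A}$ is closed'' is valid, which relies precisely on the completeness of the ambient space furnished by Theorem (\ref{prop-thm}), and hence ultimately on the continuity built into the strongly permissible $F$. The other is to check that the standing closure hypothesis of the corollary is exactly what Theorem (\ref{compactness-thm}) requires, so that its equivalence may be used to convert total boundedness into Assertions (1) and (2) in both directions of the argument.
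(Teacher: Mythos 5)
Your proposal is correct and follows essentially the same route as the paper: invoke the completeness of $\propinquity{F}$ from Theorem (\ref{prop-thm}) (using that a strongly permissible $F$ is continuous), reduce compactness to closedness plus total boundedness, and convert total boundedness via Theorem (\ref{compactness-thm}). The extra bookkeeping you include about passing to isometric-isomorphism classes is a reasonable elaboration but does not change the argument.
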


\begin{proof}
By Theorem (\ref{prop-thm}), the dual propinquity $\propinquity{F}$ is complete (as $F$ is continuous). The result then follows from Theorem (\ref{compactness-thm}).
\end{proof}

Thus, we are led to study the closure of finite dimensional {\gQqcms s} for the dual propinquity. This question proves tricky. However, our use of quasi-Leibniz Lip-norms, instead of Leibniz Lip-norms, allows us to establish that a large class of {\Qqcms{(C,D)}s} lie within the closure of the class of finite dimensional {\Qqcms{(C',D')}s} for arbitrary $C'>C\geq 1$ and $D'>D\geq 0$. This will be the subject of the next section of this paper.

\section{Finite Dimensional Approximations for pseudo-diagonal {\gQqcms s}}

For an arbitrary {\Lqcms} $(\A,\Lip)$ and $\varepsilon > 0$, it is not immediately clear how to find a finite dimensional C*-algebra $\B$ and a Leibniz Lip-norm $\Lip_\B$ on $\B$ such that $\propinquity{}((\A,\Lip),(\B,\Lip_\B)) \leq\varepsilon$. In this section, we propose to work with {\Lqcms s} whose underlying C*-algebra provides natural topological, finite dimensional approximations, and then attempt to construct Lip-norms on these approximations. As we shall see, however, these Lip-norms will be quasi-Leibniz. Yet, for any $C>1$ and $D>0$,  and for any $\varepsilon$, we will show that $(\A,\Lip)$ is within $\varepsilon$-distance of some {\Qqcms{(C,D)}} of finite dimension, for the dual propinquity, as long as $\A$ satisfies a weakened form of nuclear quasi-diagonality, introduced below as pseudo-diagonality. Our result requires us to work within the framework of {\gQqcms s} and thus, we take this opportunity to establish the result of this section in greater generality: any pseudo-diagonal {\Qqcms{(C,D)}} is the limit of finite dimensional {\Qqcms{(C',D')}s} for $C'> C$ and $D'>D$.

We begin with our choice of a class of C*-algebras with a natural notion of topological finite dimensional approximation, relevant for our purpose:

\begin{definition}\label{pseudo-diagonal-def}
A unital C*-algebra $\A$ is \emph{pseudo-diagonal}, when for all $\varepsilon > 0$ and for all finite subset $\alg{F}$ of $\A$, there exist a finite dimensional C*-algebra $\B$ and two positive unital linear maps $\varphi : \B\rightarrow\A$ and $\psi:\A\rightarrow\B$ such that:
\begin{enumerate}
\item for all $a\in\alg{F}$, we have $\|a-\varphi\circ\psi(a)\|_\A\leq\varepsilon$,
\item for all $a,b \in \alg{F}$, we have $\|\psi\left(\Jordan{a}{b}\right)-\Jordan{\psi(a)}{\psi(b)}\|_\B\leq\varepsilon$,
\item for all $a,b \in \alg{F}$, we have $\|\psi\left(\Lie{a}{b}\right)-\Lie{\psi(a)}{\psi(b)}\|_\B\leq\varepsilon$.
\end{enumerate}
\end{definition}

Note that unital positive linear maps have norm $1$, and thus are contractions.

Definition (\ref{pseudo-diagonal-def}) is motivated by a characterization of unital nuclear, quasi-diagonal C*-algebras, due to Blackadar and Kirchberg \cite[Theorem 5.2.2]{Blackadar97}:

\begin{theorem}[Theorem 5.2.2, \cite{Blackadar97}]\label{BK-thm}
A unital C*-algebra $\A$ is \emph{nuclear Quasi-Diagonal} if and only if, for all $\varepsilon > 0$ and all finite subsets $\alg{F}\subseteq \alg{A}$, there exist a finite dimensional C*-algebra $\B$ and two contractive completely positive maps $\psi: \A \rightarrow \B$ and $\varphi : \B\rightarrow\A$ such that:
\begin{enumerate}
\item for all $a,b\in\alg{F}$ we have $\|\psi(ab)-\psi(a)\psi(b)\|_{\B} \leq \varepsilon$,
\item for all $a\in\alg{F}$ we have $\|a-\varphi\circ\psi(a)\|_\A\leq\varepsilon$,
\end{enumerate}
\end{theorem}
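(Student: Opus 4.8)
The statement is the Blackadar--Kirchberg characterization of nuclear quasi-diagonal C*-algebras, and the plan is to prove it by reconciling the two defining approximation properties into a single pair $(\psi,\varphi)$: nuclearity in the guise of the completely positive approximation property (every c.c.p. factorization of the identity through finite-dimensional algebras), and quasi-diagonality in the guise of Voiculescu's characterization (the existence of asymptotically multiplicative, asymptotically isometric c.c.p. maps into matrix algebras). Both implications amount to passing between these two families of maps.

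The reverse implication is the routine one. Given maps $\psi:\A\to\B$ and $\varphi:\B\to\A$ as in the statement for every pair $(\alg{F},\varepsilon)$, the family $(\psi,\varphi)$ is precisely a completely positive factorization of $\mathrm{id}_\A$ through finite-dimensional algebras, so $\A$ is nuclear. For quasi-diagonality I would note that $\psi$ is approximately multiplicative by (1), and approximately isometric on $\alg{F}$: since $\varphi,\psi$ are contractions, $\|a\|\geq\|\psi(a)\|\geq\|\varphi\circ\psi(a)\|\geq\|a\|-\varepsilon$ for $a\in\alg{F}$ by (2). Embedding $\B$ into a matrix algebra and invoking Voiculescu's theorem then gives quasi-diagonality.

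The forward implication is where the work lies. First I would fix $(\alg{F},\varepsilon)$ and, using Voiculescu, produce a c.c.p. map $\psi:\A\to\B$ with $\B$ a matrix algebra that is approximately multiplicative and approximately isometric on a sufficiently large finite set containing $\alg{F}$; this already supplies the map $\psi$ together with condition (1). The remaining task is to build a c.c.p. map $\varphi:\B\to\A$ with $\varphi\circ\psi\approx\mathrm{id}$ on $\alg{F}$. Since $\psi$ is approximately a complete order embedding on the operator system spanned by $\alg{F}\cup\{\unit_\A\}$, its partial inverse there is approximately c.c.p., and I would extend that partial inverse to all of $\B$ by Arveson's extension theorem, obtaining a c.c.p. map into $B(\Hilbert)$ for a fixed representation $\A\subseteq B(\Hilbert)$. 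Finally I would invoke nuclearity, in the form of the completely positive approximation maps for $\mathrm{id}_\A$, to correct this extension so that its range falls approximately back inside $\A$, yielding $\varphi$.

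The hard part will be exactly this last correction. Arveson's theorem extends into an injective algebra such as $B(\Hilbert)$, not into $\A$ itself, and $\A$ is in general not injective; it is precisely here that \emph{nuclearity}, rather than mere quasi-diagonality, is indispensable, since it is what allows the extended approximate inverse to be pushed into $\A$ while preserving both complete positivity and the estimate $\varphi\circ\psi\approx\mathrm{id}$. Keeping the various tolerances and the successive enlargements of $\alg{F}$ mutually coherent across the Voiculescu step, the Arveson extension, and the nuclear correction is the delicate bookkeeping on which the whole argument turns.
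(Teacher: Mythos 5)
This statement is quoted in the paper as an external result (Theorem 5.2.2 of \cite{Blackadar97}); the paper offers no proof of it, so there is no internal argument to compare yours against. Judged on its own terms, your reverse implication is correct and complete in outline: the pairs $(\psi,\varphi)$ witness the completely positive approximation property, hence nuclearity, and the estimate $\|a\|\geq\|\psi(a)\|\geq\|\varphi\circ\psi(a)\|\geq\|a\|-\varepsilon$ together with condition (1) and an embedding of $\B$ into a full matrix algebra is exactly Voiculescu's abstract criterion for quasi-diagonality (this is also the content of Remark (\ref{useful-rmk}) in the paper).

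The forward implication, however, has a genuine gap at the step ``the partial inverse is approximately c.c.p.; extend by Arveson.'' Arveson's extension theorem applies to an honest completely positive map on an operator system, not to an approximately completely positive one, and Voiculescu's theorem hands you approximate isometry on a finite set, not an approximate complete order embedding; neither defect is cosmetic. The standard repair is to assemble the maps $\psi_n$ into a single $*$-monomorphism $\A\rightarrow\prod_n M_{k_n}\big/\bigoplus_n M_{k_n}$ (injectivity coming from asymptotic isometry, complete isometry then being automatic for an injective $*$-homomorphism), invert it exactly on its image, extend by Arveson into $B(\Hilbert)$ or into the injective algebra $\A^{**}$ (injective because $\A$ is nuclear), and correct back into $\A$ with the c.p.a.p.\ maps, much as you describe. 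But one must then descend from the corona algebra back to a single finite stage $M_{k_n}$, and this is not automatic --- the naive restriction to the $n$-th summand is killed by the quotient map since that summand lies in $\bigoplus_n M_{k_n}$ --- so a further lifting or selection argument is required, and keeping its tolerances coherent with the Voiculescu step is precisely the bookkeeping you flag but do not carry out. Your sketch correctly locates where nuclearity versus quasi-diagonality are used, but as written the middle of the forward direction does not yet constitute a proof.
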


\begin{remark}\label{useful-rmk}
With the notations of Theorem (\ref{BK-thm}), if $a\in\alg{F}$ then $\|a\|_\A=\|a-\varphi(\psi(a))\|_\A + \|\varphi(\psi(a))\|_\A \leq \varepsilon + \|\psi(a)\|_\B$, i.e. 
\begin{equation*}
\forall a \in \alg{F}\quad \|a\|_\A\leq\|\psi(a)\|_\B + \varepsilon\text{.}
\end{equation*}

This property, together with the first assertion of Theorem (\ref{BK-thm}), characterizes quasi-diagonal C*-algebras, as proved by D. Voiculescu \cite{Voiculescu91}. The existence of the map $\varphi$, in turn, adds the nuclearity property.
\end{remark}

While Theorem (\ref{BK-thm}) involves completely positive maps, Definition (\ref{pseudo-diagonal-def}) only requires positive maps, as this suffices for our proof of finite approximation in the sense of the dual propinquity. On the other hand, Theorem (\ref{BK-thm}) does not provide unital maps, while Definition (\ref{pseudo-diagonal-def}) requires them, as we will find this useful --- for instance, to map states to states. Nonetheless, unital nuclear quasi-diagonal C*-algebras are unital pseudo-diagonal. To this end, we show that we can, in fact, require the maps in Theorem (\ref{BK-thm}) to be unital (see \cite{smith04} for a similar problem, albeit a different construction which would fail to preserve our almost-multiplicative property). We begin with:

\begin{lemma}\label{unital-correction-lemma}
Let $\A$ be a unital C*-algebra such that, for every nonempty finite subset $\alg{F}$ of $\A$ and for every $\varepsilon > 0$, there exists a finite dimensional C*-algebra $\B_\alpha$ and two positive contractions $\varphi_\varepsilon : \B_\alpha \rightarrow \A$ and $\psi_\varepsilon : \A\rightarrow \B_\alpha$ such that for all $x,y\in\alg{F}$:
\begin{equation}\label{app-eq1}
\|x - \varphi_\varepsilon\circ\psi_\varepsilon(x) \|_\A \leq \varepsilon
\end{equation} 
and
\begin{equation}\label{app-eq2}
\|\psi_\varepsilon(x) \psi_\varepsilon(y) - \psi_\varepsilon(x y)\|_{\B_\alpha} \leq \varepsilon\text{.}
\end{equation}

Then for all $\varepsilon > 0$ and for all finite subset $\alg{F}\subseteq \A$, there exists a finite dimensional C*-algebra $\D_\varepsilon$  with unit $\unit'_\varepsilon$ and two positive contractive map $\varsigma_\varepsilon : \A \rightarrow \D_\varepsilon$ and $\vartheta_\varepsilon : \D_\varepsilon \rightarrow \A$ such that, for all $x,y \in \alg{F}$:
\begin{equation*}
\|x - \vartheta_\varepsilon\circ\varsigma_\varepsilon(x)\|_\A \leq  \varepsilon
\end{equation*}
and
\begin{equation*}
\|\varsigma_\varepsilon(x)\varsigma_\varepsilon(y) - \varsigma_\varepsilon(xy)\|_{\D_\varepsilon} \leq  \varepsilon
\end{equation*}
while:
\begin{equation*}
\|\unit'_\varepsilon - \varsigma_\varepsilon(\unit_\A)\|_{\D_\varepsilon} \leq \varepsilon\text{.}
\end{equation*}
\end{lemma}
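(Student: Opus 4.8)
The plan is to manufacture a genuine unit out of the almost-idempotent element $\psi(\unit_\A)$ and then compress everything to the corner it cuts out. First I would apply the hypothesis not to $\alg{F}$ and $\varepsilon$ directly, but to the enlarged finite set $\alg{F}' = \alg{F}\cup\{\unit_\A\}$ and to an auxiliary tolerance $\eta>0$, to be fixed at the very end in terms of $\varepsilon$ and of $M = 1 + \max\{\|x\|_\A : x\in\alg{F}\}$. This yields a finite dimensional C*-algebra $\B$ together with positive contractions $\varphi_\eta : \B\to\A$ and $\psi_\eta:\A\to\B$ satisfying the two inequalities of the hypothesis on $\alg{F}'$. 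Set $p = \psi_\eta(\unit_\A)$. Since $\psi_\eta$ is positive and contractive, $p$ is a positive element of $\B$ with $\sigma(p)\subseteq[0,1]$, and taking $x=y=\unit_\A$ in the almost-multiplicativity estimate gives $\|p^2-p\|_\B\leq\eta$.

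The key step is to replace $p$ by an actual projection. Provided $\eta<\tfrac14$, the bound $\|p^2-p\|_\B\leq\eta$ forces $t(1-t)\leq\eta$ for every $t\in\sigma(p)\subseteq[0,1]$, so $\sigma(p)$ avoids a neighbourhood of $\tfrac12$; consequently the characteristic function $\chi_{[1/2,\infty)}$ is continuous on $\sigma(p)$ and $q=\chi_{[1/2,\infty)}(p)\in C^\ast(p)\subseteq\B$ is a projection, and a direct spectral estimate gives $\|p-q\|_\B\leq 2\eta$. I would then take $\D_\varepsilon = q\B q$, a finite dimensional C*-algebra with unit $\unit'_\varepsilon = q$, and define $\varsigma_\varepsilon(x) = q\,\psi_\eta(x)\,q$ together with $\vartheta_\varepsilon = \varphi_\eta|_{q\B q}$. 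Both maps are compositions or restrictions of positive contractions with the compression $b\mapsto qbq$, hence are themselves positive contractions.

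Everything then reduces to bookkeeping of errors, all controlled by $\eta$ and $M$. The almost-unital property is immediate, since $\|\unit'_\varepsilon - \varsigma_\varepsilon(\unit_\A)\|_\B = \|q - qpq\|_\B = \|q(q-p)q\|_\B\leq\|p-q\|_\B\leq 2\eta$. For almost-multiplicativity I would telescope $\varsigma_\varepsilon(x)\varsigma_\varepsilon(y)=q\,\psi_\eta(x)\,q\,\psi_\eta(y)\,q$ to $\varsigma_\varepsilon(xy)=q\,\psi_\eta(xy)\,q$ in three moves: replace the inner $q$ by $p$ at cost $\leq 2M^2\eta$ (using $\|q-p\|_\B\leq2\eta$ and $\|\psi_\eta(x)\|_\B\leq\|x\|_\A\leq M$); then use $\psi_\eta(x)p=\psi_\eta(x)\psi_\eta(\unit_\A)\approx\psi_\eta(x)$, which is almost-multiplicativity on the pair $(x,\unit_\A)\in\alg{F}'\times\alg{F}'$, at cost $\leq M\eta$; finally use $\psi_\eta(x)\psi_\eta(y)\approx\psi_\eta(xy)$ at cost $\leq\eta$. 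For the almost-inverse property I would write $\vartheta_\varepsilon(\varsigma_\varepsilon(x)) = \varphi_\eta(q\,\psi_\eta(x)\,q)$ and, since $\varphi_\eta$ is a contraction, reduce to bounding $\|q\,\psi_\eta(x)\,q - \psi_\eta(x)\|_\B$, which I would estimate by passing through $p\,\psi_\eta(x)\,p$ (cost $\leq 4M\eta$) and then using $p=\psi_\eta(\unit_\A)$ with almost-multiplicativity twice (cost $\leq 2\eta$), concluding with $\|x-\varphi_\eta(\psi_\eta(x))\|_\A\leq\eta$. Each of the three final bounds has the form (absolute constant)$\cdot M^2\eta$, so choosing $\eta=\min\{\tfrac18,\ \varepsilon/(CM^2)\}$ for a suitable absolute constant $C$ produces all three required inequalities.

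The main obstacle is the second paragraph: converting the merely-almost-idempotent $\psi(\unit_\A)$ into a true unit without destroying the almost-multiplicativity of $\psi$. The rest is routine $\varepsilon$-management, but one must check that the compression $b\mapsto qbq$ interacts correctly with the almost-multiplicative estimates, which is precisely why one needs $q$ to be close to $p=\psi(\unit_\A)$ and why $\unit_\A$ must be inserted into the finite set to which the hypothesis is applied.
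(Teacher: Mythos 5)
Your proposal is correct and follows essentially the same route as the paper: enlarge the finite set by $\unit_\A$, observe that $p=\psi(\unit_\A)$ is almost idempotent so its spectrum splits away from $\tfrac12$, pass to the nearby spectral projection $q$, compress by $q$, and do the same error bookkeeping. The only cosmetic difference is that you extract $q$ directly by continuous functional calculus, whereas the paper first conjugates by a unitary to diagonalize $p$ before taking the spectral projection; the resulting projection and estimates are the same up to constants.
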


\begin{proof}
Let $\alg{F}$ be finite subset of $\A$. Let $\alg{F}_1 = \alg{F}\cup\{\unit_\A\}$. For all $\varepsilon \in (0,1)$, by assumption, there exist a finite dimensional C*-algebra $\B_\varepsilon$ and two positive contractions $\varphi_\varepsilon : \B_\varepsilon \rightarrow \A$ and $\psi_\varepsilon : \A\rightarrow\B_\varepsilon$ such that, for all $a,b \in \alg{F}_1$:
\begin{enumerate}
\item $\|\psi_\varepsilon(ab) - \psi_\varepsilon(a)\psi_\varepsilon(b)\|_{\B_\varepsilon} \leq \varepsilon - \varepsilon^2$,
\item $\|a-\varphi_\varepsilon\circ\psi_\varepsilon(a)\|_\A \leq\varepsilon - \varepsilon^2$.
\end{enumerate}

We shall henceforth tacitly identify $\B_\varepsilon$ with some subalgebra of a full matrix algebra $\alg{M}_\varepsilon$.

Let $\varepsilon \in \left(0, \frac{1}{4}\right)$ and let:
\begin{equation*}
\epsilon = \frac{\varepsilon}{3 + 2 \max\{\|x\|_\A\|y\|_\A : x,y \in\alg{F}_1 \} }\text{.}
\end{equation*}
By construction, $\epsilon \in \left(0,\frac{1}{4}\right)$.

Our first observation is that $\psi_\epsilon(\unit_\A)$ is a positive operator of norm at most $1$ since $\psi_\epsilon$ is a positive contraction. Thus there exists a unitary $u$ in $\alg{M}_\epsilon$ such that $u\psi_\epsilon(\unit_\A)u^\ast$ is a positive diagonal matrix less than the identity. Now, if we replace $\psi_\epsilon$ with $u \psi_\epsilon(\cdot) u^\ast$ and $\varphi_\epsilon$ with $\varphi_\epsilon(u^\ast \cdot u)$, then we obtain a pair of maps which also satisfy Assertions (\ref{app-eq1}) and (\ref{app-eq2}) (up to replacing $\B_\epsilon$ with $u \B_\epsilon u^\ast$). We shall henceforth assume we have made this change.

Thus, $\psi_\epsilon(\unit_\A)$ is a diagonal matrix which we denote by $D$ such that $0\leq D\leq 1$. Now, by construction, $\|\psi_\epsilon(\unit_\A) - \psi_\epsilon(\unit_\A)^2\|_{\B_\epsilon} \leq \epsilon - \epsilon^2 < \frac{1}{4}$. Hence we conclude that the spectrum $\sigma(D)$ of $D$ is a compact subset of $[0,\epsilon]\cup[1-\epsilon,1]$; moreover $[0,\epsilon]\cap[1-\epsilon,1] = \emptyset$. 

Let $P$ be the projection on the sum of the spectral subspaces of $D$ associated with the eigenvalues in $[1-\epsilon,1]$. Thus $1-P$ is the projection on the sum of the spectral subspaces of $D$ associated with eigenvalues in $[0,\epsilon]$.

Let now $x\in\alg{F}_1$. Then:
\begin{equation*}
\begin{split}
\|\psi_\epsilon(x) &- D\psi_\epsilon(x)D\|_{\B_\epsilon} \\
&\leq \|\psi_\epsilon(x)-\psi_\epsilon(x)D\|_{\B_\epsilon} + \|\psi_\epsilon(x)D - D\psi_\epsilon(x)D\|_{\B_\epsilon}\\
&= \|\psi_\epsilon(x\unit_\A) - \psi_\epsilon(x)\psi_\epsilon(\unit_\A)\|_{\B_\epsilon} + \|\left(\psi_\epsilon(\unit_\A x)-\psi_\epsilon(\unit_\A)\psi_\epsilon(x)\right)D\|_{\B_\epsilon}\\
&\leq \epsilon + \epsilon \|D\|_{\B_\epsilon} \leq 2\epsilon\text{.}
\end{split}
\end{equation*}
Moreover:
\begin{equation*}
\begin{split}
\|D\psi_\epsilon(x)D - P\psi_\epsilon(x)P\|_{\B_\epsilon} &= \|(D-P)\psi_\epsilon(x)D\|_{\B_\epsilon} + \|P\psi_\epsilon(x)(D-P)\|_{\B_\epsilon}\\
&\leq 2\|D-P\|_{\B_\epsilon} \|x\|_\A \text{.}
\end{split}
\end{equation*}
Now by construction, $\|D-P\|_{\B_\epsilon} \leq\epsilon$. Thus:
\begin{equation}\label{unital-correction-lemma-eq1}
\begin{split}
\|\psi_\epsilon(x) - P\psi_\epsilon(x)P\|_{\B_\epsilon} &\leq \|\psi_\epsilon(x) - D\psi_\epsilon(x)D\|_{\B_\epsilon} + \|D\psi_\epsilon(x)D - P\psi_\epsilon(x)P\|_{\B_\epsilon}\\
&\leq 2\epsilon(1+\|x\|_\A)\text{.}
\end{split}
\end{equation}

The map $\varsigma_\varepsilon = P\psi_\epsilon(\cdot)P$ is a positive contraction by construction. We set $\vartheta_\varepsilon = \varphi_\epsilon$. We now check that our construction leads to the desired conclusion, with $\D_\varepsilon = P\B_\epsilon P$. We rename $P$ as $\unit'_\varepsilon$.
We have:
\begin{equation*}
\|\unit_\varepsilon' - \varsigma_\varepsilon(\unit_\A)\|_{\B_\epsilon} = \|P - PDP\|_{\B_\epsilon} \leq \epsilon \leq \varepsilon  \text{.}
\end{equation*}

Now let $x\in\alg{F}_1$. Using Inequality (\ref{unital-correction-lemma-eq1}):
\begin{equation*}
\begin{split}
\|x-\vartheta_\varepsilon(\varsigma_\varepsilon(x))\|_\A &= \|x-\varphi_\epsilon(\varsigma_\varepsilon(x))\|_\A \\
&= \|x-\varphi_\epsilon(P\psi_\epsilon(x)P)\|_\A\\
&\leq \|x-\varphi_\epsilon(\psi_\epsilon(x))\|_\A + \|\varphi_\epsilon(\psi_\epsilon(x)) - \varphi_\epsilon(P\psi_\epsilon(x)P)\|_\A\\
&\leq \epsilon + \|\psi_\epsilon(x) - P\psi_\epsilon(x)P\|_{\B_\epsilon} \\
&\leq \epsilon(3 + 2\|x\|_\A) \leq \varepsilon \text{.}
\end{split}
\end{equation*}

Last, let $x,y \in \alg{F}_1$. Since $\unit_\A\in\alg{F}_1$ as well, we obtain:
\begin{equation*}
\begin{split}
\|\varsigma_\varepsilon(x)\varsigma_\varepsilon(y) &- \varsigma_\varepsilon(xy)\|_{\B_\epsilon}\\
&= \|P\psi_\epsilon(x)P\psi_\epsilon(y)P - P\psi_\epsilon(xy)P\|_{\B_\epsilon}\\
&\leq \|\psi_\epsilon(x)P\psi_\epsilon(y) - \psi_\epsilon(xy)\|_{\B_\epsilon}\\
&\leq \|\psi_\epsilon(x)P\psi_\epsilon(y) - \psi_\epsilon(x)\psi_\epsilon(y)\|_{\B_\epsilon} + \|\psi_\epsilon(x)\psi_\epsilon(y)-\psi_\epsilon(xy)\|_{\B_\epsilon}  \\
&\leq \|\psi_\epsilon(x)(P-D)\psi_\epsilon(y)\|_{\B_\epsilon} + \|\psi_\epsilon(x)D\psi(y)-\psi_\epsilon(x)\psi_\epsilon(y)\|_{\B_\epsilon} + \epsilon \\
&\leq \epsilon \|x\|_{\A}\|y\|_{\A} + \|\psi_\epsilon(x)\psi_\epsilon(\unit_\A)\psi_\epsilon(y) - \psi_\epsilon(x\unit_\A)\psi_\epsilon(y)\|_{\B_\epsilon} + \epsilon\\
&\leq \epsilon\|x\|_\A\|y\|_\A + \epsilon\|y\|_\A + \epsilon = \epsilon(1+\|y\|_\A+\|x\|_\A\|y\|_\A) \leq \varepsilon \text{.}
\end{split}
\end{equation*}
Note (although not needed for our proof) that by symmetry, grouping $\unit_\A$ with $y$ instead of $x$ above, we get:
\begin{equation*}
\|\varsigma_\varepsilon(x)\varsigma_\varepsilon(y) - \varsigma_\varepsilon(xy)\|_{\B_\epsilon} \leq \epsilon(1+\min\{\|x\|_\A,\|y\|_\A\} + \|x\|_\A\|y\|_\A) \text{.}
\end{equation*}

This concludes our proof.
\end{proof}

\begin{corollary}
A unital nuclear quasi-diagonal C*-algebra $\A$ is a unital pseudo-diagonal C*-algebra.
\end{corollary}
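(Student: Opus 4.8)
The plan is to combine Theorem (\ref{BK-thm}) with Lemma (\ref{unital-correction-lemma}) and then repair the unitality of both maps by conjugating with the (approximate) images of the units, finally converting ordinary approximate multiplicativity into the Jordan and Lie estimates of Definition (\ref{pseudo-diagonal-def}) by averaging.

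First I would fix $\varepsilon > 0$ and a finite subset $\alg{F}$ of $\A$; enlarging $\alg{F}$, I may assume $\unit_\A \in \alg{F}$. Since $\A$ is nuclear quasi-diagonal, Theorem (\ref{BK-thm}) furnishes, for any auxiliary parameter $\delta > 0$, a finite-dimensional C*-algebra and \emph{contractive completely positive} — hence positive contractive — maps satisfying approximate lifting and approximate multiplicativity on $\alg{F}$. These are exactly the hypotheses of Lemma (\ref{unital-correction-lemma}); applying it with parameter $\delta$ yields a finite-dimensional C*-algebra $\D$ with unit $\unit'$ and positive contractions $\varsigma : \A \to \D$ and $\vartheta : \D \to \A$ such that, for all $x,y \in \alg{F}$,
\[
\|x - \vartheta\circ\varsigma(x)\|_\A \leq \delta, \qquad \|\varsigma(x)\varsigma(y) - \varsigma(xy)\|_\D \leq \delta, \qquad \|\unit' - \varsigma(\unit_\A)\|_\D \leq \delta.
\]
The value of $\delta$ is left open and will be chosen at the end, small relative to $\varepsilon$ and to $\max\{\|a\|_\A : a \in \alg{F}\}$.

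Next I would force both maps to be unital by conjugation. Put $S = \varsigma(\unit_\A) \in \D$: it is a positive contraction with $\|\unit' - S\|_\D \leq \delta < 1$, so $S$ is invertible in $\D$ with $\sigma(S) \subseteq [1-\delta,1]$, whence $\|S^{-1/2} - \unit'\|_\D \leq (1-\delta)^{-1/2} - 1 =: \eta(\delta) \to 0$. Define the positive map $\widetilde{\psi} = S^{-1/2}\varsigma(\cdot)S^{-1/2}$, which satisfies $\widetilde{\psi}(\unit_\A) = \unit'$. Symmetrically, set $h = \vartheta(\unit')$, a positive element of $\A$; since $\unit_\A \in \alg{F}$, combining the approximate lift with $\|\unit' - S\|_\D \leq \delta$ and the contractivity of $\vartheta$ gives $\|\unit_\A - h\|_\A \leq 2\delta < 1$, so $h$ is invertible with $\|h^{-1/2} - \unit_\A\|_\A \leq \eta(2\delta)$. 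Define the positive map $\widetilde{\varphi} = h^{-1/2}\vartheta(\cdot)h^{-1/2}$, so that $\widetilde{\varphi}(\unit') = \unit_\A$. As unital positive maps are automatically contractive, $\widetilde{\psi}$ and $\widetilde{\varphi}$ are the \emph{unital} positive maps required by Definition (\ref{pseudo-diagonal-def}).

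I would then verify that passing from $\varsigma,\vartheta$ to $\widetilde{\psi},\widetilde{\varphi}$ perturbs every quantity only by $O(\delta)$ on $\alg{F}$, since the conjugators differ from the respective units by $\eta(\delta),\eta(2\delta)\to 0$ while $\varsigma,\vartheta$ are contractions and $\alg{F}$ is bounded; for the multiplicative estimate one writes $\widetilde{\psi}(x)\widetilde{\psi}(y)-\widetilde{\psi}(xy)=S^{-1/2}\bigl[\varsigma(x)S^{-1}\varsigma(y)-\varsigma(xy)\bigr]S^{-1/2}$ and uses $\varsigma(x)S^{-1}\varsigma(y)\approx\varsigma(x)\varsigma(y)\approx\varsigma(xy)$ to keep the error $O(\delta)$. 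The Jordan and Lie estimates then follow by averaging: for $a,b\in\alg{F}$,
\[
\widetilde{\psi}\!\left(\Jordan{a}{b}\right) - \Jordan{\widetilde{\psi}(a)}{\widetilde{\psi}(b)} = \tfrac12\bigl[(\widetilde{\psi}(ab)-\widetilde{\psi}(a)\widetilde{\psi}(b)) + (\widetilde{\psi}(ba)-\widetilde{\psi}(b)\widetilde{\psi}(a))\bigr],
\]
and likewise for $\Lie{a}{b}$ with the factor $\tfrac{1}{2i}$, so both are $O(\delta)$. Choosing $\delta$ small enough that every $O(\delta)$ bound is at most $\varepsilon$ exhibits $\D$, $\widetilde{\psi}$ and $\widetilde{\varphi}$ as witnesses of Definition (\ref{pseudo-diagonal-def}), proving that $\A$ is pseudo-diagonal. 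The main obstacle is precisely this unitality repair: one must make both maps \emph{exactly} unital without destroying either approximate property, and conjugation by $S^{-1/2}$ and $h^{-1/2}$ is the natural device — it works exactly because Lemma (\ref{unital-correction-lemma}) already places $\varsigma(\unit_\A)$ within $\delta<1$ of $\unit'$, guaranteeing invertibility in the corner $\D$ and control of $\|S^{-1/2}-\unit'\|$; the remaining bookkeeping of how the two conjugations interact with an approximately multiplicative, approximately unital pair is the only delicate computation, and it is routine once these closeness bounds are in hand.
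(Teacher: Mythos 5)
Your proposal is correct and follows essentially the same route as the paper: invoke Theorem (\ref{BK-thm}) to feed Lemma (\ref{unital-correction-lemma}), then make both maps exactly unital by conjugating with $\varsigma(\unit_\A)^{-1/2}$ and $\vartheta(\unit')^{-1/2}$ (invertibility and the $O(\delta)$ control coming from the spectrum lying in $[1-\delta,1]$), and finally obtain the Jordan and Lie estimates by averaging the two ordered products. The paper's proof performs exactly this conjugation and the same bookkeeping, so no further comment is needed.
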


\begin{proof}
Let $\alg{F}$ be a finite subset of $\A$, which we assume without loss of generality, contains $\unit_\A$. Let $\varepsilon < \frac{1}{4}$. By Lemma (\ref{unital-correction-lemma}), there exist a finite dimensional C*-algebra $\B_\varepsilon$ and two positive contractions $\varphi_\varepsilon : \B_\varepsilon \rightarrow \A$ and $\psi_\varepsilon : \A\rightarrow\B_\varepsilon$ such that:
\begin{enumerate}
\item for all $x\in\alg{F}$ we have $\|x - \varphi_\varepsilon\circ\psi_\varepsilon(x)\|_\A \leq \varepsilon$,
\item for all $x,y \in \alg{F}$ we have $\|\psi_\varepsilon(xy)-\psi_\varepsilon(x)\psi_\varepsilon(y)\|_{\B_\varepsilon} \leq\varepsilon$,
\item we have $\|\unit_\varepsilon - \psi_\varepsilon(\unit_\A)\|_{\B_\varepsilon} \leq \varepsilon$, where $\unit_\varepsilon$ is the unit of $\B_\varepsilon$.
\end{enumerate}

Since $\|\psi_\varepsilon(\unit_\A) - \unit_\varepsilon\|_{\B_\varepsilon} < 1$, the matrix $\psi_\varepsilon(\unit_\A)$ is invertible, while it is also positive, of course.

We also note that:
\begin{equation*}
\begin{split}
\|\unit_\A - \varphi_\varepsilon(\unit_\A)\|_{\A} &= \|\unit_\A - \varphi_\varepsilon\circ\psi_\varepsilon(\unit_\A)\|_{\B_\varepsilon} + \|\varphi_\varepsilon\circ\psi_\varepsilon(\unit_\A) - \varphi_\varepsilon(\unit_\varepsilon)\|_\A\\
&\leq \varepsilon + \|\psi_\varepsilon(\unit_\A) - \unit_\varepsilon\|_{\B_\varepsilon} \\
&\leq 2\varepsilon < 1 \text{.}
\end{split}
\end{equation*}
Thus $\varphi(\unit_\A)$ is also invertible in $\A$ --- and of course also positive.

Define:
\begin{equation*}
\theta_\varepsilon = \varphi_\varepsilon(\unit_\varepsilon)^{-\frac{1}{2}} \varphi_\varepsilon(\cdot) \varphi_\varepsilon(\unit_\varepsilon)^{-\frac{1}{2}} \text{ and }\varsigma_\varepsilon = \psi_\varepsilon(\unit_\A)^{-\frac{1}{2}} \psi_\varepsilon(\cdot) \psi(\unit_\A)^{-\frac{1}{2}}\text{.}
\end{equation*}

We first note that $\theta_\varepsilon$ and $\varsigma_\varepsilon$ are both positive linear maps. Moreover, $\theta_\varepsilon$ and $\varsigma_\varepsilon$ are both unital by construction, so they are contractions as well.

Now, let $a,b \in \alg{F}$. Then:
\begin{equation*}
\begin{split}
\|\varsigma_\varepsilon(ab) &- \varsigma_\varepsilon(a)\varsigma_\varepsilon(b)\|_{\B_\varepsilon} \\
&\leq \|\psi_\varepsilon(\unit_\A)^{-\frac{1}{2}}\|^2_{\B_\varepsilon} \|\psi_\varepsilon(ab) - \psi_\varepsilon(a)\psi_\varepsilon^{-1}(\unit_\A)\psi_\varepsilon(b)\|_{\B_\varepsilon} \\
&\leq \|\psi_\varepsilon(\unit_\A)^{-\frac{1}{2}}\|^2_{\B_\varepsilon} \\
&\left(\| \psi_\varepsilon(a)\left(\unit_\varepsilon - \psi_\varepsilon^{-1}(\unit_\A)\right)\psi_\varepsilon(b)\|_{\B_\varepsilon} + \|\psi_\varepsilon(ab) - \psi_\varepsilon(a)\psi_\varepsilon(b)\|_{\B_\varepsilon}\right)\\
&\leq \|\psi_\varepsilon(\unit_\A)^{-\frac{1}{2}}\|^2_{\B_\varepsilon}\left(\left\|\unit_\varepsilon - \psi_\varepsilon^{-1}(\unit_\A)\right\|_{\B_\varepsilon}\|a\|_\A\|b\|_\A + \|\psi_\varepsilon(ab)-\psi_\varepsilon(a)\psi_\varepsilon(b)\|_{\B_\varepsilon}\right)
\end{split}
\end{equation*}
since $\psi_\varepsilon$ is a contraction. Now, the spectrum of the self-adjoint element $\psi_\varepsilon(\unit_\A)$ is a subset of $[1-\varepsilon,1]$ since $\|\psi_\varepsilon(\unit_\A)-\unit_\varepsilon\|_{\B_\varepsilon} \leq \varepsilon$. Consequently, as $\varepsilon < 1$, by the functional mapping theorem, the spectrum of $\psi_\varepsilon(\unit_\A)^{-1}-\unit_\varepsilon$ is included in $[0,\frac{1}{1-\varepsilon}-1]$. Thus, as $\psi_\varepsilon(\unit_\A)^{-1}-\unit_\varepsilon$ is a self-adjoint element:
\begin{equation*}
\begin{split}
\left\|\psi_\varepsilon(\unit_\A)^{-1} - \unit_\varepsilon \right\|_{\B_\varepsilon} \leq \frac{1}{1-\varepsilon} - 1\text{,}
\end{split}
\end{equation*}
and thus $\lim_{\varepsilon\rightarrow 0} \|\psi_\varepsilon(\unit_\A)^{-1} - \unit_\varepsilon\|_{\B_\varepsilon} = 0$. Similarly, $\lim_{\varepsilon\rightarrow 0} \|\unit_\varepsilon - \psi_\varepsilon(\unit_\A)^{-\frac{1}{2}}\|_{\B_\varepsilon} = 0$, and thus in particular,  $\lim_{\varepsilon\rightarrow 0} \|\psi_\varepsilon(\unit_\A)^{-\frac{1}{2}}\|_{\B_\varepsilon} = 1$.

Since $\lim_{\varepsilon\rightarrow 0}\|\psi_\varepsilon(ab)-\psi_\varepsilon(a)\psi_\varepsilon(b)\|_{\B_\varepsilon} = 0$, we obtain:
\begin{equation*}
\lim_{\varepsilon\rightarrow 0} \|\varsigma_\varepsilon(ab) - \varsigma_\varepsilon(a)\varsigma_\varepsilon(b)\|_{\B_\varepsilon} = 0\text{.}
\end{equation*}
Similarly, for all $a\in\alg{F}$, we have:
\begin{equation*}
\begin{split}
\|a &- \theta_\varepsilon\circ\varsigma_\varepsilon(a)\|_\A \\
&= \|a-\varphi_\varepsilon(\unit_\varepsilon)^{-\frac{1}{2}}\varphi_\varepsilon\left(\psi_\varepsilon(\unit_\A)^{-\frac{1}{2}}\psi_\varepsilon(a)\psi_\varepsilon^{-\frac{1}{2}}(\unit_\A)\right)\varphi_\varepsilon(\unit_\varepsilon)^{-\frac{1}{2}}\|_\A\\
&\leq \|a-\varphi_\varepsilon\circ\psi_\varepsilon(a)\|_\A \\
&\quad + \left\|\varphi_\varepsilon\left(\psi_\varepsilon(\unit_\A)^{-\frac{1}{2}}\psi_\varepsilon(a)\left(\unit_\varepsilon - \psi_\varepsilon(\unit_\A)^{-\frac{1}{2}}\right)\right) \right\|_\A \\
&\quad + \left\|\varphi_\varepsilon\left(\left(\unit_\varepsilon - \psi_\varepsilon(\unit_\A)^{-\frac{1}{2}}\right)\psi_\varepsilon(a)\right)\right\|_\A\\
&\quad + \left\|\left(\unit_\A-\varphi_\varepsilon(\unit_\varepsilon)^{-\frac{1}{2}}\right)\varphi_\varepsilon\left(\psi_\varepsilon(\unit_\A)^{-\frac{1}{2}}\psi_\varepsilon(a)\psi_\varepsilon^{-\frac{1}{2}}(\unit_\A)\right)\right\|_\A\\
&\quad + \left\|\varphi_\varepsilon(\unit_\varepsilon)^{-\frac{1}{2}}\varphi_\varepsilon\left(\psi_\varepsilon(\unit_\A)^{-\frac{1}{2}}\psi_\varepsilon(a)\psi_\varepsilon^{-\frac{1}{2}}(\unit_\A)\right)\left(\unit_\A-\varphi_\varepsilon(\unit_\varepsilon)^{-\frac{1}{2}}\right)\right\|_\A\\
&\leq \|a-\varphi_\varepsilon\circ\psi_\varepsilon(a)\|_\A \\
&\quad + \left\|\unit_\varepsilon - \psi_\varepsilon(\unit_\A)^{-\frac{1}{2}}\right\|_{\B_\varepsilon}\|a\|_\A\left(1 + \|\psi_\varepsilon(\unit_\A)^{-\frac{1}{2}}\|_{\B_\varepsilon}\right)\\
&\quad + \left\|\unit_\A - \varphi_\varepsilon(\unit_\varepsilon)^{-\frac{1}{2}}\right\|_\A \|\psi_\varepsilon(\unit_\A)^{-\frac{1}{2}}\|_{\B_\varepsilon}^2\|a\|_\A\left(1 + \|\varphi_\varepsilon(\unit_\varepsilon)^{-\frac{1}{2}}\|_\A\right)\text{.}
\end{split}
\end{equation*}

Since $\lim_{\varepsilon\rightarrow 0} \|\varphi_\varepsilon(\unit_\varepsilon)-\unit_\A\|_\A = 0$, we check again easily that $\lim_{\varepsilon\rightarrow 0} \|\varphi_\varepsilon(\unit_\varepsilon)^{-\frac{1}{2}}-\unit_\A\|_\A = 0$, and thus, since $\lim_{\varepsilon\rightarrow 0}\|a-\varphi_\varepsilon\circ\psi_\varepsilon(a)\|_\A = 0$, we conclude:
\begin{equation*}
\begin{split}
\lim_{\varepsilon\rightarrow 0}\| a - \theta_\varepsilon\circ\varsigma_\varepsilon(a) \|_\A = 0\text{.}
\end{split}
\end{equation*}

Now, if $c,c'\in\alg{F}$, then we have:
\begin{equation*}
\begin{split}
0 &\leq \|\varsigma_\varepsilon\left(\Jordan{c}{c'}\right) - \Jordan{\varsigma_\varepsilon(c)}{\varsigma_\varepsilon(c')}\|_{\B_\varepsilon} \\
&\leq \frac{1}{2}\left(\|\varsigma_\varepsilon(c)\varsigma_\varepsilon(c') - \varsigma_\varepsilon(cc')\|_{\B_\varepsilon} + \|\varsigma_\varepsilon(c')\varsigma_\varepsilon(c) - \varsigma_\varepsilon(c'c)\|_{\B_\varepsilon}\right)
\end{split}
\end{equation*}
and thus, for all $c,c'\in\alg{F}$:
\begin{equation*}
\lim_{\varepsilon\rightarrow 0} \|\varsigma_\varepsilon\left(\Jordan{c}{c'}\right) - \Jordan{\varsigma_\varepsilon(c)}{\varsigma_\varepsilon(c')}\|_{\B_\varepsilon} = 0 \text{.}
\end{equation*}

The same holds for the Lie product.

This concludes our proof.
\end{proof}

If $(\A,\Lip)$ is a {\gQqcms}, with $\A$ a pseudo-diagonal C*-algebra, then Definition (\ref{pseudo-diagonal-def}) provides some finite dimensional C*-algebraic approximations. The question, of course, is how to construct Lip-norms on these finite dimensional approximations. The main difficulty is to obtain Lip-norms with the Leibniz property. We are, in fact, only able to construct Lip-norms with the quasi-Leibniz property, albeit arbitrarily close to the Leibniz property.

This construction is given in the following lemma:

\begin{lemma}\label{approx-lemma}
Let $C\geq 1$ and $D \geq 0$. Let $(\A,\Lip_\A)$ be a {\Qqcms{C,D}}, and let $\varepsilon > 0$. Assume that we are given a $\varepsilon^2$-dense finite subset $\alg{F}$ of:
\begin{equation*}
\left\{ a \in \sa{\A} : \Lip_\A(a)\leq 1\text{ and }\mu(a) = 0 \right\}
\end{equation*}
for some $\mu\in\StateSpace(\A)$, as well as a finite dimensional C*-algebra $\B$ and two unital, positive linear maps $\varphi : \B \rightarrow \A$ and $\psi : \A\rightarrow\B$ such that:
\begin{enumerate}
\item for all $a\in\alg{F}$, we have $\|a-\varphi\circ\psi(a)\|_\A \leq \varepsilon^2$,
\item for all $a,b\in\alg{F}$, we have $\|\Jordan{\psi(a)}{\psi(b)} - \psi\left(\Jordan{a}{b}\right)\|_\B \leq \varepsilon^2$,
\item for all $a,b\in\alg{F}$, we have $\|\Lie{\psi(a)}{\psi(b)} - \psi\left(\Lie{a}{b}\right)\|_\B \leq \varepsilon^2$.
\end{enumerate}

We then define:
\begin{equation*}
\alg{L} = \left\{ b \in \sa{\B} : \exists a\in\sa{\A} \quad \Lip_\A(a)\leq 1\text{ and }\|b-\psi(a)\|_{\B} \leq\varepsilon \right\} \text{.}
\end{equation*}
Then $\alg{L}$ is a balanced convex set and the associated Minkowsky functional $\Lip$ is a:
\begin{equation*}
\left[C(1+2\varepsilon), C(2\varepsilon + 10\varepsilon^2+12\varepsilon^3) + D\right]
\end{equation*}
quasi-Leibniz Lip-norm on $\B$. 
\end{lemma}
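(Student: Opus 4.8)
The plan is to check that $\alg{L}$ is closed, balanced and convex, to reduce the quasi-Leibniz claim to a single membership statement via Corollary (\ref{quasi-Leibniz-corollary}), and then to prove that statement by an $\alg{F}$-approximation argument, verifying the Lip-norm axioms at the end. Write $\alg{K}=\{a\in\sa{\A}:\Lip_\A(a)\leq 1\}$, $K=\diam{\A}{\Lip_\A}$, and let $U$ be the closed unit ball of $\sa{\B}$. Since $\alg{L}=\psi(\alg{K})+\varepsilon\,U$ and $\psi$ is linear, $\alg{L}$ is the Minkowski sum of two balanced convex sets, hence balanced and convex. For closedness I would use that $\alg{K}=\{a:\mu(a)=0,\ \Lip_\A(a)\leq1\}+\R\unit_\A$, the first summand being \emph{norm-compact} by Theorem (\ref{Rieffel-thm}) (as $\Lip_\A$ is lower semicontinuous): given $b_n\in\alg{L}$ converging to $b$ with witnesses $a_n\in\alg{K}$, split $a_n=a_n^0+\mu(a_n)\unit_\A$, note $\|a_n^0\|_\A\leq K$ so that $\mu(a_n)\unit_\B=\psi(a_n)-\psi(a_n^0)$ is bounded, extract a convergent subsequence of $(a_n^0,\mu(a_n))$, and read off a witness for $b$. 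Once $\alg{L}$ is closed, Corollary (\ref{quasi-Leibniz-corollary}) reduces everything to showing, for all $b,b'\in\alg{L}$ and with $\lambda:=C(1+2\varepsilon)(\|b\|_\B+\|b'\|_\B)+C(2\varepsilon+10\varepsilon^2+12\varepsilon^3)+D$, that $\Jordan{b}{b'},\Lie{b}{b'}\in\lambda\alg{L}$.

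For this membership, pick witnesses $a,a'\in\alg{K}$ with $\|b-\psi(a)\|_\B,\|b'-\psi(a')\|_\B\leq\varepsilon$. Because the hypotheses on $\psi,\varphi$ are available only on $\alg{F}$, I would center and discretize: set $a^0=a-\mu(a)\unit_\A$, choose $\tilde a\in\alg{F}$ with $\|a^0-\tilde a\|_\A\leq\varepsilon^2$, and put $\hat a=\tilde a+\mu(a)\unit_\A$ (and likewise $\hat a'$). Since $\Lip_\A$, $\psi$ and $\varphi$ all ignore scalar multiples of the unit, $\hat a,\hat a'\in\alg{K}$, and two facts make the estimates close up. First, a Voiculescu-type norm bound, using that $\varphi,\psi$ are unital contractions, assertion (1) on $\tilde a\in\alg{F}$, and $\|b-\psi(\hat a)\|_\B\leq\varepsilon+\varepsilon^2$: namely $\|\hat a\|_\A\leq\|\tilde a-\varphi\psi(\tilde a)\|_\A+\|\varphi\psi(\hat a)\|_\A\leq\varepsilon^2+\|\psi(\hat a)\|_\B\leq\|b\|_\B+\varepsilon+2\varepsilon^2$. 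Second, approximate multiplicativity survives the unit shift exactly: $\Jordan{\psi(\hat a)}{\psi(\hat a')}-\psi(\Jordan{\hat a}{\hat a'})=\Jordan{\psi(\tilde a)}{\psi(\tilde a')}-\psi(\Jordan{\tilde a}{\tilde a'})$ has norm $\leq\varepsilon^2$ by assertion (2), the terms carrying $\mu(a)\unit_\B,\mu(a')\unit_\B$ cancelling in the difference by bilinearity and unitality; for the Lie product this is even cleaner, as $\Lie{\hat a}{\hat a'}=\Lie{\tilde a}{\tilde a'}$.

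Two estimates then finish the membership. Setting $\hat\kappa:=C(\|\hat a\|_\A+\|\hat a'\|_\A)+D$, the Voiculescu bound gives $\hat\kappa\leq\lambda$ (the $\varepsilon$-corrections are absorbed using only $C\geq1$, $D\geq0$), and since $\Lip_\A(\hat a),\Lip_\A(\hat a')\leq1$ the $(C,D)$-quasi-Leibniz inequality yields $\Lip_\A(\Jordan{\hat a}{\hat a'})\leq\hat\kappa\leq\lambda$, so $\Lip_\A(\lambda^{-1}\Jordan{\hat a}{\hat a'})\leq1$. It remains to bound $\|\Jordan{b}{b'}-\psi(\Jordan{\hat a}{\hat a'})\|_\B$ by $\lambda\varepsilon$, for then $w:=\lambda^{-1}\Jordan{\hat a}{\hat a'}$ witnesses $\lambda^{-1}\Jordan{b}{b'}\in\alg{L}$. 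With $\beta=b-\psi(\hat a)$, $\beta'=b'-\psi(\hat a')$ (norms $\leq\varepsilon+\varepsilon^2$), the decisive move is to expand $\Jordan{b}{b'}-\Jordan{\psi(\hat a)}{\psi(\hat a')}=\Jordan{\beta}{b'}+\Jordan{\psi(\hat a)}{\beta'}$ as only \emph{two} terms, so as to produce a single $(\varepsilon+\varepsilon^2)^2$ error; together with approximate multiplicativity this gives a bound $(\varepsilon+\varepsilon^2)(\|b\|_\B+\|b'\|_\B)+(\varepsilon+\varepsilon^2)^2+\varepsilon^2$, and a term-by-term comparison shows this is at most $\lambda\varepsilon$, the leading $2\varepsilon^2$ being matched exactly by the $2C\varepsilon^2$ inside the constant part of $\lambda\varepsilon$. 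The Lie case is identical.

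For the Lip-norm axioms, closedness of $\alg{L}$ already gives lower semicontinuity of $\Lip$. The null-space identity holds because $tb\in\alg{L}$ for every $t>0$ forces $\dist(b,\R\unit_\B)\leq(K+\varepsilon)/t\to0$ (the centered part of any witness having norm $\leq K$), whence $b\in\R\unit_\B$, while $\R\unit_\B\subseteq\{\,b:\Lip(b)=0\,\}$ is immediate. Finally, for $b\in\alg{L}$ and states $\varphi_1,\varphi_2$ of $\B$, each $\varphi_i\circ\psi$ is a state of $\A$ and $|\varphi_1(b)-\varphi_2(b)|\leq\Kantorovich{\Lip_\A}(\varphi_1\circ\psi,\varphi_2\circ\psi)+2\varepsilon\leq K+2\varepsilon$, so $\diam{\B}{\Lip}<\infty$ and $\{b\in\alg{L}:\varphi(b)=0\}$ is norm-bounded, hence norm-compact since $\B$ is finite dimensional; Theorem (\ref{Rieffel-thm}) then shows $\Kantorovich{\Lip}$ metrizes the weak* topology, so $(\B,\Lip)$ is a {\gQqcms}. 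I expect the main obstacle to be the error bookkeeping of the membership step together with the unit-shift cancellations: one must route the $\alg{F}$-only hypotheses through the shifted lifts $\hat a,\hat a'$, and pair the perturbation terms tightly enough that the leading quadratic error stays within $2C\varepsilon^2$, which is exactly what fixes the constants appearing in the statement.
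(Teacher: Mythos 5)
Your proposal is correct and follows essentially the same route as the paper: reduce the quasi-Leibniz claim to a unit-ball membership statement via Corollary (\ref{quasi-Leibniz-corollary}), exhibit for each $b,b'\in\alg{L}$ a lift of $\Jordan{b}{b'}$ with controlled Lip-norm using the $\varepsilon^2$-net $\alg{F}$ and the unit-shift invariance of hypotheses (1)--(3), and verify the Lip-norm axioms through Theorem (\ref{Rieffel-thm}). The only differences are in bookkeeping --- you take the shifted discretized elements $\hat a,\hat a'$ as the actual witnesses and expand $\Jordan{b}{b'}-\Jordan{\psi(\hat a)}{\psi(\hat a')}$ bilinearly into two terms, where the paper keeps the original witnesses $a,a'$ and telescopes through six terms, and your null-space argument is quantitative rather than by compactness extraction --- and your error terms land comfortably within the stated constants.
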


\begin{proof}

We split our proof in a few steps for clarity.

\setcounter{step}{0}
\begin{step}
We first check that we may replace $\alg{F}$ with $\alg{F}+\R\unit_\A$ in our lemma, at no cost.
\end{step}

Our first observation is that if $t, u\in\R$ then, and for all $c, c' \in\alg{F}$, we have:
\begin{equation*}
\begin{split}
\|\psi(c+t\unit_\A)\psi(c'+u\unit_\A)&-\psi((c+t\unit_\A)(c'+u\unit_\A))\|_\B \\
&\leq \|\psi(c)\psi(c')-\psi(cc')\|_\B + \|t\psi(c')-t\psi(c')\|_\B \\
&\quad + \|u\psi(c)-u\psi(c)\|_\B + \|(tu-tu)\unit_\A\|_\B\\
&= \|\psi(c)\psi(c')-\psi(cc')\|_\B\text{.}
\end{split}
\end{equation*}
Consequently, since $c,c' \in \sa{\A}$, $t,u \in \R$ and $\psi$ is positive (hence star-preserving), we have:
\begin{equation*}
\|\Jordan{\psi(c+t\unit_\A)}{\psi(c'+u\unit_\A)} - \psi\left(\Jordan{\left(c+t\unit_\A\right)}{\left(c'+u\unit_\A\right)}\right)\|_\B \leq \varepsilon^2
\end{equation*}
and
\begin{equation*}
\|\Lie{\psi(c+t\unit_\A)}{\psi(c'+u\unit_\A)} - \psi\left(\Lie{c+t\unit_\A}{c'+u\unit_\A}\right)\|_\B \leq \varepsilon^2\text{.}
\end{equation*}

We also have:
\begin{equation*}
\|c+t\unit_\A - \varphi\circ\psi(c+t\unit_\A)\|_\A  = \|c-\varphi\circ\psi(c)\|_\A\leq\varepsilon^2\text{.}
\end{equation*}
Hence, we may replace $\alg{F}$ with $\alg{G} = \left(\alg{F}\cup\{0\}\right) + \R\unit_\A$, while Assumptions (1), (2) and (3) of our Lemma remain satisfied.

Now, let $a\in\sa{\A}$ with $\Lip_\A(a)\leq 1$. Then $\Lip_\A(a-\mu(a)\unit_\A) \leq 1$ and $\mu(a-\mu(a)\unit_\A) = 0$. Thus there exists $c\in\alg{F}$ such that $\|(a-\mu(a)\unit_\A) - c\|_\A \leq\varepsilon^2$. Hence, if $d=c+\mu(a)\unit_\A$ then $d\in\alg{G}$ and of course $\|a-d\|_\A\leq\varepsilon^2$.

\begin{step}
With these observations in mind, let us study the geometry of the set $\alg{L}$. We begin by showing that $\alg{L}$ is a balanced, convex set, and that $\Lip$ is finite on $\B$.
\end{step}

If $b,b' \in \alg{L}$ then there exists $a,a'\in \sa{\A}$ with $\|b-\psi(a)\|_\B\leq \varepsilon$ and $\|b'-\psi(a')\|_\B\leq\varepsilon$ while $\Lip_\A(a)\leq 1$ and $\Lip_\A(a')\leq 1$. Thus, for any $t\in[0,1]$, we have $\Lip_\A(ta+(1-t)a)\leq 1$ and $\|(tb+(1-t)b') - (ta+(1-t)a')\|_\B\leq \varepsilon$. Thus $tb+(1-t)b' \in \alg{L}$, i.e $\alg{L}$ is convex. 

Moreover, for all $t\in[-1,1]$, we have $\Lip_\A(ta)\leq |t| \leq  1$ and $\|tb-\psi(ta)\|_\B\leq t\varepsilon \leq\varepsilon$, so $\alg{L}$ is balanced.

Since $\Lip_\A(0) = 0$, we have $0 \in \alg{L}$, and thus by construction of $\alg{L}$, the closed ball of center $0$ and radius $\varepsilon > 0$ lies in $\alg{L}$. This implies in turn that the Minkowsky functional of $\alg{L}$ is finite on $\B$.

\begin{step}
We now check that $\Lip(b) = 0$ if and only if $b$ is a scalar multiple of the unit of $\B$.
\end{step}

 First, let $t \in \R$. Then $\|t\unit_\B - \psi(t\unit_\A)\|_\B = 0$ and $\Lip_\A(t\unit_\A) = 0$ so $t\unit_\B \in \alg{L}$. Thus $\Lip(t \unit_\B) \leq 1$ and thus, as $t\in\R$ is arbitrary, then $\Lip(\unit_\B) = 0$.  

Assume that $\Lip(b) = 0$ for some $b\in \sa{\B}$. Then $\Lip(tb) = 0$ for all $t\in \R$, so $tb \in \alg{L}$ for all $t\in\R$. Thus for all $t\in \R$ there exists $a_t \in \sa{\A}$ such that $\Lip_\A(a_t)\leq 1$ and:
\begin{equation*}
\|tb - \psi(a_t)\|_\B \leq \varepsilon\text{.}
\end{equation*}
Thus, if $t > 0$, then  $\left\|b - \psi\left(\frac{1}{t}a_t\right)\right\|_\B \leq \frac{1}{t}\varepsilon$. Let $a'_t = \frac{1}{t}a_t$ for all $t > 0$. Now $\Lip_\A(a'_t)\leq t^{-1}$.

Now, for all $n\in\N\setminus\{0\}$, there exists $c_n \in \alg{G}$ with $\Lip_\A(c_n)\leq 1$ and $\|a_n-c_n\|_\A \leq \varepsilon^2$. Let $d_n = \frac{1}{n} c_n$ for all $n\in\N, n > 0$. Thus:
\begin{equation*}
\begin{split}
\|b-\psi(d_n)\|_\B &\leq \|b-\psi(a_n')\|_\B+\|\psi(a_n' - d_n)\|_\B\\
&\leq \frac{\varepsilon + \varepsilon^2}{n}.
\end{split}
\end{equation*}
and thus, we have:
\begin{equation*}
\|a_n'\|_\A \leq \|d_n\|_\A + \frac{\varepsilon^2}{n} \leq \|\psi(d_n)\|_\B + 2\frac{\varepsilon^2}{n} \leq \frac{\varepsilon + 3\varepsilon^2}{n} + \|b\|_\B
\end{equation*}
by Remark (\ref{useful-rmk}), while $\Lip(a_n')\leq \frac{1}{n}$. Therefore, the sequence $(a_n')_{n\in\N, n>0}$ lies in the compact set $\{ c \in \sa{\A} : \Lip_\A(c) \leq 1, \|c\|_\A\leq \|b\|_\B+\varepsilon +3\varepsilon^2 \}$ (using Theorem (\ref{Rieffel-thm})). So we can extract a subsequence of $(a_n')_{n\in\N, n>0}$ converging to some $a$. Now by continuity, $\|b-\psi(a)\|_\B=0$ while, by lower semi-continuity, $\Lip_\A(a) = 0$. Thus $a=k\unit_\A$ for some $k\in\R$ since $(\A,\Lip_\A)$ is a Lipschitz pair. Thus as $\psi$ is unital, we conclude that $b = k\unit_\B$.

\begin{step}
We prove that for any $\nu\in\StateSpace(\B)$, we have $\alg{L} = \{b\in\alg{L} :\nu(b) = 0 \} + \R\unit_\B$.
\end{step}

If $b \in \alg{L}$ then there exists $a\in\sa{\A}$ with $\Lip_\A(a)\leq 1$ and $\|\psi(a)-b\|_\B\leq \varepsilon$. Thus for all $t \in \R$, $\Lip_\A(a- t\unit_\A) = \Lip_\A(a) \leq 1$ since $\Lip_\A(\unit_\A) = 0$ and $\Lip_\A$ is a seminorm, and then:
\begin{equation*}
\|(b-t\unit_\B) - \psi(a-t\unit_\A)\|_\B = \|b-\psi(a)\|_\B \leq \varepsilon\text{.}
\end{equation*}
Consequently, $b-t\unit_\B \in \alg{L}$. From this, we conclude that:
\begin{itemize}
\item if $b\in\alg{L}$ then $b - \nu(b) \unit_\B \in \alg{L}$. Since $\nu(b-\nu(b)\unit_\B) = 0$, and $b = (b-\nu(b)\unit_\B) + \nu(b)\unit_\B$, we have $b\in \{ d\in\alg{L} : \nu(d) = 0 \} + \R\unit_\B$.
\item if $b = d  +t\unit_\B$ with $d\in \alg{L}$ and $\nu(d) = 0$, then $b \in \alg{L}$.
\end{itemize}

This proves our step.

\begin{step}
Fix $\nu \in\StateSpace(\B)$. We now prove that $\alg{L}$ is closed and $\{b\in\alg{L}: \nu(b) = 0 \}$ is compact.
\end{step}

Let $(b_n)_{n\in\N}$ be a sequence in $\alg{L}$ with $\nu(b_n) = 0$. Then for all $n\in\N$, let $a_n \in \sa{\A}$ with $\Lip_\A(a_n)\leq 1$ and $\|\psi(a_n)-b_n\|_\B\leq\varepsilon$. Let $\xi = \nu\circ\psi \in \StateSpace(\A)$ (note that $\psi$ is unital, positive, linear). 

Let $n\in\N$. We first note that:
\begin{equation*}
|\xi(a_n)|\leq |\xi(a_n)-\nu(b_n)|+|\nu(b_n)| \leq |\nu(\psi(a_n)-b_n)|\leq \varepsilon\text{.}
\end{equation*} 

Hence, for all $\eta\in\StateSpace(\A)$, we have:
\begin{equation*}
|\eta(a_n)| \leq |\eta(a_n) - \xi(a_n)| + |\xi(a_n)| \leq \Kantorovich{\Lip_\A}(\eta,\xi) + \varepsilon
\end{equation*}
since $\Lip_\A(a_n)\leq 1$; since $a_n\in\sa{\A}$ we conclude:
\begin{equation*}
\|a_n\|_\A \leq \diam{\StateSpace(\A)}{\Kantorovich{\Lip_\A}} + \varepsilon\text{.}
\end{equation*}

We thus observe that for all $n\in\N, n > 0$:
\begin{equation*}
\|b_n\|_\B \leq \|a_n\|_\A + \varepsilon \leq \diam{\StateSpace(\A)}{\Kantorovich{\Lip_\A}} + 2\varepsilon\text{.}
\end{equation*}
As $\B$ is finite dimensional, the bounded sequence $(b_n)_{n\in\N}$ admits a convergent subsequence $(b_{g(n)})_{n\in\N}$ with limit denoted by $b\in\sa{\B}$. By continuity of $\nu$ we have $\nu(b) = 0$.

We note, as a digression, that we also can conclude that $\diam{\StateSpace(\B)}{\Kantorovich{\Lip}}<\infty$, though we will conclude a stronger fact shortly.

Indeed, we have now proven that for all $n\in\N$, we have $\Lip_\A(a_{g(n)})\leq 1$ and $\|a_{g(n)}\|_\A\leq K$ for some fixed $K>0$. The set $\{a\in\sa{\A}:\Lip_\A(a)\leq 1, \|a\|_\A\leq K\}$ is norm compact since $(\A,\Lip_\A)$ is a {\gQqcms} by Theorem (\ref{Rieffel-thm}). Thus, there exists a convergent subsequence $(a_{g\circ f(n)})_{n\in\N}$ of $(a_{g(n)})_{n\in\N}$ which converges in norm to some $a\in \sa{\A}$, with $\Lip_\A(a)\leq 1$, as $\Lip_\A$ is a lower semi-continuous Lip-norm. 

Thus by continuity, $\|b-\psi(a)\|_\B\leq\varepsilon$ and thus $b\in \alg{L}$. Hence, we have shown that any sequence of elements in $\{ b\in\alg{L} : \nu(b) = 0\}$ has a convergent subsequence with limit in the same set. Therefore, $\{b \in \alg{L} : \nu(b) = 0\}$ is norm compact.

Thus, $\alg{L} = \{b\in\alg{L} : \nu(b) = 0 \} + \R\unit_\B$ is closed. Thus $\Lip$ is a lower semi-continuous Lip-norm on $\B$ by Theorem (\ref{Rieffel-thm}).

\begin{step}
It remains to study the quasi-Leibniz property of $\Lip$.
\end{step}

Let $b,b' \in \alg{L}$. There exist $a,a' \in \sa{\A}$ with $\Lip_\A(a)\leq 1$, $\Lip_\A(a')\leq 1$ while $\|\psi(a)-b\|_\B\leq\varepsilon$ and $\|\psi(a')-b'\|_\B \leq\varepsilon$. Then there exists $c,c' \in \alg{G}$ with $\|a-c\|_\A\leq\varepsilon^2$ and $\|a'-c'\|_\A\leq\varepsilon^2$, as explained in Step 1.

Now:
\begin{equation*}
\|\psi(a)\|_\B\leq \|b\|_\B+\varepsilon \text{ and }\|\psi(c)\|_\B \leq \|\psi(c)-\psi(a)\|_\B+\|\psi(a)\|_\B \leq \|b\|_\B+\varepsilon + \varepsilon^2\text{.}
\end{equation*}

Our goal is now to find an upper bound for $\|\Jordan{b}{b'}-\psi\left(\Jordan{a}{a'}\right)\|_\B$. Thus, as $a,a',b,b'$ are all self-adjoint and $\psi$ preserves the star operation:
\begin{align}\label{term0}
\|\Jordan{b}{b'} &- \psi\left(\Jordan{a}{a'}\right)\|_\B \\
&\leq \|\Jordan{b}{b'}-\Jordan{\psi(a)}{\psi(a')}\|_\B+\|\Jordan{\psi(a)}{\psi(a')}-\psi\left(\Jordan{a}{a'}\right)\|_\B\\
&\leq \|b\|_\B \|b'-\psi(a')\|_\B \label{term1} \\ 
&+ \|\psi(a')\|_\B \|b-\psi(a)\|_\B \label{term2} \\
&+ \|\Jordan{\psi(a)}{\psi(a')}-\Jordan{\psi(c)}{\psi(a')}\|_\B \label{term3} \\
&+\|\Jordan{\psi(c)}{\psi(a')}-\Jordan{\psi(c)}{\psi(c')}\|_\B \label{term4} \\
&+ \|\Jordan{\psi(c)}{\psi(c')}-\psi\left(\Jordan{c}{c'}\right)\|_\B \label{term5} \\ 
&+ \|\psi\left(\Jordan{c}{c'}\right)-\psi\left(\Jordan{a}{a'}\right)\|_\B \text{.} \label{term6}
\end{align}

We now provide a bound for each of the Terms (\ref{term1}--\ref{term6}). By construction, $\|b'-\psi(a')\|_\B \leq \varepsilon$, so Term (\ref{term1}) is dominated by $\|b\|_\B\varepsilon$. Note that Term (\ref{term1}) holds because:
\begin{equation*}
\begin{split}
\|\Jordan{b}{b'} - \Jordan{\psi(a)}{\psi(a')}\|_\B &= \frac{1}{2}\left(\|bb' - \psi(a)\psi(a')\|_\B + \|b'b - \psi(a')\psi(a)\|_\B\right) \\
&= \frac{1}{2}\left(\|bb' - \psi(a)\psi(a')\|_\B + \|(b b' - \psi(a)\psi(a'))^\ast\|_\B\right) \\
&= \|bb'-\psi(a)\psi(a')\|_\B\text{.}
\end{split}
\end{equation*}

Similarly, since $\|\psi(a')\|_\B \leq \|b'\|_\B + \varepsilon$ and $\|\psi(a)-b\|_\B\leq\varepsilon$, we conclude that Terms (\ref{term2}) is dominated by $\left(\|b'\|_\B+\varepsilon\right)\varepsilon$.

Now:
\begin{equation*}
\|\Jordan{\psi(a)}{\psi(a')}-\Jordan{\psi(c)}{\psi(a')}\|_\B \leq \|\psi(a)-\psi(c)\|_\B \|\psi(a')\|_\B \leq \varepsilon^2\left(\|b'\|_\B+\varepsilon\right)
\end{equation*}
which gives us our bound for Term (\ref{term3}). 

We derive a similar upper bound on Term (\ref{term4}):
\begin{multline*}
\|\Jordan{\psi(c)}{\psi(a')}-\Jordan{\psi(c)}{\psi(c')}\|_\B \leq \|\psi(a')-\psi(c')\|_\B \|\psi(c)\|_\B \\ \leq \varepsilon^2\left(\|b\|_\B+\varepsilon+\varepsilon^2\right)\text{.}
\end{multline*}
By our choice of $\psi$, Term (\ref{term5}) is bounded above by $\varepsilon^2$. It is the appearance of this term, and the quasi-multiplicative property of $\psi$, which motivates our computation.

Last, concerning of Term (\ref{term6}), we first note that $\|a\|_\A\leq\|c\|_\A + \varepsilon^2$ while $\|c\|_\A \leq \|\psi(c)\|_\B + \varepsilon^2$ by Remark (\ref{useful-rmk}). Thus:
\begin{equation*}
\|a\|_\A \leq \|b\|_\B+\varepsilon+3\varepsilon^2\text{,}
\end{equation*}
while similarly, $\|c'\|_\A \leq \|\psi(c')\|+\varepsilon^2 \leq \|b'\| + \varepsilon + 2\varepsilon^2$. Thus:
\begin{equation*}
\|aa'-cc'\|_\A \leq \|a\|_\A\|a'-c'\|_\A + \|c'\|_\A\|a-c\|_\A \leq \varepsilon^2\left(\|b\|_\B + \|b'\|_\B + 2\varepsilon + 5\varepsilon^2\right)\text{.}
\end{equation*}
Consequently:
\begin{equation*}
\|\psi(\Jordan{a}{a'}-\Jordan{c}{c'})\|_\B \leq \|\Jordan{a}{a'}-\Jordan{c}{c'}\|_\A \leq  \varepsilon^2\left(\|b\|_\B + \|b'\|_\B + 2\varepsilon + 5\varepsilon^2\right)\text{.}
\end{equation*}

We thus obtain the following estimate:
\begin{multline}
\|\Jordan{b}{b'}-\psi(\Jordan{a}{a'})\|_\B \leq \|b\|_\B\varepsilon + \left(\|b'\|_\B+\varepsilon\right)\varepsilon \\
+ \varepsilon^2\left(\|b'\|_\B+\varepsilon\right) + \varepsilon^2\left(\|b\|_\B+\varepsilon+\varepsilon^2\right)\\
+ \varepsilon^2 + \varepsilon^2\left(\|b\|_\B + \|b'\|_\B + 2\varepsilon + 5\varepsilon^2\right)\text{,}
\end{multline}
which simplifies to:
\begin{equation}\label{approx-lemma-eq1}
\|\Jordan{b}{b'}-\psi\left(\Jordan{a}{a'}\right)\|_\B \leq \varepsilon\left( (1+2\varepsilon)\left(\|b\|_\B + \|b'\|_\B\right) + 2\varepsilon + 4\varepsilon^2 + 6\varepsilon^3 \right)\text{.}
\end{equation}

We now compute an upper bound on the Lip-norm of the Jordan product $\Jordan{a}{a'}$, using the $(C,D)$-quasi-Leibniz property of $\Lip_\A$, and using $\|b\|_\B$ and $\|b'\|_\B$:

\begin{equation}\label{approx-lemma-eq3}
\begin{split}
\Lip_\A\left(\Jordan{a}{a'}\right) &\leq C\left(\|a\|_\A + \|a'\|_\A\right) + D \\
&\leq C \left( \|b\|_\B + \|b'\|_\B + 2\varepsilon + 6\varepsilon^2 \right) + D\text{.}
\end{split}
\end{equation}

Let:
\begin{equation*}
\begin{split}
\omega &= C(1+2\varepsilon) \left(\|b\|_\B + \|b'\|_\B + 2\varepsilon + 6\varepsilon^2\right) + D \\
&= C(1+2\varepsilon)\left(\|b\|_\B + \|b'\|_\B\right) + C(2\varepsilon + 10\varepsilon^2 + 12\varepsilon^3) + D \text{.}
\end{split}
\end{equation*}

Trivially, from Expression (\ref{approx-lemma-eq1}), we have:
\begin{equation*}
\|\Jordan{b}{b'}-\psi\left(\Jordan{a}{a'}\right)\|_\B \leq \varepsilon\left( C(1+2\varepsilon)\left(\|b\|_\B + \|b'\|_\B\right) + C(2\varepsilon + 10\varepsilon^2 + 12\varepsilon^3) + D\right) = \varepsilon\omega \text{,}
\end{equation*}
since $C\geq 1$, $D\geq 0$, so:
\begin{equation}\label{approx-lemma-eq2}
\left\|\frac{1}{\omega}\Jordan{b}{b'} - \psi\left(\frac{1}{\omega}\Jordan{a}{a'}\right)\right\|_\B \leq \varepsilon\text{.}
\end{equation}

Similarly, from Expression (\ref{approx-lemma-eq3}), we have:
\begin{equation}\label{approx-lemma-eq4}
\Lip_\A\left(\frac{1}{\omega} \Jordan{a}{a'} \right) \leq 1\text{.}
\end{equation}

\bigskip

Inequalities (\ref{approx-lemma-eq2}) and (\ref{approx-lemma-eq4}) together prove that $\frac{1}{\omega} \Jordan{b}{b'}\in \alg{L}$. Thus, to summarize the heavy computation above, we conclude that:
\begin{equation*}
\Jordan{b}{b'} \in \left[C\left(1+2\varepsilon\right)\left(\|b\|_\B + \|b'\|_\B\right) + \left(C(2\varepsilon + 10\varepsilon^2 + 12\varepsilon^3) + D\right)\right] \alg{L}\text{.}
\end{equation*}

The same computations hold for the Lie product as well.

Thus Lemma (\ref{quasi-Leibniz-lemma}) proves that $\Lip$ satisfies the $\left(C(1+2\varepsilon), C(2\varepsilon + 10\varepsilon^2+12\varepsilon^3) + D\right)$-quasi-Leibniz identity.
\end{proof}

We thus arrive at the following finite dimensional approximation result which applies to a large class of {\gQqcms s}, including in particular nuclear, quasi-diagonal {\Lqcms s}. This result employs the generalized dual propinquity adapted to {\gQqcms s}, as Lemma (\ref{approx-lemma}) only provides us with such finite dimensional approximations, although the departure from the Leibniz property can be controlled to be made arbitrarily small.

\begin{theorem}
Let $C\geq 1$, $D\geq 0$ and Let $(\A,\Lip_\A)$ be a {\Qqcms{(C,D)}} where $\A$ is a unital pseudo-diagonal C*-algebra. Then $(\A,\Lip_\A)$ is the limit of finite dimensional $(C(1+\varepsilon) , D+ \varepsilon)$-{\gQqcms s} for the $(C(1+\varepsilon),D+\varepsilon)$-dual propinquity, for any $\varepsilon > 0$.
\end{theorem}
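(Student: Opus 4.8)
The plan is to fix $\varepsilon>0$, introduce a small auxiliary parameter $\delta>0$, and for each $\delta$ manufacture a single finite dimensional {\Qqcms{(C(1+\varepsilon),D+\varepsilon)}} together with a tunnel of extent $O(\delta)$; letting $\delta\to 0$ with $\varepsilon$ fixed then exhibits $(\A,\Lip_\A)$ as the announced limit. First I would choose $\delta$ small enough that
\[
C(1+2\delta)\leq C(1+\varepsilon)\quad\text{and}\quad C(2\delta+10\delta^2+12\delta^3)+D\leq D+\varepsilon,
\]
so the constants furnished by Lemma (\ref{approx-lemma}) are dominated by $(C(1+\varepsilon),D+\varepsilon)$. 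Fixing any $\mu\in\StateSpace(\A)$, the set $\{a\in\sa{\A}:\Lip_\A(a)\leq 1,\ \mu(a)=0\}$ is norm compact by Theorem (\ref{Rieffel-thm}) (using lower semicontinuity of $\Lip_\A$), hence admits a finite $\delta^2$-dense subset $\alg{F}$. Applying the pseudo-diagonality of $\A$ (Definition (\ref{pseudo-diagonal-def})) to $\alg{F}$ with tolerance $\delta^2$ yields a finite dimensional C*-algebra $\B$ and unital positive linear maps $\varphi:\B\to\A$, $\psi:\A\to\B$ satisfying precisely hypotheses (1)--(3) of Lemma (\ref{approx-lemma}). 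That lemma produces a Lip-norm $\Lip$ on $\B$ which, by our choice of $\delta$, is $F_{C(1+\varepsilon),D+\varepsilon}$-quasi-Leibniz; thus $(\B,\Lip)$ is a finite dimensional {\Qqcms{(C(1+\varepsilon),D+\varepsilon)}}.

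The second step exhibits a tunnel realizing the closeness. I would take $\D=\A\oplus\B$ with canonical surjections $\rho_\A,\rho_\B$, and for $(a,b)\in\sa{\D}$ set
\[
\Lip_\D(a,b)=\max\left\{\Lip_\A(a),\ \Lip(b),\ \tfrac{1}{\delta}\|\psi(a)-b\|_\B\right\}.
\]
The key structural remark is that the unit ball of $\Lip_\D$ equals $\{(a,b):\Lip_\A(a)\leq 1,\ \|\psi(a)-b\|_\B\leq\delta\}$, because $\Lip_\A(a)\leq 1$ and $\|\psi(a)-b\|_\B\leq\delta$ already force $b\in\alg{L}$, i.e. $\Lip(b)\leq 1$. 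Using that $\psi$ is a unital contraction, one checks the two quotient identities of Definition (\ref{tunnel-def}): for $a\in\sa{\A}$ the lift $b=\psi(a)$ gives $\Lip_\D(a,\psi(a))=\Lip_\A(a)$, and for $b\in\alg{L}$ a witness $a$ with $\Lip_\A(a)\leq 1$, $\|b-\psi(a)\|_\B\leq\delta$ gives $\Lip_\D(a,b)=\Lip(b)$ after scaling. Thus $\tau=(\D,\Lip_\D,\rho_\A,\rho_\B)$ is a tunnel, provided $\Lip_\D$ is $F_{C(1+\varepsilon),D+\varepsilon}$-quasi-Leibniz.

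Establishing that last point is the main obstacle, and it is precisely where pseudo-diagonality must be digested. The diagonal components $\Lip_\A(a)$ and $\Lip(b)$ are quasi-Leibniz by construction, so the whole difficulty lies in the bridging term $\tfrac1\delta\|\psi(\Jordan{a}{a'})-\Jordan{b}{b'}\|_\B$. I would bound it by inserting $\Jordan{\psi(a)}{\psi(a')}$: the difference $\|\Jordan{\psi(a)}{\psi(a')}-\Jordan{b}{b'}\|_\B$ is controlled through the ordinary norm Leibniz inequality by $\|\psi(a)-b\|_\B,\|\psi(a')-b'\|_\B\leq\delta$, while the multiplicativity defect $\|\psi(\Jordan{a}{a'})-\Jordan{\psi(a)}{\psi(a')}\|_\B$ — assumed small only on $\alg{F}$ — is transferred to arbitrary unit-ball elements $a,a'$ by approximating them within $\delta^2$ by elements of $\alg{F}+\R\unit_\A$ and re-running the estimate of Step 6 of Lemma (\ref{approx-lemma}). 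Since that defect is $O(\delta^2)$ times a quantity linear in $\|a\|_\A\leq\|(a,b)\|_\D$ and $\|a'\|_\A\leq\|(a',b')\|_\D$, dividing by $\delta$ leaves a term of size $O(\delta)$ times the norms, which for $\delta$ small is absorbed into $C(1+\varepsilon)(\|(a,b)\|_\D+\|(a',b')\|_\D)+(D+\varepsilon)$; the Lie product is identical. Lemma (\ref{quasi-Leibniz-lemma}) (applicable since $F_{C(1+\varepsilon),D+\varepsilon}$ is strongly permissible) then upgrades this unit-ball inclusion to the full quasi-Leibniz inequality, so $\tau$ is a genuine $(C(1+\varepsilon),D+\varepsilon)$-tunnel.

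Finally I would estimate the extent of $\tau$. As $\D=\A\oplus\B$, the bipolar theorem gives $\StateSpace(\D)=\co{\rho_\A^\ast(\StateSpace(\A))\cup\rho_\B^\ast(\StateSpace(\B))}$, so the depth vanishes and the length equals the reach. To bound the reach I transport states through $\varphi$ and $\psi$: for $\omega\in\StateSpace(\A)$ the state $\omega\circ\varphi\in\StateSpace(\B)$ satisfies, on the unit ball, $|\omega(a)-\omega(\varphi(b))|\leq\|a-\varphi(b)\|_\A\leq\|a-\varphi\psi(a)\|_\A+\|\psi(a)-b\|_\B=O(\delta)$ (the first summand being $O(\delta^2)$ after approximating $a-\mu(a)\unit_\A$ by $\alg{F}$, the second at most $\delta$); symmetrically, for $\xi\in\StateSpace(\B)$ the state $\xi\circ\psi$ handles the reverse inclusion via $|\xi(\psi(a))-\xi(b)|\leq\delta$. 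Hence $\tunnelextent{\tau}\leq 2\delta+O(\delta^2)$, whence $\propinquity{C(1+\varepsilon),D+\varepsilon}((\A,\Lip_\A),(\B,\Lip))\leq 2\delta+O(\delta^2)$, where $(\A,\Lip_\A)$ is viewed inside the larger class by monotonicity of the quasi-Leibniz bound. Letting $\delta\to 0$ along a sequence produces finite dimensional {\Qqcms{(C(1+\varepsilon),D+\varepsilon)}s} converging to $(\A,\Lip_\A)$, as desired.
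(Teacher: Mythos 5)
Your proposal is correct and follows essentially the same route as the paper's proof: the same compact unit-ball discretization, the same application of Lemma (\ref{approx-lemma}) via pseudo-diagonality, the same tunnel $(\A\oplus\B,\max\{\Lip_\A,\Lip_\B,\tfrac1\delta\|\psi(\cdot)-\cdot\|_\B\},\rho_\A,\rho_\B)$, and the same reach estimate via $\omega\circ\varphi$ and $\xi\circ\psi$; the only differences are cosmetic (your $\delta$ plays the role of the paper's $\varepsilon$, with the target constants fixed up front rather than at the end, and you upgrade the unit-ball inclusion through Lemma (\ref{quasi-Leibniz-lemma}) where the paper rescales by $t,u$ directly).
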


\begin{proof}
Let $\varepsilon > 0$. Let $\mu\in\StateSpace(\A)$ be arbitrary. The set:
\begin{equation*}
\alg{l} = \left\{ a\in\sa{\A} : \Lip_\A(a)\leq 1\text{ and }\mu(a) = 0 \right\}
\end{equation*}
is norm-compact, since $\Lip_\A$ is a lower semi-continuous Lip-norm, by Theorem (\ref{Rieffel-thm}). Thus there exists a $\varepsilon^2$-dense finite subset $\alg{F}$ of $\alg{l}$.

Since $\A$ is pseudo-diagonal, there exists a finite dimensional C*-algebra $\B$ and two unital positive linear maps $\varphi : \B\rightarrow\A$ and $\psi:\A\rightarrow\B$ such that:
\begin{enumerate}
\item for all $a\in\alg{F}$ we have $\|a-\varphi\circ\psi(a)\|_\A\leq \varepsilon^2$,
\item for all $a,b \in \alg{F}$ we have $\|\Jordan{\psi(a)}{\psi(b)} - \psi\left(\Jordan{a}{b}\right)\|_\B \leq \varepsilon^2$,
\item for all $a,b \in \alg{F}$ we have $\|\Lie{\psi(a)}{\psi(b)} - \psi\left(\Lie{a}{b}\right)\|_\B \leq \varepsilon^2$.
\end{enumerate}

Now, as with step 1 of the proof of Lemma (\ref{approx-lemma}), Assertions (1),(2) and (3) above remain valid if $\alg{F}$ is replaced by $\alg{F}_1 = \alg{F} + \R\unit_\A$. Moreover for any $a\in\sa{\A}$ with $\Lip_\A(a)\leq 1$ then there exists $c\in\alg{F}_1$ such that $\|a-c\|_\A\leq\varepsilon^2$. We shall use this henceforth.

By Lemma (\ref{approx-lemma}), if we set:
\begin{equation*}
\alg{L} = \left\{ b \in \sa{\B} : \exists a \in \sa{\A} \quad \|b-\psi(a)\|_\B \leq \varepsilon\text{ and } \Lip_\A(a)\leq 1 \right\}
\end{equation*}
and if $\Lip_\B$ is the associated Minkowsky functional, then $\Lip$ is a:
\begin{equation*}
\left[C(1+2\varepsilon), C(2\varepsilon + 10\varepsilon^2+12\varepsilon^3)+D\right]
\end{equation*}
quasi-Leibniz Lip-norm on $\B$. We now wish to compute the dual propinquity between $(\A,\Lip_\A)$ and $(\B,\Lip_\B)$.

For all $a\in\sa{\A}$ and $b\in\sa{\B}$ we define:

\begin{equation*}
N(a,b) = \frac{1}{\varepsilon}\|\psi(a)-b\|_\B\text{,}
\end{equation*}
and
\begin{equation*}
\Lip(a,b) = \max\left\{ \Lip_\A(a), \Lip_\B(b), N(a,b)\right\}\text{.}
\end{equation*}

We note that $N$ is norm continuous on $\sa{\A\oplus\B}$, while $N(\unit_\A,\unit_\B) = 0$ yet $N(\unit_\A,0) \not= 0$ since $\psi$ is unital.

Now, let $a\in\sa{\A}$ with $\Lip_\A(a) = 1$. Then since $\|\psi(a)-\psi(a)\|_\B = 0$, we conclude that $\Lip_\B(\psi(a)) \leq 1$ and $N(a,\psi(a)) = 0$. Note that $\Lip(a,\psi(a)) = \Lip_\A(a) = 1$ in this case.

Second, let $b\in\sa{\B}$ with $\Lip_\B(b) = 1$. Then there exists $a\in\sa{\A}$ with $\Lip_\A(a)\leq 1$ and $N(a,b)\leq 1$ by definition of $N$ and $\Lip_\B$. Again note that $\Lip(a,b) = \Lip_\B(b) = 1$.

Consequently, $N$ is a bridge in the sense of \cite{Rieffel00}. We conclude immediately from the above computation that in fact, the quotient of $\Lip$ on $\sa{\A}$ is $\Lip_\A$ and the quotient of $\Lip$ on $\sa{\B}$ is $\Lip_\B$.

However, to conclude that we have constructed a tunnel for the propinquity, we must also study the quasi-Leibniz property for $\Lip$. 

Let $a,a' \in\dom{\Lip_\A}$ and $b,b'\in\dom{\Lip_\B}$. Let $t = \max\{\Lip_\A(a),\Lip_\B(b)\}$ and $u=\max\{\Lip_\A(a'),\Lip_\B(b')\}$.

Assume first $t\leq 1$ and $u\leq 1$. Thus, there exists $c,c' \in \alg{F}_1$ such that $\|a-c\|_\A\leq\varepsilon^2$ and $\|a'-c'\|_\A\leq\varepsilon^2$ again. Thus, similarly to our computation in the proof of Lemma (\ref{approx-lemma}), we have:

\begin{align}
\|bb' &- \psi\left(aa'\right)\|_\B \nonumber \\
&\leq \|b\|_\B \|b'-\psi(a')\|_\B \label{term1a} \\ 
&+ \|\psi(a')\|_\B \|b-\psi(a)\|_\B \label{term2a} \\
&+ \|\psi(a)\psi(a')-\psi(c)\psi(a')\|_\B \label{term3a} \\
&+\|\psi(c)\psi(a')-\psi(c)\psi(c')\|_\B \label{term4a} \\
&+ \|\psi(c)\psi(c')-\psi\left(cc'\right)\|_\B \label{term5a} \\ 
&+ \|\psi\left(cc'\right)-\psi\left(aa'\right)\|_\B \text{.} \label{term6a}
\end{align}

\begin{itemize}
\item Term (\ref{term1a}) is bounded above by $\|b\|_\B (\varepsilon N(a',b'))$. 
\item Term (\ref{term2a}) is bounded above by $\|a'\|_\A (\varepsilon N(a,b))$.
\item Term (\ref{term3a}) is bounded above by $\|a'\|\varepsilon^2$.
\item Term (\ref{term4a}) is bounded above by $\left(\|a\|+\varepsilon^2\right)\varepsilon^2$.
\item Term (\ref{term5a}) is bounded above by $\varepsilon^2$.
\item Term (\ref{term6a}) is bounded above by $\|cc'-aa'\|_\A$ which in turn, is bounded by $\left(\|a\|_\A + \|a'\|_\A + \varepsilon^2\right)\varepsilon^2$.
\end{itemize}

Let us now assume simply that $t, u > 0$. Thus, we get from our estimates above:
\begin{equation*}
\begin{split}
\frac{1}{tu} N\left(aa',bb'\right) &= N\left(\frac{1}{t}a\frac{1}{u}a', \frac{1}{t}b\frac{1}{u}b'\right)\\
&\leq \left\|\frac{1}{u}b'\right\|_\B N\left(\frac{1}{t}a,\frac{1}{t}b\right)  + \left\|\frac{1}{t}a\right\|_\A N\left(\frac{1}{u}a', \frac{1}{u}b'\right) \\
&\quad + 2\left(\left\|\frac{1}{t}a\right\|_\A + \left\|\frac{1}{u}a'\right\|_\A\right)\varepsilon\\
&\quad + \varepsilon + 2\varepsilon^3\text{.}
\end{split}
\end{equation*}

Hence:
\begin{multline*}
N(aa',bb') \leq \|b'\|_\B N(a,b) + \|a\|_\A N(a',b') \\ + 2\|a\|_\A u \varepsilon + 2\|a'\|_\A t \varepsilon + \varepsilon t u + 2 \varepsilon^3 t u\text{.}
\end{multline*}

Since $\|(a,b)\|_{\A\oplus\B}=\max\{\|a\|_\A,\|b\|_\B\}$, $\|(a',b')\|_{\A\oplus\B} =\max\{\|a'\|_\A,\|b'\|_\B\}$ while $\max\{t,N(a,b)\} = \Lip(a,b)$ and $\max\{u, N(a',b')\} = \Lip(a',b')$, we get:
\begin{multline}\label{final-thm-eq1}
N(aa',bb') \leq (\|(a',b')\|_{\A\oplus\B}(1+2\varepsilon)) \Lip(a,b) \\
+ (1+2\varepsilon)\|(a,b)\|_{\A\oplus\B}\Lip(a',b') + \left(\varepsilon + 2\varepsilon^3\right)\Lip(a,b)\Lip(a',b')\text{.}
\end{multline}

If, instead, $t=0$, then $a$ and $b$ are scalar multiple of the identities in $\A$ and $\B$ respectively, and Inequality (\ref{final-thm-eq1}) trivially holds. The same is true if $u = 0$. In conclusion, Inequality (\ref{final-thm-eq1}) holds for all $a,a'\in\sa{\A}$ and $b,b'\in\sa{\B}$.

Since we must use the dual propinquity adapted, in particular, to the $(C(1+2\varepsilon), C(2\varepsilon + 10\varepsilon^2 + 12\varepsilon^3)+D)$-{\gQqcms} $(\B,\Lip_\B)$, we simply note the weaker estimate:

\begin{multline*}
N(aa',bb') \leq C(1+2\varepsilon)\left(\|(a,b)\|_{\A\oplus\B} \Lip(a',b') + \|(a',b')\|_{\A\oplus\B}\Lip(a,b)\right) \\ 
+ \left(C(2\varepsilon + 10\varepsilon^2 + 12\varepsilon^3) + D\right)\Lip(a',b')\Lip(a,b)\text{.}
\end{multline*}

It is then straightforward that for $a,a',b,b'$ are self-adjoint, we have:
\begin{multline*}
\max\left\{N\left(\Jordan{a}{a'},\Jordan{b}{b'}\right), N\left(\Lie{a}{a'},\Lie{b}{b' }\right)\right\} \leq \\
C(1+2\varepsilon)\left(\|(a,b)\|_{\A\oplus\B}\Lip(a',b') + \|(a',b')\|_{\A\oplus\B }\Lip(a,b)\right) \\ 
+ \left(C(2\varepsilon + 10\varepsilon^2 + 12\varepsilon^3) + D\right)\Lip(a',b')\Lip(a,b)\text{.}
\end{multline*}

From this we conclude that, if $\pi_\A:\A\oplus\B \twoheadrightarrow \A$ and $\pi_\B: \A\oplus\B \twoheadrightarrow \B$ are the canonical surjections, then $\tau = (\A\oplus\B, \Lip,\pi_\A,\pi_\B)$ is a $(C(1+2\varepsilon), C(2\varepsilon + 10\varepsilon^2+12\varepsilon^3)+D)$-tunnel in the sense of Definition (\ref{tunnel-def}). 

We can thus compute the length of the tunnel $\tau$, as defined in Definition (\ref{length-def}), to obtain an upper bound for the dual propinquity between $(\A,\Lip_\A)$ and $(\B,\Lip_\B)$.

The depth of $\tau$ is trivially zero, so we simply compute its reach.

If $\theta\in\StateSpace(\B)$ then we set $\varsigma = \theta\circ\psi\in\StateSpace(\A)$ (as $\psi$ is unital, positive linear) and observe that:
\begin{equation*}
\begin{split}
\Kantorovich{\Lip}(\theta,\varsigma) &= \sup\{|\theta(b) - \theta\circ\psi(a)| : \Lip_\A(a)\leq 1, \Lip_\B(b)\leq 1, \|b-\psi(a)\|_\B\leq \varepsilon \}\\
&\leq \varepsilon\text{.}
\end{split}
\end{equation*}

If $\varsigma\in\StateSpace(\A)$, then let $\theta =\varsigma\circ\varphi \in\StateSpace(\B)$ (since $\varphi$ is unital, positive linear). If $N(a,b)\leq 1$ for some $(a,b)\in\sa{\A\oplus\B}$ then $\|b-\psi(a)\|_\B \leq\varepsilon$ and, since $\Lip_\A(a)\leq 1$, there exists $c\in\alg{F}_1$ such that $\|a-c\|_\A\leq\varepsilon^2$, so:
\begin{equation*}
\begin{split}
\|\varphi(b)-a\|_\A &\leq \|\varphi(b)-\varphi\circ\psi(a)\|_\A + \|\varphi\circ\psi(a)-a\|_\A\\
&\leq \|b-\psi(a)\|_\B + \|\varphi\circ\psi(a)-\varphi\circ\psi(c)\|_\A \\
&\quad + \|\varphi\circ\psi(c)-c\|_\A + \|c-a\|_\A\\
&\leq \varepsilon + 3\varepsilon^2\text{.}
\end{split}
\end{equation*}
Consequently, $\Kantorovich{\Lip}(\theta,\varsigma) \leq \varepsilon + 3\varepsilon^2$.

Thus the length of our tunnel is bounded above by $\varepsilon + 3\varepsilon^2$ (and its extent is bounded above by $2(\varepsilon + 3\varepsilon^2)$).

In summary, for all $\varepsilon > 0$, there exists a finite dimensional $(C(1+2\varepsilon), C(2\varepsilon + 10\varepsilon^2+12\varepsilon^3)+D)$-{\gQqcms} $(\B,\Lip_\B)$ such that:
\begin{equation*}
\propinquity{C(1+2\varepsilon),C(2\varepsilon + 10\varepsilon^2 + 12\varepsilon^3)+D}((\A,\Lip_\A),(\B,\Lip_\B)) \leq 2\left(\varepsilon + 3\varepsilon^2\right)\text{.}
\end{equation*}

This proves our theorem. Indeed: fix $\delta > 0$ and let $\varepsilon > 0$ be chosen so that $1+2\varepsilon \leq 1+\delta$ and $C(2\varepsilon + 10\varepsilon^2 + 12\varepsilon^3) \leq \delta$. Then, we have shown that there exists a finite dimensional $(C(1+\delta), D+\delta)$-{\gQqcms} $(\B,\Lip_\B)$ such that $\propinquity{C(1+\delta),D+\delta}((\A,\Lip_\A),(\B,\Lip_\B))\leq 2(\varepsilon + 3\varepsilon^2)$. From this, we conclude that $(\A,\Lip_\A)$ is the limit, for $\propinquity{C(1+\delta,D+\delta)}$, of finite dimensional $(C(1+\delta), D+\delta)$-{\gQqcms s}.
\end{proof}

Now, in particular, we conclude that for any $C>1$ and $D>0$, if $(\A,\Lip_\A)$ is a pseudo-diagonal {\Lqcms}, then $(\A,\Lip_\A)$ is a limit of finite dimensional $(C,D)$-{\gQqcms s} for $\propinquity{C,D}$. Our justification for working with {\gQqcms s} stems from the fact that our proof, in general, does not allow $C=1$ and $D=0$. Of course, \cite{Latremoliere13b}, \cite{Rieffel10c} provide examples of {\Lqcms s} which are limits of finite dimensional {\Lqcms s} for the dual propinquity, so there are known nontrivial examples when this desirable result holds.

\providecommand{\bysame}{\leavevmode\hbox to3em{\hrulefill}\thinspace}
\providecommand{\MR}{\relax\ifhmode\unskip\space\fi MR }
\providecommand{\MRhref}[2]{%
  \href{http://www.ams.org/mathscinet-getitem?mr=#1}{#2}
}
\providecommand{\href}[2]{#2}

\vfill

\end{document}